\documentclass[a4paper]{article}
\usepackage{geometry}
\usepackage[tbtags]{amsmath}
\usepackage{amsthm}
\usepackage{amssymb}
\usepackage{amscd}
\usepackage{cite}
\usepackage{booktabs}
\usepackage[pdftex,colorlinks]{hyperref}
\usepackage{subfigure}
\usepackage{graphicx}
\usepackage{epstopdf}
\usepackage{epsf}
\usepackage{tikz}
\tikzset{>=stealth}
\usepackage{multicol}
\usepackage{multirow}
\usepackage{bm}
\usepackage{caption}

\geometry{a4paper,scale=0.75}
\numberwithin{equation}{section}

\newcommand{\fanshu}[2][0]{\Vert#2\Vert_{#1}}
\newcommand{\banfan}[2][0]{|#2|_{#1}}
\newcommand{\mesh}{\mathcal{T}_h}
\newcommand{\edge}{\mathcal{E}_h}
\newcommand{\face}{\mathcal{F}_h}
\newcommand{\curl}{\nabla\times}

\newcommand{\gcurl}{\nabla\nabla\times}
\newcommand{\divv}{\nabla\cdot}

\newcommand{\jump}[2][]{[\![#2]\!]_{#1}}
\newcommand{\mean}[2][]{\{\!\!\{#2\}\!\!\}_{#1}}


\newtheorem{theorem}{\noindent{\bf Theorem}}[section]

\newtheorem{lemma}{\noindent{Lemma}}[section]

\newtheorem{proposition}{\noindent{\bf Proposition}}[section]
\theoremstyle{remark}
\newtheorem{remark}{\noindent{Remark}}[section]


\begin{document}\large 
\title{{\large  \textbf{Nonconforming finite element approximations and the analysis of Nitsche's method for a singularly perturbed quad-curl problem in three dimensions}}}
\author{\small{Baiju Zhang}\thanks{Beijing Computational Science Research Center, Beijing 100193, China, \tt baijuzhang@csrc.ac.cn} \and \small{Zhimin Zhang}\thanks{Beijing Computational Science Research Center, Beijing 100193, China, {\tt zmzhang@csrc.ac.cn}; Department of Mathematics, Wayne State University, Detroit, MI 48202, USA, \tt ag7761@wayne.edu}}
\date{}\maketitle 

\begin{abstract}
We introduce and analyze a robust nonconforming finite element method for a three dimensional singularly perturbed quad-curl model problem. For the solution of the model problem, we derive proper \textit{a priori} bounds, based on which, we prove that the proposed finite element method is robust with respect to the singular perturbation parameter $\varepsilon$ and the numerical solution is uniformly convergent with order $h^{1/2}$. In addition, we investigate the effect of treating the second boundary condition weakly by Nitsche's method. We show that such a treatment leads to sharper error estimates than imposing the boundary condition strongly when the parameter $\varepsilon< h$. Finally, numerical experiments are provided to illustrate the good performance of the method and confirm our theoretical predictions.
\end{abstract}

\section{Introduction}
Let $\Omega\subset\mathbb{R}^3$ be a contractible Lipschitz domain and $\partial \Omega$ its boundary. We consider the following quad-curl problem
\begin{equation}\label{quadcurlproblem}
\begin{aligned}
\alpha(\nabla\times)^4\pmb{u}+\beta(\nabla\times)^2\pmb{u}+\gamma\pmb{u}&=\pmb{f}\ \text{in }\Omega,\\
\nabla\cdot\pmb{u}&=0\ \text{in }\Omega,\\
\pmb{u}\times\pmb{n}&=0\ \text{on }\partial\Omega,\\
(\nabla\times\pmb{u})\times\pmb{n}&=0\ \text{on }\partial\Omega,
\end{aligned}
\end{equation}
where $\pmb{n}$ is the outward normal unit vector to the boundary $\partial\Omega$, $\pmb{f}\in \pmb{H}(\mathrm{div}^0;\Omega)$ is an external source filed, $\alpha,\ \beta>0$ and $\gamma\ge0$ are parameters.

The above quad-curl problem arises in many areas such as inverse electromagnetic scattering \cite{cakoni2007variational,monk2012finite,sun2011iterative} and magnetohydrodynamics \cite{zheng2011nonconforming}. In recent years, there has been some interest in developing finite element methods (FEMs) for the quad-curl problems. The main difficulty of designing FEMs for the quad-curl problems is the existence of the fourth order operator $(\nabla\times)^4$. To tackle this difficulty, various FEMs have been proposed and analyzed, such as conforming methods \cite{zhang2019Hcurl2,hu2020Simple,zhang2020curlcurl,hu2022afamily}, nonconforming methods \cite{zheng2011nonconforming,huang2022nonconforing,zhang2022anew}, mixed methods \cite{Sun2016MixedFemQuad,Zhang2018MixedQuadCurl,cao2022error}, and discontinuous Galerkin methods \cite{Hong2012DGFourthOrderCurl,Chen2019HDGQuadCurl,Sun2020C0interior}.

The existing literature mainly focuses on the construction of the finite element space and design of the numerical scheme when $\alpha$ and $\beta$ are of the same order, or simply set $\beta=0$. However, according to \cite{zheng2011nonconforming}, $\alpha$ is usually much smaller than either $\beta$ or $\gamma$. This fact brings some challenges in designing robust numerical methods, as studied in the context of fourth order elliptic problems \cite{Wang2006modified,Nilssen2001arobust}. It is well known that the Morley finite element method is not a proper nonconforming method for the following fourth order elliptic singular perturbation problem
\begin{equation}\label{SPbiharmonic}
\begin{aligned}
\varepsilon^2\Delta^2 u-\Delta u&=f,\ \text{in }\Omega,\\
u&=0,\ \text{on }\partial\Omega,\\
\pmb{n}\cdot\nabla u&=0,\ \text{on }\partial\Omega,
\end{aligned}
\end{equation}
where $\varepsilon$ is a real small parameter with $0<\varepsilon\le1$. When $\varepsilon$ tends to zero, the fourth order problem \eqref{SPbiharmonic} formally degenerates to a second order elliptic problem. Since the Morley element is divergent for second order problem\cite{Nilssen2001arobust}, if the Morley element is applied to \eqref{SPbiharmonic}, then the convergence rate of the method will deteriorate when $\varepsilon$ closes to zero. To overcome this difficulty, one can modify the formulation of the Morley method \cite{Wang2006modified} or alert the element itself \cite{Nilssen2001arobust}. It is natural to ask whether the existing nonconforming finite element methods are robust when $\alpha\ll\beta$ for the quad-curl problem \eqref{quadcurlproblem}, and if no, how to construct robust ones. This paper attempts to answer this question. To the best of the authors' knowledge, this is the first paper for the case $\alpha\ll\beta$ for the quad-curl problem \eqref{quadcurlproblem}.

For the convenience of description and analogy with \eqref{SPbiharmonic}, we might as well restrict our attention to the following singularly perturbed quad-curl problem
\begin{equation}\label{SPquadcurlproblem}
\begin{aligned}
\varepsilon^2(\nabla\times)^4\pmb{u}+(\nabla\times)^2\pmb{u}&=\pmb{f}\ \text{in }\Omega,\\
\nabla\cdot\pmb{u}&=0\ \text{in }\Omega,\\
\pmb{u}\times\pmb{n}&=0\ \text{on }\partial\Omega,\\
(\nabla\times\pmb{u})\times\pmb{n}&=0\ \text{on }\partial\Omega,
\end{aligned}
\end{equation}
where $\varepsilon$ is a real small parameter with $0<\varepsilon\le1$. In numerical tests, we observe that when $\varepsilon$ goes to zero, the convergence rate deteriorates if the total nonconforming finite element methods \cite{zheng2011nonconforming,huang2022nonconforing} are applied to the singularly perturbed quad-curl problem \eqref{SPquadcurlproblem}. The main reason for this deterioration is the fact that the nonconforming methods diverge for the Maxwell's equation as mentioned in \cite[Remark 4.10]{huang2022nonconforing}, while the quad-curl problem \eqref{SPquadcurlproblem} formally degenerates to the Maxwell's equation when $\varepsilon$ tends to zero. In the fourth order elliptic context \cite{Nilssen2001arobust}, Nilssen et al. proposed a robust nonconforming $H^2$-element with $H^1$-conformity for \eqref{SPbiharmonic}. This makes us conjecture that if we can construct a nonconforming finite element with $\pmb{H}(\mathrm{curl})$-continuity, we should be able to obtain a robust method for the singularly perturbed quad-curl problem \eqref{SPquadcurlproblem}. Fortunately, we have constructed such a family of nonconforming elements in our previous work \cite{zhang2022anew}. To get robust error estimates, it is still necessary to derive some \textit{a priori} bounds with respect to $\varepsilon$ for the solution of \eqref{SPquadcurlproblem} and investigate the relationship between the solution of \eqref{SPquadcurlproblem} and the following reduced problem
\begin{equation}\label{reducedproblem}
\begin{aligned}
(\nabla\times)^2\overline{\pmb{u}}&=\pmb{f}\ \text{in }\Omega,\\
\nabla\cdot\overline{\pmb{u}}&=0\ \text{in }\Omega,\\
\overline{\pmb{u}}\times\pmb{n}&=0\ \text{on }\partial\Omega.
\end{aligned}
\end{equation}
The corresponding results are given in this paper. As far as we are aware, these results are not available in the existing literature. Based on these results, we prove that the proposed nonconforming method is convergent uniformly in $\varepsilon$ in the energy norm with bounds of order $h^{1/2}$.

Another purpose of this paper is to analyze the effect of enforcing the second boundary condition in \eqref{SPquadcurlproblem} weakly by the Nitsche's method. In \cite{guzman2012spfourth}, Guzm\'{a}n et al. applied the Nitsche's method to the singularly perturbed fourth order problem \eqref{SPbiharmonic}. More precisely, the Neumann boundary condition in \eqref{SPbiharmonic} is imposed weakly in their numerical scheme. They show that such treatment is superior when the parameter $\varepsilon$ is smaller than the mesh size $h$ and obtain sharper error estimates. Inspired by this, we ask whether this method can be extended to the singularly perturbed quad-curl problem \eqref{SPquadcurlproblem}. It turns out that the answer is yes, and by some regularity assumptions on the solution of the reduced problem \eqref{reducedproblem} we obtain sharper error estimates than that imposing boundary conditions strongly when the parameter $\varepsilon< h$.

The rest of this paper is organized as follows: In Section \ref{sectionpreliminary}, we introduce some notations. We also derive some \textit{a priori} estimates with respect to $\varepsilon$ for the solution of \eqref{SPquadcurlproblem} and investigate the relationship between the solution of \eqref{SPquadcurlproblem} and the reduced problem \eqref{reducedproblem}. In Section \ref{sectionFEM}, we introduce a family of nonconforming elements for the quad-curl problem in 3D. We also define the related interpolation and study its approximation properties. In Section \ref{sectionApplication}, we propose and analyze the finite element method with weakly imposed boundary conditions. We prove that the proposed method is robust with respect to the parameter $\varepsilon$, and corresponding error estimates are sharper than that imposing boundary conditions strongly. In Section \ref{sectionnumerical}, we provide some numerical results to verify our theoretical analysis.

Throughout the paper, we use the notations $A\lesssim B$ and $A\gtrsim B$ to represent the inequalities $A\le CB$ and $A\ge CB$, respectively, where $C$ is independent of $h,\varepsilon$ or any penalty parameters.

\section{Preliminaries}\label{sectionpreliminary}
\subsection{Notation}
Let $\Omega\subset\mathbb{R}^3$ be a contractible Lipschitz domain and $\partial \Omega$ its boundary. For a positive integer, we utilize common notation for the Sobolev spaces $H^m(D)$ or $H^m_0(D)$ on a simply connected subdomain $D\subset\Omega$, equipped with the norm $\fanshu[m,D]{\cdot}$ and seminorm $\banfan[m,D]{\cdot}$. If $D=\Omega$, the subscript will be omitted. Conventionally, we write $L^2(D)$ instead of $H^0(D)$.

Let $\pmb{u}=(u_1,u_2,u_3)^T$ and $\pmb{w}=(w_1,w_2,w_3)^T$, where the superscript $T$ denotes the transpose, then we denote
\begin{align*}
\divv\pmb{u}&=\frac{\partial u_1}{\partial x_1}+\frac{\partial u_2}{\partial x_2}+\frac{\partial u_3}{\partial x_3},\\
\pmb{u}\times\pmb{w}&=(u_2w_3-u_3w_2,u_1w_3-u_3w_1,u_1w_2-u_2w_1)^T,\\
\curl\pmb{u}&=(\frac{\partial u_3}{\partial x_2}-\frac{\partial u_2}{\partial x_3},\frac{\partial u_1}{\partial x_3}-\frac{\partial u_3}{\partial x_1},\frac{\partial u_2}{\partial x_1}-\frac{\partial u_1}{\partial x_2})^T,\\
(\curl)^2\pmb{u}&= \curl\curl\pmb{u}.
\end{align*}

On a simply-connected sub-domain $D\subset\Omega$ we write
\begin{align}
L_0^2({D})&=\{q\in L^2({D}): \int_{{D}}qd\pmb{x}=0\},\\
\pmb{H}(\mathrm{div};{D})&=\{\pmb{v}\in \pmb{L}^2({D}): \divv \pmb{v}\in L^2({D})\},\\
\pmb{H}(\mathrm{div}^0;{D})&=\{\pmb{v}\in \pmb{H}(\mathrm{div};{D}):\divv\pmb{v}=0 \text{ in }{D}\},\\
\pmb{H}_0(\mathrm{div};{D})&=\{\pmb{v}\in \pmb{H}(\mathrm{div};{D})): \pmb{v}\cdot\pmb{n}=0\ \text{ on }\partial{D}\},\\
\pmb{H}(\mathrm{curl};{D})&=\{\pmb{v}\in \pmb{L}^2({D}): \curl \pmb{v}\in \pmb{L}^2({D})\},\\
\pmb{H}_0(\mathrm{curl};{D})&=\{\pmb{v}\in \pmb{H}(\mathrm{curl};{D}):\pmb{v}\times\pmb{n}=0 \text{ on }\partial{D}\},\\
\pmb{H}^1(\mathrm{curl};{D})&=\{\pmb{v}\in \pmb{H}^1({D}): \curl\pmb{v}\in \pmb{H}^1({D})\},\\
\pmb{H}^1_0(\mathrm{curl};{D})&=\{\pmb{v}\in \pmb{H}^1(\mathrm{curl};{D})\cap\pmb{H}_0^1({D}):\ \curl\pmb{v}=0 \text{ on }\partial{D}\},\\
\pmb{H}(\mathrm{grad}\ \mathrm{curl};{D})&=\{\pmb{v}\in \pmb{L}^2({D}): \curl\pmb{v}\in\pmb{H}^1({D})\},\\
\pmb{H}_0(\mathrm{grad}\ \mathrm{curl};{D})&=\{\pmb{v}\in\pmb{H}(\mathrm{grad}\ \mathrm{curl};{D}):\pmb{v}\times\pmb{n}=0, \ \curl\pmb{v}=0 \text{ on }\partial{D}\},
\end{align}
where $\pmb{n}$ is the unit outward normal vector to the boundary $\partial D$.

Let $\{\mesh\}_{h>0}$ be a family of quasi-uniform simplicial triangulation of $\Omega$ with $h_K=\mathrm{diam}(K)$ for all $K\in\mesh$ and $h=\max_{K\in\mesh}h_K$. We denote by $\face$ the faces, $\face^i$ the interior faces and $\face^b$ the boundary faces in $\mesh$. Furthermore, we define the patch of $F\in\face$ and $K\in\mesh$ as
\begin{align*}
\mathcal{T}_F=\{K\in\mesh:\ F\subset\partial K\},\qquad \mathcal{T}_K=\{K'\in\mesh:\ \partial K'\cap\partial K\neq\emptyset\}.
\end{align*}

Given $K\in\mesh$ and a face $F\subset\partial K$, we denote by $\lambda_F$ the barycentric coordinate of $K$ which vanishes on $F$. The element bubble and face bubbles are given by
\begin{align}
b_K=\prod_F\lambda_F,\quad b_F=\prod_{G\neq F}\lambda_G,
\end{align}
where the product runs over the faces of $K$.

For a given simplex $S$ in $\mathbb{R}^3$ and $m\ge0$, the vector-valued polynomials are defined as $\pmb{P}_m(S)=[P_m(S)]^3$, where $P_m(S)$ is the space of polynomials defined of $S$ of degree less than or equal to $m$. We also set $\pmb{P}_m(S)$ and $P_m(S)$ to be the empty set if $m<0$.

For interior and boundary inner products we will use the following notation:
\begin{align*}
(\pmb{u},\pmb{v})_K=\int_K\pmb{u}\cdot\pmb{v}dx,\quad \langle p,q\rangle_F=\int_F pqds.
\end{align*}
We denote by $\pmb{n}_F$ the unit outward normal vector to a face $F$ of $K$.

We define the tangential and normal jump operators. If $F\in \face^i$ with $F=K^+\cap K^-$, then we set
\begin{align*}
\jump{\pmb{v}\times\pmb{n}}|_F&=(\pmb{v}|_{K^+}\times\pmb{n}_{K^+})|_F+(\pmb{v}|_{K^-}\times\pmb{n}_{K^-})|_F,\\
\jump{\pmb{v}\cdot\pmb{t}}|_F&=(\pmb{v}|_{K^+}\cdot\pmb{t}_{K^+})|_F+(\pmb{v}|_{K^-}\cdot\pmb{t}_{K^-})|_F,\\
\jump{\pmb{v}\cdot\pmb{n}}|_F&=(\pmb{v}|_{K^+}\cdot\pmb{n}_{K^+})|_F+(\pmb{v}|_{K^-}\cdot\pmb{n}_{K^-})|_F,
\end{align*}
where $\pmb{n}_{K^{\pm}}$ is the unit outward normal to $\partial K^{\pm}$ and $\pmb{t}_{K^{\pm}}$ is the unit outward tangent to $\partial K^{\pm}$.

If $F\in \face^b$ is a boundary face with $F\subset\partial K$, then we set
\begin{align*}
\jump{\pmb{v}\times\pmb{n}}|_F&=(\pmb{v}|_{K}\times\pmb{n}_{K})|_F,\\
\jump{\pmb{v}\cdot\pmb{t}}|_F&=(\pmb{v}|_{K}\cdot\pmb{t}_{K})|_F,\\
\jump{\pmb{v}\cdot\pmb{n}}|_F&=(\pmb{v}|_{K}\cdot\pmb{n}_{K})|_F.
\end{align*}
We also define the average operator as
\begin{align}
\mean{\pmb{v}\times\pmb{n}}|_F&=\frac{1}{2}\left(\pmb{v}|_{K^+}\times\pmb{n}_F|_F+\pmb{v}|_{K^-}\times\pmb{n}_F|_F\right)\quad\text{ if }F\in\face^i\text{ with } F=K^+\cap K^- \\
\mean{\pmb{v}\times\pmb{n}}|_F&=\pmb{v}\times\pmb{n}|_F\quad\text{ if }F\in\face^b
\end{align}

In the next sections, we will often use the following version of the trace inequality (see
\cite[Theorem 1.6.6]{brenner2008fembook} and \cite[Proposition 1]{guzman2012spfourth})
\begin{proposition}
For any simply connected domain $D$ with piece-wise smooth boundary $\partial D$, we have
\begin{align}
\fanshu[0,\partial D]{v}\lesssim\fanshu[0,D]{v}^{1/2}\fanshu[1,D]{v}^{1/2},\ \forall v\in H^1(D).\label{trace1}
\end{align}
In particular, by a standard scaling we get the following estimates on any simplex $K$
\begin{align}
\fanshu[0,\partial K]{v}&\lesssim h_K^{-1/2}\fanshu[0,K]{v}+\fanshu[0,D]{v}^{1/2}\fanshu[0,K]{\nabla v}^{1/2},\ \forall v\in H^1(K),\label{trace2}\\
\fanshu[0,\partial K]{v}&\lesssim h_K^{-1/2}\fanshu[0,K]{v}+h_K^{1/2}\fanshu[0,K]{\nabla v},\ \forall v\in H^1(K).\label{trace3}
\end{align}
In addition we will also need a standard inverse estimates. Let $q$ be fixed. Then for all $v\in P^q(K)$,
\begin{align}
\fanshu[0,\partial K]{v}\lesssim h_K^{-1/2}\fanshu[0,K]{v}.\label{trace4}
\end{align}
\end{proposition}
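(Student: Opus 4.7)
The plan is to establish the multiplicative trace inequality \eqref{trace1} first by an integration-by-parts argument with a carefully chosen vector field, and then obtain \eqref{trace2}, \eqref{trace3}, \eqref{trace4} by affine scaling to a reference simplex.

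For \eqref{trace1}, I would fix a smooth vector field $\pmb{\phi}$ on $\overline D$ satisfying $\pmb{\phi}\cdot\pmb{n} \ge c_0 > 0$ on $\partial D$ (such a field exists on any Lipschitz, indeed piecewise-smooth, domain by patching outward normals with a partition of unity). Then for $v\in H^1(D)$ one has
\begin{align*}
\fanshu[0,\partial D]{v}^2 \;\le\; c_0^{-1}\int_{\partial D} v^2\,\pmb{\phi}\cdot\pmb{n}\,ds \;=\; c_0^{-1}\int_D \nabla\cdot(v^2\pmb{\phi})\,dx \;=\; c_0^{-1}\int_D\bigl(2v\,\nabla v\cdot\pmb{\phi}+v^2\,\divv\pmb{\phi}\bigr)dx.
\end{align*}
A density argument allows one to take $v\in H^1(D)$, and Cauchy--Schwarz together with the boundedness of $\pmb\phi$ and $\divv\pmb\phi$ gives $\fanshu[0,\partial D]{v}^2 \lesssim \fanshu[0,D]{v}\fanshu[0,D]{\nabla v}+\fanshu[0,D]{v}^2 \lesssim \fanshu[0,D]{v}\fanshu[1,D]{v}$, which is \eqref{trace1}.

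Next, for \eqref{trace2}, I would map $K$ to a fixed reference simplex $\hat K$ by an affine diffeomorphism $F_K(\hat x)=B_K\hat x+b_K$ with $|\det B_K|\simeq h_K^3$, $\|B_K\|\simeq h_K$, $\|B_K^{-1}\|\simeq h_K^{-1}$, and each face $F\subset\partial K$ with area $\simeq h_K^2$. Pulling back $v$ to $\hat v = v\circ F_K$, the standard change-of-variable bounds give $\fanshu[0,\partial K]{v}^2\simeq h_K^2\fanshu[0,\partial\hat K]{\hat v}^2$, $\fanshu[0,K]{v}^2\simeq h_K^3\fanshu[0,\hat K]{\hat v}^2$, and $\fanshu[0,K]{\nabla v}^2\simeq h_K\fanshu[0,\hat K]{\hat\nabla\hat v}^2$. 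Applying \eqref{trace1} on $\hat K$ yields $\fanshu[0,\partial\hat K]{\hat v}\lesssim \fanshu[0,\hat K]{\hat v}^{1/2}(\fanshu[0,\hat K]{\hat v}+\fanshu[0,\hat K]{\hat\nabla\hat v})^{1/2}$, and inserting the scaling factors produces precisely \eqref{trace2}. Estimate \eqref{trace3} then follows from \eqref{trace2} by the elementary bound $a^{1/2}b^{1/2}\le \tfrac12(h_K^{-1/2}a+h_K^{1/2}b)$ applied to $a=\fanshu[0,K]{v}$, $b=\fanshu[0,K]{\nabla v}$.

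Finally, for the inverse trace estimate \eqref{trace4}, I would again pull back to $\hat K$ and use the equivalence of the norms $\fanshu[0,\partial\hat K]{\cdot}$ and $\fanshu[0,\hat K]{\cdot}$ on the finite-dimensional space $P^q(\hat K)$, so that $\fanshu[0,\partial\hat K]{\hat v}\lesssim \fanshu[0,\hat K]{\hat v}$ with a constant depending only on $q$ and $\hat K$; rescaling gives \eqref{trace4}. The only subtle step is the construction of the field $\pmb\phi$ with $\pmb\phi\cdot\pmb n\ge c_0$ in the proof of \eqref{trace1}; on a Lipschitz domain this uses a partition of unity and local rectifications of the boundary, but on a simplex the outward normal on each face already gives such a $\pmb\phi$ directly by a short affine computation, so no genuine obstacle arises. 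All remaining steps are bookkeeping of scaling constants.
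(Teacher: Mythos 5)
Your proof is correct. The paper itself offers no proof of this proposition — it simply cites \cite[Theorem 1.6.6]{brenner2008fembook} and \cite[Proposition 1]{guzman2012spfourth} — and your argument (divergence theorem with a vector field satisfying $\pmb{\phi}\cdot\pmb{n}\ge c_0>0$, followed by affine scaling to the reference simplex and finite-dimensional norm equivalence for the inverse estimate) is precisely the standard proof given in those references, so there is nothing to compare beyond noting that you have filled in what the paper delegates to the literature.
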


\subsection{Regularity estimates of the singularly perturbed quad-curl problem}
This subsection is devoted to investigate the following regularity estimates of the singularly perturbed quad-curl problem \eqref{SPquadcurlproblem}.
\begin{lemma}
Assume domain $\Omega$ is convex. Let $\pmb{u}$ be the solution to \eqref{SPquadcurlproblem}, and $\overline{\pmb{u}}$ the solution to the reduced problem \eqref{reducedproblem}. Then the following estimates hold
\begin{align}
\fanshu[2]{\pmb{u}}+\varepsilon\fanshu[2]{\curl\pmb{u}}&\lesssim\varepsilon^{-1/2}\fanshu[0]{\pmb{f}},\label{regularitySP1}\\
\fanshu[0]{\curl(\pmb{u}-\overline{\pmb{u}})}&\lesssim\varepsilon^{1/2}\fanshu[0]{\pmb{f}},\label{regularitySP2}\\
\fanshu[0]{\pmb{u}-\overline{\pmb{u}}}&\lesssim\varepsilon\fanshu[0]{\pmb{f}},\label{regularitySP3}\\
\fanshu[1]{\pmb{u}-\overline{\pmb{u}}}&\lesssim\varepsilon^{1/2}\fanshu[0]{\pmb{f}}.\label{regularitySP4}
\end{align}
\end{lemma}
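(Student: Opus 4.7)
The plan is to first derive the basic energy estimate from the variational formulation, then invoke the convex-domain shift theorem for $\pmb H_0(\mathrm{curl};\Omega)\cap\pmb H(\mathrm{div}^0;\Omega)$ to obtain $\pmb H^1$-regularity, and finally compare $\pmb u$ with the reduced solution $\overline{\pmb u}$ to both quantify the boundary layer and sharpen the $\pmb H^2$ bound in \eqref{regularitySP1}.

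For the baseline, I would write \eqref{SPquadcurlproblem} variationally on the space $V:=\{\pmb v\in\pmb H_0(\mathrm{curl};\Omega)\cap\pmb H(\mathrm{div}^0;\Omega):\ \curl\pmb v\in\pmb H_0(\mathrm{curl};\Omega)\}$ as
\begin{align*}
\varepsilon^2(\curlc\pmb u,\curlc\pmb v)+(\curl\pmb u,\curl\pmb v)=(\pmb f,\pmb v)\quad\forall\,\pmb v\in V,
\end{align*}
and test with $\pmb v=\pmb u$. The Poincar\'e inequality $\fanshu{\pmb u}\lesssim\fanshu{\curl\pmb u}$, valid on a contractible domain for divergence-free fields with vanishing tangential trace, yields $\fanshu{\curl\pmb u}+\varepsilon\fanshu{\curlc\pmb u}\lesssim\fanshu{\pmb f}$. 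Applying the convex-domain shift $\fanshu[1]{\pmb w}\lesssim\fanshu{\curl\pmb w}$ to $\pmb u$ and to $\curl\pmb u$ (which is automatically divergence-free and has vanishing tangential trace by the second boundary condition) then upgrades this to $\fanshu[1]{\pmb u}+\fanshu[1]{\curl\pmb u}\lesssim\varepsilon^{-1}\fanshu{\pmb f}$.

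To establish the comparison estimates \eqref{regularitySP2}--\eqref{regularitySP4}, I would subtract the two weak forms, use $\curlc\overline{\pmb u}=\pmb f$, and integrate by parts twice to derive an identity of the shape
\begin{align*}
\varepsilon^2(\curlc(\pmb u-\overline{\pmb u}),\curlc\pmb v)+(\curl(\pmb u-\overline{\pmb u}),\curl\pmb v)=-\varepsilon^2(\pmb f,\curlc\pmb v)+\int_{\partial\Omega}(\curl\overline{\pmb u}\times\pmb n)\cdot\curl\pmb v\,ds
\end{align*}
for admissible $\pmb v$; the surface integral encodes the mismatch between the second boundary condition and $\overline{\pmb u}$ and is precisely what drives the boundary layer. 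Because $\Omega$ is convex, $\overline{\pmb u}\in\pmb H^2(\Omega)$ with $\fanshu[2]{\overline{\pmb u}}\lesssim\fanshu{\pmb f}$ by elliptic regularity of \eqref{reducedproblem}, so $\curl\overline{\pmb u}\times\pmb n$ is controlled in $\pmb H^{1/2}(\partial\Omega)$ by $\fanshu{\pmb f}$. Testing with $\pmb v=\pmb u-\overline{\pmb u}$ after a standard lifting of its boundary datum, and bounding the surface term by a trace inequality plus Young, yields \eqref{regularitySP2}; then \eqref{regularitySP4} follows from the convex-domain shift applied to the divergence-free error $\pmb u-\overline{\pmb u}\in\pmb H_0(\mathrm{curl};\Omega)$, and \eqref{regularitySP3} by an Aubin--Nitsche duality against an auxiliary divergence-free Maxwell problem. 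To finally upgrade the naive $\varepsilon^{-1}$ bound from Step~1 into the $\varepsilon^{-1/2}$ rate of \eqref{regularitySP1}, I would decompose $\pmb u=\overline{\pmb u}+(\pmb u-\overline{\pmb u})$, use $\fanshu[2]{\overline{\pmb u}}\lesssim\fanshu{\pmb f}$, and invoke an $\pmb H^2$-shift on the boundary-layer correction that combines \eqref{regularitySP4} with $\varepsilon\fanshu{\curlc\pmb u}\lesssim\fanshu{\pmb f}$; the bound on $\varepsilon\fanshu[2]{\curl\pmb u}$ is obtained by applying the same decomposition to $\curl\pmb u$.

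I expect the main obstacle to be the passage from the energy identity to \eqref{regularitySP2}: because $\overline{\pmb u}$ fails the second boundary condition, $\pmb u-\overline{\pmb u}$ lies outside $V$, and the surface integral above must be tracked carefully --- using $\overline{\pmb u}\in\pmb H^2(\Omega)$ and an $\varepsilon$-weighted trace inequality --- to extract the fractional power $\varepsilon^{1/2}$ rather than a trivial $O(1)$ bound. Once this step is done sharply, the remaining estimates follow by standard duality and shift arguments.
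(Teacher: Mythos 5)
Your overall architecture matches the paper's: compare $\pmb u$ with $\overline{\pmb u}$ through an energy identity whose boundary term records the failure of $\overline{\pmb u}$ to satisfy the second boundary condition, then get \eqref{regularitySP3} by duality and \eqref{regularitySP4} from the div--curl embedding. But there is a genuine gap at the one step you yourself flag as the main obstacle, and it starts with the identity itself. Testing the difference equation $\varepsilon^2(\curl)^4\pmb u+(\curl)^2(\pmb u-\overline{\pmb u})=0$ against $\pmb v$ with $\pmb v\times\pmb n=0$ produces the boundary term $-\varepsilon^2\langle\pmb n\times(\curl)^2\pmb u,\curl\pmb v\rangle$, which for $\pmb v=\pmb u-\overline{\pmb u}$ becomes $\varepsilon^2\langle(\curl)^2\pmb u,\pmb n\times\curl\overline{\pmb u}\rangle$ --- i.e.\ the trace of the \emph{third}-order quantity $(\curl)^2\pmb u$ paired against $\curl\overline{\pmb u}$, weighted by $\varepsilon^2$. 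Your version, $\int_{\partial\Omega}(\curl\overline{\pmb u}\times\pmb n)\cdot\curl\pmb v\,ds$ with no $\varepsilon^2$, is not what integration by parts gives; taken at face value with $\pmb v=\pmb u-\overline{\pmb u}$ it reduces (since $\curl\pmb u\times\pmb n=0$) to $-\fanshu[0,\partial\Omega]{\curl\overline{\pmb u}}^2=O(\fanshu[0]{\pmb f}^2)$, which can never yield the factor $\varepsilon$ in \eqref{regularitySP2}.

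With the correct boundary term, the difficulty is controlling $\fanshu[0,\partial\Omega]{(\curl)^2\pmb u}\lesssim\fanshu[0]{(\curl)^2\pmb u}^{1/2}\fanshu[1]{(\curl)^2\pmb u}^{1/2}$: a trace inequality plus Young is not enough, because one still needs an a priori bound on $\fanshu[1]{(\curl)^2\pmb u}$ that can be absorbed into the left-hand side. The missing ingredient is the elliptic shift estimate $\fanshu[2]{(\curl)^2\pmb u}\lesssim\varepsilon^{-2}\fanshu[-1]{(\curl)^2(\overline{\pmb u}-\pmb u)}\lesssim\varepsilon^{-2}\fanshu[0]{\curl(\pmb u-\overline{\pmb u})}$ applied to $(\curl)^4\pmb u=\varepsilon^{-2}(\curl)^2(\overline{\pmb u}-\pmb u)$; this is what lets $\varepsilon^4\fanshu[1]{(\curl)^2\pmb u}^2\lesssim\fanshu[0]{\curl(\pmb u-\overline{\pmb u})}^2$ be absorbed and produces $\varepsilon^2\fanshu[0]{(\curl)^2\pmb u}^2+\fanshu[0]{\curl(\pmb u-\overline{\pmb u})}^2\lesssim\varepsilon\fanshu[0]{\pmb f}^2$. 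Your ``$\varepsilon$-weighted trace inequality'' does not substitute for it. Note also that this same shift estimate, not a decomposition through $\overline{\pmb u}$, is what delivers $\varepsilon\fanshu[2]{\curl\pmb u}\lesssim\varepsilon^{-1/2}\fanshu[0]{\pmb f}$ in \eqref{regularitySP1}: your proposed route uses only the naive energy bound $\varepsilon\fanshu[0]{(\curl)^2\pmb u}\lesssim\fanshu[0]{\pmb f}$, which is a factor $\varepsilon^{-1/2}$ too weak. The duality argument for \eqref{regularitySP3} and the embedding for \eqref{regularitySP4} are fine as sketched, but both are downstream of the estimate you have not actually established.
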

\begin{proof}
We will establish the bounds above by similar arguments employed in \cite[Lemma 5.1]{Nilssen2001arobust}. Let us recall the following regularity estimate of \eqref{reducedproblem}
\begin{align}
\fanshu[2]{\overline{\pmb{u}}}\lesssim \fanshu[0]{\pmb{f}}.\label{regularityreduced}
\end{align}
The above estimate is a direct consequence of \cite[Theorem 3.5]{Zhang2018MixedQuadCurl}. Since
\begin{align}
(\curl)^4\pmb{u}=\varepsilon^{-2}(\curl)^2(\overline{\pmb{u}}-\pmb{u}),\label{spquadcurlminusreduced}
\end{align}
 then it follows from
\cite[Lemma A.1]{huang2022nonconforing} that
\begin{align}
\fanshu[2]{(\curl)^2\pmb{u}}\lesssim\varepsilon^{-2}\fanshu[-1]{(\curl)^2(\overline{\pmb{u}}-\pmb{u})}\lesssim\varepsilon^{-2}\fanshu[0]{\curl(\overline{\pmb{u}}-\pmb{u})}.\label{estimateuh}
\end{align}
Multiplying both sides of \eqref{spquadcurlminusreduced} by $\pmb{u}-\overline{\pmb{u}}$ and integrating by parts, we get
\begin{align}
\varepsilon^2\fanshu[0]{(\curl)^2\pmb{u}}+\fanshu[0]{\curl(\pmb{u}-\overline{\pmb{u}})}^2=\langle\varepsilon^2(\curl)^2\pmb{u},\pmb{n}\times(\curl\overline{\pmb{u}})\rangle+(\varepsilon^2(\curl)^2\pmb{u},\pmb{f}).\label{equality1}
\end{align}
Using the trace inequality \eqref{trace1} and \eqref{regularityreduced}, we get
\begin{align*}
\fanshu[0,\partial\Omega]{(\curl)^2\pmb{u}}\lesssim\fanshu[0]{(\curl)^2\pmb{u}}^{1/2}\fanshu[1]{(\curl)^2\pmb{u}}^{1/2}
\end{align*}
and
\begin{align*}
\fanshu[0,\partial\Omega]{\curl\overline{\pmb{u}}}\lesssim\fanshu[0]{\pmb{f}}.
\end{align*}
Furthermore, it is clear to see that
\begin{align*}
(\varepsilon^2(\curl)^2\pmb{u},\pmb{f})\le \frac{\varepsilon^2}{2}(\fanshu[0]{\curl)^2\pmb{u}}^2+\fanshu[0]{\pmb{f}}^2).
\end{align*}
Then by the Young's inequality we obtain
\begin{align}
\langle\varepsilon^2(\curl)^2\pmb{u},\pmb{n}\times(\curl\overline{\pmb{u}})\rangle\lesssim \varepsilon^3\delta\fanshu[0]{(\curl)^2\pmb{u}}\fanshu[1]{(\curl)^2\pmb{u}}+\frac{\varepsilon}{\delta}\fanshu[0]{\pmb{f}}^2,
\end{align}
where $\delta>0$.
From \eqref{estimateuh} we derive
\begin{align}
\varepsilon^3\fanshu[0]{(\curl)^2\pmb{u}}\fanshu[1]{(\curl)^2\pmb{u}}&\le\frac{1}{2}(\varepsilon^2\fanshu[0]{(\curl)^2\pmb{u}}^2+\varepsilon^4\fanshu[1]{(\curl)^2\pmb{u}}^2)\notag\\
&\lesssim\frac{1}{2}(\varepsilon^2\fanshu[0]{(\curl)^2\pmb{u}}^2+\fanshu[0]{\curl(\pmb{u}-\overline{\pmb{u}})}).\label{inequality1}
\end{align}
Combing \eqref{equality1}-\eqref{inequality1}, we have
\begin{align}
\varepsilon^2\fanshu[0]{(\curl)^2\pmb{u}}+\fanshu[0]{\curl(\pmb{u}-\overline{\pmb{u}})}^2\lesssim\varepsilon\fanshu[0]{\pmb{f}}^2.\label{inequality2}
\end{align}
It follows from \eqref{estimateuh} and \eqref{inequality2} that
\begin{align}
\fanshu[2]{\curl\pmb{u}}\lesssim\varepsilon^{-3/2}\fanshu[0]{\pmb{f}}.\label{estimatecurlu}
\end{align}
According to \cite[Lemma 3.6]{zhang2022anew}, we have
\begin{align}
\fanshu[2]{\pmb{u}}\lesssim\fanshu[1]{\curl\pmb{u}}\lesssim\fanshu[0]{(\curl)^2\pmb{u}},\label{inequality3}
\end{align}
where we have used the Friedrichs inequality and the fact that
\begin{align}
\fanshu[0]{(\curl)^2\pmb{u}}=\fanshu[0]{\nabla\curl\pmb{u}}.
\end{align}
By \eqref{inequality2} and \eqref{inequality3}, we derive
\begin{align}
\fanshu[2]{\pmb{u}}\lesssim\varepsilon^{-1/2}\fanshu[0]{\pmb{f}}.\label{inequality4}
\end{align}
The estimates \eqref{regularitySP1} and \eqref{regularitySP2} then follow from \eqref{inequality2}, \eqref{estimatecurlu} and \eqref{inequality4}.

We now turn to the $L^2$ estimate \eqref{regularitySP3}. To this end, we use a duality argument. For arbitrary $\pmb{v}\in\pmb{L}^2(\Omega)$, $\pmb{\psi}$ be the solution to the following problem:
\begin{equation}\label{dualmaxwell}
\begin{aligned}
(\nabla\times)^2\pmb{\psi}&=\pmb{v}\ \text{in }\Omega,\\
\nabla\cdot\pmb{\psi}&=0\ \text{in }\Omega,\\
\pmb{\psi}\times\pmb{n}&=0\ \text{on }\partial\Omega.
\end{aligned}
\end{equation}
Similar to \eqref{regularityreduced} we have
\begin{align}
\fanshu[2]{\pmb{\psi}}\lesssim \fanshu[0]{\pmb{v}}.\label{regularitymaxwelldual}
\end{align}
By \eqref{dualmaxwell}, \eqref{SPquadcurlproblem}, \eqref{reducedproblem} and integration by parts, we obtain
\begin{align}
(\pmb{u}-\overline{\pmb{u}},\pmb{v})=(\curl(\pmb{u}-\overline{\pmb{u}}),\curl\pmb{\psi})=(\varepsilon^2(\curl)^2\pmb{u},(\curl)^2\pmb{\psi})+\langle\varepsilon^2\pmb{n}\times(\curl)^2\pmb{u},\curl\pmb{\psi}\rangle.
\end{align}
Then by the trace inequality, the estimate \eqref{regularitySP1} and \eqref{regularitymaxwelldual}, we have
\begin{align*}
(\pmb{u}-\overline{\pmb{u}},\pmb{v})&\lesssim \varepsilon^2(\fanshu[0]{(\curl)^2\pmb{u}}+\fanshu[0]{(\curl)^2\pmb{u}}^{1/2}\fanshu[1]{(\curl)^2\pmb{u}}^{1/2})\fanshu[2]{\pmb{\psi}}\\
&\lesssim\varepsilon^2(\varepsilon^{-1/2}+\varepsilon^{-1/4}\varepsilon^{-3/4})\fanshu[0]{\pmb{f}}\fanshu[0]{\pmb{v}}\\
&\lesssim\varepsilon\fanshu[0]{\pmb{f}}\fanshu[0]{\pmb{v}},
\end{align*}
which leads to \eqref{regularitySP3}.

According to \cite[Theorem3.7]{Vivette1986finite}, recalling that $\nabla\cdot\pmb{u}=\nabla\cdot\overline{\pmb{u}}=0$, we derive
\begin{align}
\fanshu[1]{\pmb{u}-\overline{\pmb{u}}}\lesssim\fanshu[0]{\pmb{u}-\overline{\pmb{u}}}+\fanshu[0]{\curl(\pmb{u}-\overline{\pmb{u}})}.
\end{align}
Therefore, \eqref{regularitySP4} follows from \eqref{regularitySP2} and \eqref{regularitySP3}.
\end{proof}

\section{A family of nonconforming curl-curl finite element in 3D}\label{sectionFEM}
\subsection{The local space}
In this section we introduce a family of nonconforming elements for the singularly perturbed quad-curl problem \eqref{SPquadcurlproblem}. Since our the nonconforming elements are be based on $\pmb{H}(\mathrm{curl};\Omega)$ finite element spaces plus some bubble functions, we first review some well-known elements. For $K\in\mesh$, let $\pmb{N}^k(K)$ denote the local N\'{e}d\'{e}lec space of the first kind,
\begin{align}
\pmb{N}^k(K)=\pmb{P}^{k}(K)+\{\pmb{v}\in\pmb{P}^{k+1}(K):\pmb{v}\cdot\pmb{x}=0\}\quad(k\ge1),\label{Nedelec1}
\end{align}
and let $\pmb{M}^k(K)$ denote the local BDM space
\begin{align}
\pmb{M}^{k}(K)=\pmb{P}^{k}(K)\quad(k\ge1).\label{BDM}
\end{align}

We then define the local space for the 3D quad-curl problem as
\begin{align}
\pmb{U}^k(K)=\pmb{N}^k(K)+b_K\pmb{Q}^{k-1}(K),
\end{align}
where
\begin{align}
\pmb{Q}^{k-1}(K)=\sum_{F}b_F\pmb{Q}_F^{k-1}(K),
\end{align}
and
\begin{align}
\pmb{Q}_F^{k-1}(K)=\{&\pmb{q}\times\pmb{n}_F\in\pmb{P}^{k-1}(K)\times\pmb{n}_F:\notag\\
&\quad(\pmb{q}\times\pmb{n}_F,b_Kb_F(\pmb{w}\times\pmb{n}_F))_K=0\quad\forall\pmb{w}\in\pmb{P}^{k-2}(K) \}.\label{definitionQF}
\end{align}
The space $b_K\pmb{Q}^{k-1}(K)$ was introduced by \cite{guzman2012afamily}.

We equip the local space $\pmb{U}^k(K)$  with the following degrees of freedom
\begin{align}
&\langle\pmb{u}\cdot\pmb{t}_e,\kappa\rangle_e\qquad\qquad\qquad\text{for all }\kappa\in P^{k}(e) \text{ and edge }e\text{ of }K,\label{nonconDOFs1}\\
&\langle\pmb{u}\times\pmb{n}_F,\pmb{\mu}\rangle_F\qquad\qquad\ \text{ for all }\pmb{\mu}\in \pmb{P}^{k-1}(F) \text{ and faces }F\text{ of }K,\label{nonconDOFs2}\\
&\langle(\curl\pmb{u})\times\pmb{n}_F,\pmb{\chi}\rangle_F\quad\ \text{ for all }\pmb{\chi}\in \pmb{P}^{k-1}(F) \text{ and faces }F\text{ of }K,\label{nonconDOFs3}\\
&(\pmb{u},\pmb{\rho})_K\qquad\qquad\qquad\quad\text{for all }\pmb{\rho}\in \pmb{P}^{k-2}(K).\label{nonconDOFs4}
\end{align}
We proved in \cite[Theorem 3.1]{zhang2022anew} that any function $\pmb{u}\in\pmb{U}^k(K)$ can be uniquely determined by the degrees of freedom \eqref{nonconDOFs1}-\eqref{nonconDOFs4}.

\subsection{The global space and interpolation}
We have defined the local finite element spaces, then we can define the associated global spaces as
\begin{align}
\pmb{U}_h = \{\pmb{u}\in& \pmb{H}_0(\mathrm{curl};\Omega):\ \pmb{u}|_K\in\pmb{U}^k(K),\ \forall K\in\mesh,\notag\\
&\langle\jump{(\curl\pmb{u})\times\pmb{n}},\pmb{\chi}\rangle_F=0\ \forall\pmb{\chi}\in\pmb{P}^{k-1}(F),\ \forall F\in\face^i\},\label{definitionUh}\\
\pmb{U}_{h,0} = \{\pmb{u}\in& \pmb{U}_h\cap\pmb{H}_0(\mathrm{curl};\Omega): \langle\jump{(\curl\pmb{u})\times\pmb{n}},\pmb{\chi}\rangle_F=0\ \forall\pmb{\chi}\in\pmb{P}^{k-1}(F),\ \forall F\in\face^b\},\label{definitionUh0}\\
\pmb{N}_h = \{\pmb{u}\in& \pmb{H}_0(\mathrm{curl};\Omega):\ \pmb{u}|_K\in\pmb{N}^k(K),\ \forall K\in\mesh\},\\
\pmb{M}_h = \{\pmb{v}\in& \pmb{H}_0(\mathrm{div};\Omega):\ \pmb{v}|_K\in\pmb{M}^k(K),\ \forall K\in\mesh\},\\
W_{h,0} = \{w\in& H_0^1(\Omega):\ w|_K\in P^{k+1}(K),\ \forall K\in\mesh\}.
\end{align}

We now turn to investigate the interpolation operator associated to $\pmb{U}_h$. The degrees of freedom \eqref{nonconDOFs1}-\eqref{nonconDOFs4} naturally induce an interpolation operator $\pmb{\Pi}_{U}:\pmb{H}^1(\mathrm{curl};\Omega)\rightarrow\pmb{U}_h$ \cite{zhang2022anew}. However, such an operator may not satisfy the following property
\[\fanshu[0]{\curl\pmb{\Pi}_{U}\pmb{u}}\lesssim\fanshu[0]{\curl\pmb{u}},\]
and such a property is quite important for proving the uniform convergence with respect to $\varepsilon$. Therefore we need to define another operator.

To this end, we recall the projection operators introduced in \cite{ern2016molification}. According to \cite[Theorem 6.5]{ern2016molification}, there exist operators $\mathcal{J}_h^N: \pmb{H}_0(\mathrm{curl};{\Omega})\rightarrow\pmb{N}_h$ and $\mathcal{J}_h^M: \pmb{H}_0(\mathrm{div};{\Omega})\rightarrow\pmb{M}_h$ such that the following properties hold:
\begin{itemize}
  \item[(i)] $P(\mesh)$ is point-wise invariant under $\mathcal{J}_h$, where $P(\mesh)=\pmb{N}_h$ if $\mathcal{J}_h=\mathcal{J}_h^N$, and $P(\mesh)=\pmb{M}_h$ if $\mathcal{J}_h=\mathcal{J}_h^M$.
  \item[(ii)] There is $c$, uniform with respect to $h$, such that
  \begin{equation}
  \fanshu[\mathcal{L}(\pmb{L}^2(\Omega);\pmb{L}^2(\Omega))]{\mathcal{J}_h}\le c,\label{JhBounded}
  \end{equation}
  and
  \begin{equation}
  \fanshu[0]{\pmb{v}-\mathcal{J}_h\pmb{v}}\le C\inf_{\pmb{v}_h\in P(\mesh)}\fanshu[0]{\pmb{v}-\pmb{v}_h},\ \forall \pmb{v}\in\pmb{L}^2(\Omega).\label{Jhinf}
  \end{equation}
  \item[(iii)] The following diagram is commutative:
  \begin{equation}\label{Jhdiagram}
  \begin{CD}
  \pmb{H}_0(\mathrm{curl};{\Omega})@>\curl>>\pmb{H}_0(\mathrm{div};{\Omega})\\
  @VV\mathcal{J}_h^NV@VV\mathcal{J}_h^MV\\
  \pmb{N}_h@>\curl>>\pmb{M}_h
  \end{CD}
  \end{equation}
\end{itemize}

Furthermore, we can prove the following lemma.
\begin{lemma}\label{lemmaJhM}
Let integer $s$ satisfy $0\le s\le k+1$. Then for any $\pmb{w}\in\pmb{H}^s(\Omega)\cap\pmb{H}_0(\mathrm{div};\Omega)$, there holds
\begin{align}
\left(\sum_{K\in\mesh}\fanshu[m,K]{\pmb{w}-\mathcal{J}_h^M\pmb{w}}^2\right)^{1/2}\lesssim h^{s-m}\fanshu[s]{\pmb{w}},\ 0\le m\le s.
\end{align}
\end{lemma}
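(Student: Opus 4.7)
My plan is to follow the standard quasi-interpolation trick for bounding the error of a smoothing operator: split the error into a consistent-interpolation part and a remainder in the discrete space, and handle the discrete remainder by an inverse inequality together with the $L^2$-stability \eqref{JhBounded}.

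More concretely, let $I_h^M\colon \pmb{H}^s(\Omega)\cap\pmb{H}_0(\mathrm{div};\Omega)\to\pmb{M}_h$ be a suitable interpolation (for instance the canonical BDM interpolant when $s\ge 1$, or a Clément/Scott--Zhang style quasi-interpolant that preserves $\pmb{H}_0(\mathrm{div};\Omega)$ for $s=0$) satisfying the standard local estimate
\begin{equation*}
\left(\sum_{K\in\mesh}\fanshu[m,K]{\pmb{w}-I_h^M\pmb{w}}^2\right)^{1/2}\lesssim h^{s-m}\fanshu[s]{\pmb{w}},\qquad 0\le m\le s\le k+1.
\end{equation*}
By property (i) we have $\mathcal{J}_h^M(I_h^M\pmb{w})=I_h^M\pmb{w}$, so
\begin{equation*}
\pmb{w}-\mathcal{J}_h^M\pmb{w}=(\pmb{w}-I_h^M\pmb{w})-\mathcal{J}_h^M(\pmb{w}-I_h^M\pmb{w}).
\end{equation*}
The plan is then to estimate each piece on $K$ separately and sum.

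For the first piece, I would just cite the local approximation bound on $I_h^M$ recalled above. For the second piece I would exploit the fact that $\mathcal{J}_h^M(\pmb{w}-I_h^M\pmb{w})$ lies in $\pmb{M}_h$, hence is piecewise polynomial of fixed degree on $\mesh$. A standard inverse estimate on each $K$ gives
\begin{equation*}
\banfan[m,K]{\mathcal{J}_h^M(\pmb{w}-I_h^M\pmb{w})}\lesssim h_K^{-m}\fanshu[0,K]{\mathcal{J}_h^M(\pmb{w}-I_h^M\pmb{w})}.
\end{equation*}
Squaring, summing over $K$, using quasi-uniformity of the mesh, the global $L^2$-stability \eqref{JhBounded}, and finally the $m=0$ case of the local approximation bound on $I_h^M$ yields
\begin{equation*}
\left(\sum_{K\in\mesh}\banfan[m,K]{\mathcal{J}_h^M(\pmb{w}-I_h^M\pmb{w})}^2\right)^{1/2}\lesssim h^{-m}\fanshu[0]{\pmb{w}-I_h^M\pmb{w}}\lesssim h^{s-m}\fanshu[s]{\pmb{w}}.
\end{equation*}
Combining the two estimates via the triangle inequality produces the claim. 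The case $s=0$ follows directly from \eqref{JhBounded} without invoking $I_h^M$ at all.

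The main obstacle, and the only non-mechanical step, is isolating a reference interpolant $I_h^M$ into $\pmb{M}_h$ with the desired local approximation property that is valid under the stated regularity $\pmb{w}\in\pmb{H}^s(\Omega)\cap\pmb{H}_0(\mathrm{div};\Omega)$ without additional trace regularity; once such an $I_h^M$ is fixed, the argument above is entirely standard. I would handle this by appealing to the Scott--Zhang-type construction for $\pmb{H}(\mathrm{div})$-conforming BDM elements (which preserves the homogeneous normal trace and is defined on $\pmb{L}^2$), or by splitting into the cases $s=0$ (trivial from \eqref{JhBounded}) and $s\ge 1$ (canonical BDM interpolant, whose local approximation properties are classical).
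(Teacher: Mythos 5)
Your proposal is correct and follows essentially the same route as the paper: the paper also treats $s=m=0$ directly from the $L^2$-stability \eqref{JhBounded}, and for $s\ge 1$ inserts the canonical BDM interpolant $\pmb{\Pi}_h^M$, bounds the discrete remainder $\pmb{\Pi}_h^M\pmb{w}-\mathcal{J}_h^M\pmb{w}$ by a local inverse estimate, and controls its $L^2$-norm via the quasi-optimality \eqref{Jhinf} (equivalent, given the invariance property (i), to your use of \eqref{JhBounded}). Your worry about needing a Scott--Zhang-type construction is moot since $m\le s$ forces $s\ge 1$ whenever $m\ge 1$, exactly the regularity regime where the canonical interpolant suffices, which is how the paper proceeds.
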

\begin{proof}
If $s=m=0$, then using \eqref{JhBounded} we immediately derive
\begin{align*}
\fanshu[0]{\pmb{w}-\mathcal{J}_h^M\pmb{w}}\le \fanshu[0]{\pmb{w}}+\fanshu[0]{\mathcal{J}_h^M\pmb{w}}\lesssim\fanshu[0]{\pmb{w}}.
\end{align*}

We now consider $s\ge 1$. Let $\pmb{\Pi}_h^M:\pmb{H}_0(\mathrm{div};\Omega)\cap\pmb{H}^1(\Omega)\rightarrow\pmb{M}_h $ be the standard BDM interpolation \cite{boffi2013mixed}. Using the triangle inequality, inverse estimate, and
\cite[Theorem 11.13]{ern2021finteI}, we have
\begin{align*}
\fanshu[m,K]{\pmb{w}&-\mathcal{J}_h^M\pmb{w}}\le\fanshu[m,K]{\pmb{w}-\pmb{\Pi}_h^M\pmb{w}}+\fanshu[m,K]{\pmb{\Pi}_h^M\pmb{w}-\mathcal{J}_h^M\pmb{w}}\\
&\lesssim h_K^{s-m}\fanshu[s,K]{\pmb{w}}+h_K^{-m}\fanshu[0,K]{\pmb{\Pi}_h^M\pmb{w}-\mathcal{J}_h^M\pmb{w}}\\
&\le h_K^{s-m}\fanshu[s,K]{\pmb{w}}+h_K^{-m}(\fanshu[0,K]{\pmb{\Pi}_h^M\pmb{w}-\pmb{w}}+\fanshu[0,K]{\pmb{w}-\mathcal{J}_h^M\pmb{w}}).
\end{align*}
By the above estimate, \eqref{Jhinf} and the approximation property of $\pmb{\Pi}_h^M$, we obtain
\begin{align*}
\left(\sum_{K\in\mesh}\fanshu[m,K]{\pmb{w}-\mathcal{J}_h^M\pmb{w}}^2\right)^{1/2}&\lesssim h^{s-m}\fanshu[s]{\pmb{w}}+h^{-m}(\fanshu[0]{\pmb{\Pi}_h^M\pmb{w}-\pmb{w}}+\fanshu[0]{\pmb{w}-\mathcal{J}_h^M\pmb{w}})\\
&\lesssim h^{s-m}\fanshu[s]{\pmb{w}}+h^{-m}\fanshu[0]{\pmb{\Pi}_h^M\pmb{w}-\pmb{w}}\\
&\lesssim h^{s-m}\fanshu[s]{\pmb{w}},
\end{align*}
which ends our proof.
\end{proof}

We are now in a position to state the construction of operator related to $\pmb{U}_h$.
\begin{lemma}\label{lemmaIh}
There exists an operator $\pmb{I}_h:\pmb{H}_0(\mathrm{curl};\Omega)\rightarrow\pmb{U}_h$ such that for any $\pmb{v}\in\pmb{H}^s(\Omega)\cap\pmb{H}_0(\mathrm{curl};\Omega)$ and $\curl\pmb{v}\in\pmb{H}^s(\Omega)$ with $0\le s\le k+1$, there hold
\begin{align}
\fanshu[0]{\pmb{v}-\pmb{I}_h\pmb{v}}&\lesssim h^{s}(\fanshu[s]{\pmb{v}}+\fanshu[s]{\curl\pmb{v}}),\label{estimateIh1}\\
(\sum_{K\in\mesh}\fanshu[m,K]{\curl\pmb{v}-\curl \pmb{I}_h\pmb{v}}^2)^{1/2}&\lesssim h^{s-m}\fanshu[s]{\curl\pmb{v}}\quad 0\le m\le s,\label{estimateIh2}
\end{align}
Moreover, there exists $\pmb{I}_h^0:\pmb{H}_0(\mathrm{curl};\Omega)\rightarrow\pmb{U}_{h,0}$ such that if $\pmb{v}\in\pmb{H}_0(\mathrm{grad}\ \mathrm{curl};\Omega)$
\begin{align}
\fanshu[0]{\pmb{v}-\pmb{I}_h^0\pmb{v}}&\lesssim h^{s}(\fanshu[s]{\pmb{v}}+\fanshu[s]{\curl\pmb{v}}),\label{estimateIh01}\\
(\sum_{K\in\mesh}\fanshu[m,K]{\curl\pmb{v}-\curl \pmb{I}_h^0\pmb{v}}^2)^{1/2}&\lesssim h^{s-m}\fanshu[s]{\curl\pmb{v}}\quad 0\le m\le s,\label{estimateIh02}
\end{align}
if $\pmb{v}\in\pmb{H}_0(\mathrm{curl};\Omega)$ and $\curl\pmb{v}\notin\pmb{H}_0^1(\Omega)$
\begin{align}
\fanshu[0]{\pmb{v}-\pmb{I}_h^0\pmb{v}}&\lesssim h (\fanshu[1]{\pmb{v}}+\fanshu[1]{\curl\pmb{v}}),\label{estimateIh01A}\\
(\sum_{K\in\mesh}\fanshu[m,K]{\curl\pmb{v}-\curl \pmb{I}_h^0\pmb{v}}^2)^{1/2}&\lesssim h^{s-m}\fanshu[s]{\curl\pmb{v}}+h^{-m}\fanshu[0]{\curl\pmb{v}}\quad 0\le m\le s.\label{estimateIh02A}
\end{align}
\end{lemma}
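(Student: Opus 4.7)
The plan is to build $\pmb{I}_h\pmb{v}$ by post-processing the smoothing projection $\mathcal{J}_h^N\pmb{v}$ with element-wise bubble corrections, then to transfer all approximation estimates from the Ern--Guermond operators $\mathcal{J}_h^N,\mathcal{J}_h^M$ (via their commuting diagram \eqref{Jhdiagram} and Lemma \ref{lemmaJhM}) to $\pmb{I}_h\pmb{v}$. More precisely, on each $K\in\mesh$ I would define $\pmb{I}_h\pmb{v}|_K\in\pmb{U}^k(K)$ as the unique element whose edge moments \eqref{nonconDOFs1}, tangential face moments \eqref{nonconDOFs2}, and interior moments \eqref{nonconDOFs4} coincide with those of $\mathcal{J}_h^N\pmb{v}$, and whose face moments \eqref{nonconDOFs3} are set, on each face $F\subset\partial K$, to the single-valued quantity
\[
\langle\mean{(\curl\mathcal{J}_h^N\pmb{v})\times\pmb{n}_F},\pmb{\chi}\rangle_F
\qquad\forall\pmb{\chi}\in\pmb{P}^{k-1}(F).
\]
Unisolvence of \eqref{nonconDOFs1}--\eqref{nonconDOFs4} guarantees existence and uniqueness; because both elements sharing an interior face use the same moment, $\pmb{I}_h\pmb{v}\in\pmb{U}_h$ is automatic. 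The correction $\pmb{b}_h:=\pmb{I}_h\pmb{v}-\mathcal{J}_h^N\pmb{v}$ lies in $b_K\pmb{Q}^{k-1}(K)$ on each $K$ (all of its DOFs except \eqref{nonconDOFs3} vanish), and its \eqref{nonconDOFs3} moment equals $\tfrac12\langle\jump{(\curl\mathcal{J}_h^N\pmb{v})\times\pmb{n}_F},\pmb{\chi}\rangle_F$.

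For \eqref{estimateIh1} I would split $\pmb{v}-\pmb{I}_h\pmb{v}=(\pmb{v}-\mathcal{J}_h^N\pmb{v})-\pmb{b}_h$. The first term is handled by the approximation estimate for $\mathcal{J}_h^N$ (same form as Lemma \ref{lemmaJhM}), contributing $h^s\fanshu[s]{\pmb{v}}$. For the bubble $\pmb{b}_h|_K$ a scaling/equivalence-of-norms argument on $b_K\pmb{Q}^{k-1}(K)$ gives $\fanshu[0,K]{\pmb{b}_h}\lesssim h_K^{1/2}\sum_{F\subset\partial K}\fanshu[0,F]{\jump{(\curl\mathcal{J}_h^N\pmb{v})\times\pmb{n}_F}}$. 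Using the commuting diagram $\curl\mathcal{J}_h^N\pmb{v}=\mathcal{J}_h^M\curl\pmb{v}$ and the fact that $\jump{(\curl\pmb{v})\times\pmb{n}_F}=0$ whenever $\curl\pmb{v}\in\pmb{H}^s(\Omega)$ with $s\ge 1$, the jump equals $-\jump{((\mathcal{J}_h^M-I)\curl\pmb{v})\times\pmb{n}_F}$. The trace inequality \eqref{trace2} combined with Lemma \ref{lemmaJhM} then yields $\fanshu[0,F]{\ldots}\lesssim h^{s-1/2}\fanshu[s]{\curl\pmb{v}}$, so $\fanshu[0,K]{\pmb{b}_h}\lesssim h_K^{s}\fanshu[s]{\curl\pmb{v}}$. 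The $s=0$ case is handled directly from the stability bound \eqref{JhBounded} and an inverse estimate on the bubble. Estimate \eqref{estimateIh2} is obtained by writing $\curl\pmb{v}-\curl\pmb{I}_h\pmb{v}=(\curl\pmb{v}-\mathcal{J}_h^M\curl\pmb{v})-\curl\pmb{b}_h$, bounding the first piece directly by Lemma \ref{lemmaJhM}, and the second by the inverse estimate $\fanshu[m,K]{\curl\pmb{b}_h}\lesssim h_K^{-1-m}\fanshu[0,K]{\pmb{b}_h}$ combined with the bubble bound just obtained.

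The operator $\pmb{I}_h^0$ is defined identically, except on boundary faces $F\in\face^b$ the face moment \eqref{nonconDOFs3} is set to $0$, forcing membership in $\pmb{U}_{h,0}$. When $\pmb{v}\in\pmb{H}_0(\mathrm{grad\ curl};\Omega)$ we have $\curl\pmb{v}|_{\partial\Omega}=0$, so on boundary faces the artificial zero moment differs from $\langle(\mathcal{J}_h^M\curl\pmb{v})\times\pmb{n},\pmb{\chi}\rangle_F$ by the same ``approximation-error jump'' quantity as in the interior case, and the analysis above gives \eqref{estimateIh01}--\eqref{estimateIh02}. Without the assumption $\curl\pmb{v}\in\pmb{H}_0^1(\Omega)$, the boundary moment of $\mathcal{J}_h^M\curl\pmb{v}$ is a full trace of $\curl\pmb{v}$ rather than a consistency jump; bounding it by \eqref{trace3} gives the extra $h^{-m}\fanshu[0]{\curl\pmb{v}}$ term in \eqref{estimateIh02A} and the first-order bound \eqref{estimateIh01A}.

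The main obstacle is the bubble scaling argument: getting both $\fanshu[0,K]{\pmb{b}_h}$ and, via inverse estimates, $\fanshu[m,K]{\curl\pmb{b}_h}$ to the correct powers of $h_K$ through quantities of the form $\fanshu[0,F]{\jump{(\mathcal{J}_h^M\curl\pmb{v})\times\pmb{n}}}$. The cleanest route is to prove once and for all an elementwise equivalence $\fanshu[0,K]{\pmb{b}}^2\simeq h_K\sum_{F\subset\partial K}\fanshu[0,F]{\text{moments of }\pmb{b}}^2$ on $b_K\pmb{Q}^{k-1}(K)$ using the bubble basis, which reduces the global analysis to the Ern--Guermond approximation bounds that the lemma relies on.
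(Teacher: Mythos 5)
Your construction is exactly the paper's: the paper defines a local projection $\pmb{\Pi}_h$ on $\pmb{N}_h$ by matching the degrees of freedom \eqref{nonconDOFs1}, \eqref{nonconDOFs2}, \eqref{nonconDOFs4} and assigning the averaged curl face moments in \eqref{Pih3}, sets $\pmb{I}_h=\pmb{\Pi}_h\mathcal{J}_h^N$, and exploits the commuting diagram \eqref{Jhdiagram} to reduce everything to jumps of $\mathcal{J}_h^M\curl\pmb{v}$; the variant $\pmb{I}_h^0$ zeroes the boundary face moments, with the dichotomy between $\curl\pmb{v}\in\pmb{H}_0^1(\Omega)$ and not, just as you describe. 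So the route is the same, and the structure of your argument (split off the correction determined by the face curl moments, bound it by the jump data, use the trace inequality plus Lemma \ref{lemmaJhM}, and invert for the curl estimates) is sound.

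There is, however, one concrete quantitative error: the scaling of the bubble correction. You claim $\fanshu[0,K]{\pmb{b}_h}\lesssim h_K^{1/2}\sum_{F\subset\partial K}\fanshu[0,F]{\jump{(\curl\mathcal{J}_h^N\pmb{v})\times\pmb{n}}}$ (and, at the end, the equivalence $\fanshu[0,K]{\pmb{b}}^2\simeq h_K\sum_F\fanshu[0,F]{\cdot}^2$). The correct power is $h_K^{3/2}$, as in the paper's \eqref{estimateL1I1}: dimensionally, $\fanshu[0,K]{\pmb{b}}$ scales like $|\pmb{b}|\,h_K^{3/2}$ while $\fanshu[0,F]{(\curl\pmb{b})\times\pmb{n}}$ scales like $|\pmb{b}|\,h_K^{-1}\cdot h_K=|\pmb{b}|$, so the ratio is $h_K^{3/2}$. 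With your $h_K^{1/2}$ you obtain $\fanshu[0,K]{\pmb{b}_h}\lesssim h^{s}\fanshu[s]{\curl\pmb{v}}$, which coincidentally still yields \eqref{estimateIh1}; but the same bound fed into the inverse estimate $\fanshu[m,K]{\curl\pmb{b}_h}\lesssim h_K^{-1-m}\fanshu[0,K]{\pmb{b}_h}$ gives only $h^{s-1-m}\fanshu[s]{\curl\pmb{v}}$, one full power of $h$ short of \eqref{estimateIh2}. With the correct $h_K^{3/2}$ the correction is $O(h^{s+1})$ in $L^2$ and the inverse estimate delivers exactly $h^{s-m}$, as in the paper's \eqref{estimateCurlvIhS1}. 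The same repair is needed in the $s=0$ case and throughout the $\pmb{I}_h^0$ analysis. Everything else in your outline goes through once this exponent is corrected.
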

\begin{proof}
We start with defining the projection $\pmb{\Pi}_h: \pmb{N}_h\rightarrow\pmb{U}_h$ locally as
\begin{align}
&\langle(\pmb{\Pi}_{h}\pmb{w}-\pmb{w})\cdot\pmb{t}_e,\kappa\rangle_e=0\qquad\qquad\qquad\text{for all }\kappa\in P^{k}(e) \text{ and edge }e\text{ of }K,\label{Pih1}\\
&\langle(\pmb{\Pi}_{h}\pmb{w}-\pmb{w})\times\pmb{n}_F,\pmb{\mu}\rangle_F=0\qquad\qquad\ \text{ for all }\pmb{\mu}\in \pmb{P}^{k-1}(F) \text{ and faces }F\text{ of }K,\label{Pih2}\\
&\langle(\curl\pmb{\Pi}_{h}\pmb{w})\times\pmb{n}_F-\mean{(\curl\pmb{w})\times\pmb{n}},\pmb{\chi}\rangle_F=0\ \text{ for all }\pmb{\chi}\in \pmb{P}^{k-1}(F) \text{ and faces }F\text{ of }K,\label{Pih3}\\
&(\pmb{\Pi}_{h}\pmb{w}-\pmb{w},\pmb{\rho})_K=0\qquad\qquad\qquad\quad\text{for all }\pmb{\rho}\in \pmb{P}^{k-2}(K).\label{Pih4}
\end{align}
According to \cite[Theorem 3.1]{zhang2022anew} and the definition of $\pmb{U}_h$, $\pmb{\Pi}_h$ is well-defined.

It is easy to check that for any simplex $K$ that contains $F$ as a face we have
\begin{align}\label{jumpequation}
\Big{|}\mean{\curl\pmb{w}}|_F-\curl(\pmb{w}|_K)\times\pmb{n}_F|_F\Big{|}=
\begin{cases}
\frac{1}{2}|\jump{\curl\pmb{w}\times\pmb{n}}|\quad &F\in\face^i,\\
0&F\in\face^b.
\end{cases}
\end{align}

Therefore by \eqref{Pih1}-\eqref{jumpequation} and a scaling argument, we obtain
\begin{align}\label{estimateL1I1}
\fanshu[0,K]{\pmb{\Pi}_h\pmb{w}-\pmb{w}}\lesssim h_K^{3/2}\sum_{F\subset\partial K\cap\face^i}\fanshu[0,F]{\jump{\curl\pmb{w}\times\pmb{n}}}.
\end{align}
We then set $\pmb{I}_h=\pmb{\Pi}_h\mathcal{J}_h^{N}$. If $s=0$, then by a scaling argument and \eqref{Jhdiagram} we have
\begin{align}
h_K^{3/2}\sum_{F\subset\partial K\cap\face^i}\fanshu[0,F]{\jump{(\curl\mathcal{J}_h^N\pmb{w})\times\pmb{n}}}&\lesssim h_K\fanshu[0,\mathcal{T}_K]{\curl\mathcal{J}_h^N\pmb{w}}\notag\\
&=h_K\fanshu[0,\mathcal{T}_K]{\mathcal{J}_h^M\curl\pmb{w}}.\label{estimateJhbound}
\end{align}
From \eqref{estimateL1I1} and \eqref{estimateJhbound} we get
\begin{align*}
\fanshu[0,K]{\pmb{v}-\pmb{I}_h\pmb{v}}&\le\fanshu[0,K]{\pmb{v}-\mathcal{J}_h^N\pmb{v}}+\fanshu[0,K]{\mathcal{J}_h^N\pmb{v}-\pmb{\Pi}_h\mathcal{J}_h^N\pmb{v}}\\
&\lesssim \fanshu[0,K]{\pmb{v}-\mathcal{J}_h^N\pmb{v}}+h_K^{3/2}\sum_{F\subset\partial K\cap\face^i}\fanshu[0,F]{\jump{\curl\mathcal{J}_h^N\pmb{v}\times\pmb{n}}}\\
&\lesssim\fanshu[0,K]{\pmb{v}-\mathcal{J}_h^N\pmb{v}}+h_K\fanshu[0,\mathcal{T}_K]{\mathcal{J}_h^M\curl\pmb{v}}.
\end{align*}
The above estimate and \eqref{JhBounded} lead to
\begin{align}
\fanshu[0]{\pmb{v}-\pmb{I}_h\pmb{v}}&\lesssim \fanshu[0]{\pmb{v}-\mathcal{J}_h^N\pmb{v}}+h\fanshu[0]{\mathcal{J}_h^M\curl\pmb{v}}\notag\\
&\lesssim\fanshu[0]{\pmb{v}}+\fanshu[0]{\curl\pmb{v}}.\label{estimateL2vIhs0}
\end{align}
Using the inverse estimate, \eqref{estimateL1I1} and \eqref{estimateJhbound} we have
\begin{align*}
\fanshu[0,K]{\curl(\pmb{v}-\pmb{I}_h\pmb{v})}&\le\fanshu[0,K]{\curl(\pmb{v}-\mathcal{J}_h^N\pmb{v})}+\fanshu[0,K]{\curl(\mathcal{J}_h^N\pmb{v}-\pmb{\Pi}_h\mathcal{J}_h^N\pmb{v})}\\
&\lesssim\fanshu[0,K]{\curl\pmb{v}-\mathcal{J}_h^M\curl\pmb{v}}+h_K^{-1}\fanshu[0,K]{\mathcal{J}_h^N\pmb{v}-\pmb{\Pi}_h\mathcal{J}_h^N\pmb{v}}\\
&\lesssim\fanshu[0,K]{\curl\pmb{v}-\mathcal{J}_h^M\curl\pmb{v}}+\fanshu[0,\mathcal{T}_K]{\mathcal{J}_h^M\curl\pmb{v}}.
\end{align*}
The above estimate and \eqref{JhBounded} yield
\begin{align}\label{curlvminusIhS0}
\fanshu[0]{\curl(\pmb{v}-\pmb{I}_h\pmb{v})}\lesssim\fanshu[0]{\curl\pmb{v}-\mathcal{J}_h^M\curl\pmb{v}}+\fanshu[0]{\mathcal{J}_h^M\curl\pmb{v}}\lesssim\fanshu[0]{\curl\pmb{v}}.
\end{align}

We now consider $s\ge1$. Note that $\jump{\curl\pmb{w}\times\pmb{n}}$ vanishes on $F\in\face^i$, if $\curl\pmb{w}\in \pmb{H}^1(\Omega)$. Then using the trace inequality \eqref{trace3} and \eqref{Jhdiagram} we derive
\begin{align}
h_K^{3/2}\sum_{F\subset\partial K\cap\face^i}&\fanshu[0,F]{\jump{(\curl\mathcal{J}_h^N\pmb{w})\times\pmb{n}}}=h_K^{3/2}\sum_{F\subset\partial K\cap\face^i}\fanshu[0,F]{\jump{(\curl(\mathcal{J}_h^N\pmb{w}-\pmb{w}))\times\pmb{n}}}\notag\\
&\lesssim h_K\fanshu[0,\mathcal{T}_K]{\curl(\pmb{w}-\mathcal{J}_h^N\pmb{w})}+h_K^2|\curl(\pmb{w}-\mathcal{J}_h^N\pmb{w})|_{1,\mathcal{T}_K}\notag\\
&=h_K\fanshu[0,\mathcal{T}_K]{\curl\pmb{w}-\mathcal{J}_h^M\curl\pmb{w}}+h_K^2|\curl\pmb{w}-\mathcal{J}_h^M\curl\pmb{w}|_{1,\mathcal{T}_K}.\label{estimateJhboundS1}
\end{align}
From \eqref{estimateL1I1} and \eqref{estimateJhboundS1} we obtain
\begin{align*}
\fanshu[0,K]{\pmb{v}&-\pmb{I}_h\pmb{v}}\le\fanshu[0,K]{\pmb{v}-\mathcal{J}_h^N\pmb{v}}+\fanshu[0,K]{\mathcal{J}_h^N\pmb{v}-\pmb{\Pi}_h\mathcal{J}_h^N\pmb{v}}\\
&\lesssim \fanshu[0,K]{\pmb{v}-\mathcal{J}_h^N\pmb{v}}+h_K^{3/2}\sum_{F\subset\partial K\cap\face^i}\fanshu[0,F]{\jump{\curl\mathcal{J}_h^N\pmb{v}\times\pmb{n}}}\\
&\lesssim\fanshu[0,K]{\pmb{v}-\mathcal{J}_h^N\pmb{v}}+h_K\fanshu[0,\mathcal{T}_K]{\curl\pmb{v}-\mathcal{J}_h^M\curl\pmb{v}}+h_K^2|\curl\pmb{v}-\mathcal{J}_h^M\curl\pmb{v}|_{1,\mathcal{T}_K}.
\end{align*}
By the above estimate, Lemma \ref{lemmaJhM} and \eqref{Jhinf} we have
\begin{align}
\fanshu[0]{\pmb{v}-\pmb{I}_h\pmb{v}}&\lesssim\fanshu[0]{\pmb{v}-\mathcal{J}_h^N\pmb{v}}+h\fanshu[0]{\curl\pmb{v}-\mathcal{J}_h^M\curl\pmb{v}}\notag\\
&+h^2(\sum_{K\in\mesh}|\curl\pmb{v}-\mathcal{J}_h^M\curl\pmb{v}|_{1,K}^2)^{1/2}\notag\\
&\lesssim h^s\fanshu[s]{\pmb{v}}+h^s\fanshu[s]{\curl\pmb{v}}.\label{estimateL2vIhS1}
\end{align}
Combing \eqref{estimateL2vIhs0} and \eqref{estimateL2vIhS1} we arrive at \eqref{estimateIh1}.

Using the inverse estimate, \eqref{estimateL1I1} and \eqref{estimateJhboundS1} we have
\begin{align*}
\fanshu[m,K]{\curl(\pmb{v}-\pmb{I}_h\pmb{v})}&\le\fanshu[m,K]{\curl(\pmb{v}-\mathcal{J}_h^N\pmb{v})}+\fanshu[m,K]{\curl(\mathcal{J}_h^N\pmb{v}-\pmb{\Pi}_h\mathcal{J}_h^N\pmb{v})}\\
&\lesssim\fanshu[m,K]{\curl\pmb{v}-\mathcal{J}_h^M\curl\pmb{v}}+h_K^{-m-1}\fanshu[0,K]{\mathcal{J}_h^N\pmb{v}-\pmb{\Pi}_h\mathcal{J}_h^N\pmb{v}}\\
&\lesssim\fanshu[m,K]{\curl\pmb{v}-\mathcal{J}_h^M\curl\pmb{v}}+h_K^{-m}\fanshu[0,\mathcal{T}_K]{\curl\pmb{v}-\mathcal{J}_h^M\curl\pmb{v}}\\
&+h_K^{1-m}|\curl\pmb{v}-\mathcal{J}_h^M\curl\pmb{v}|_{1,\mathcal{T}_K}.
\end{align*}
By the above estimate and Lemma \ref{lemmaJhM} we obtain
\begin{align}
(\sum_{K\in\mesh}&\fanshu[m,K]{\curl(\pmb{v}-\pmb{I}_h\pmb{v})}^2)^{1/2}\lesssim(\sum_{K\in\mesh}\fanshu[m,K]{\curl\pmb{v}-\mathcal{J}_h^M\curl\pmb{v}}^2)^{1/2}\notag\\
&+h^{-m}(\sum_{K\in\mesh}\fanshu[0,K]{\curl\pmb{v}-\mathcal{J}_h^M\curl\pmb{v}}^2)^{1/2}+h^{1-m}|(\sum_{K\in\mesh}\curl\pmb{v}-\mathcal{J}_h^M\curl\pmb{v}|_{1,\mathcal{T}_K}^2)^{1/2}\notag\\
&\lesssim h^{s-m}\fanshu[s]{\curl\pmb{v}}.\label{estimateCurlvIhS1}
\end{align}
Combing \eqref{curlvminusIhS0} and \eqref{estimateCurlvIhS1} we derive \eqref{estimateIh2}.

We now turn to the construction of $\pmb{I}_h^0$. To this end, we modify the construction of $\pmb{\Pi}_h$ by replacing \eqref{Pih3} with
\begin{equation}
\begin{aligned}
\langle(\curl\pmb{\Pi}_{h}\pmb{w})&\times\pmb{n}_F-\mean{(\curl\pmb{w})\times\pmb{n}},\pmb{\chi}\rangle_F=0\\ &\text{ for all }\pmb{\chi}\in \pmb{P}^{k-1}(F) \text{ and interior faces }F\text{ of }K,\\
\langle(\curl\pmb{\Pi}_{h}\pmb{w})\times\pmb{n}_F,\pmb{\chi}\rangle_F&=0\ \text{ for all }\pmb{\chi}\in \pmb{P}^{k-1}(F) \text{ and boundary faces }F\text{ of }K.
\end{aligned}
\end{equation}
Recalling \eqref{jumpequation} for interior faces $F$ and using the fact that
\begin{align*}
\langle(\curl(\pmb{\Pi}_{h}\pmb{w}-\pmb{w}))\times\pmb{n}_F,\pmb{\chi}\rangle_F&=-\langle(\curl\pmb{w})\times\pmb{n}_F,\pmb{\chi}\rangle_F\\ \text{ for all }\pmb{\chi}\in \pmb{P}^{k-1}(F) &\text{ and boundary faces }F\text{ of }K,
\end{align*}
we get
\begin{align}\label{estimateL1I3}
\fanshu[0,K]{\pmb{\Pi}_h\pmb{w}-\pmb{w}}\lesssim h_K^{3/2}\sum_{F\subset\partial K}\fanshu[0,F]{\jump{\curl\pmb{w}\times\pmb{n}}}.
\end{align}
We point out that, in contrast to \eqref{estimateL1I1}, the right-hand side of \eqref{estimateL1I3} may include boundary faces. If $\pmb{v}\in\pmb{H}_0(\mathrm{grad}\ \mathrm{curl};\Omega)$, then by the same argument as \eqref{estimateJhbound}and\eqref{estimateJhboundS1} we obtain
\begin{align*}
h_K^{3/2}\sum_{F\subset\partial K\cap\face}\fanshu[0,F]{\jump{(\curl\mathcal{J}_h^N\pmb{w})\times\pmb{n}}}&\lesssim h_K\fanshu[0,\mathcal{T}_K]{\mathcal{J}_h^M\curl\pmb{w}}, \text{ if }s=0,\\
h_K^{3/2}\sum_{F\subset\partial K\cap\face}\fanshu[0,F]{\jump{(\curl\mathcal{J}_h^N\pmb{w})\times\pmb{n}}}&\lesssim h_K\fanshu[0,\mathcal{T}_K]{\curl\pmb{w}-\mathcal{J}_h^M\curl\pmb{w}}\\
&+h_K^2|\curl\pmb{w}-\mathcal{J}_h^M\curl\pmb{w}|_{1,\mathcal{T}_K},\text{ if }s\ge1.
\end{align*}
Therefore, if $\pmb{v}\in\pmb{H}_0(\mathrm{grad}\ \mathrm{curl};\Omega)$, then from the same argument as in the proof of \eqref{estimateIh1} and \eqref{estimateIh2} we get \eqref{estimateIh01} and \eqref{estimateIh02}.

However, if $\pmb{v}\in\pmb{H}_0(\mathrm{curl};\Omega)$ and $\curl\pmb{v}\notin\pmb{H}_0^1(\Omega)$ then we can only get the estimate
\begin{align}
h_K^{3/2}\sum_{F\subset\partial K\cap\face^b}\fanshu[0,F]{\jump{(\curl\mathcal{J}_h^{N}\pmb{v})\times\pmb{n}}}\lesssim h_K\fanshu[0,\mathcal{T}_K]{\mathcal{J}_h^M\curl\pmb{v}}.
\end{align}
It then follows that
\begin{align*}
\fanshu[0,K]{\pmb{v}&-\pmb{I}_h^0\pmb{v}}\le\fanshu[0,K]{\pmb{v}-\mathcal{J}_h^N\pmb{v}}+\fanshu[0,K]{\mathcal{J}_h^N\pmb{v}-\pmb{\Pi}_h\mathcal{J}_h^N\pmb{v}}\\
&\lesssim \fanshu[0,K]{\pmb{v}-\mathcal{J}_h^N\pmb{v}}+h_K^{3/2}\sum_{F\subset\partial K}\fanshu[0,F]{\jump{\curl\mathcal{J}_h^N\pmb{v}\times\pmb{n}}}\\
&\lesssim \fanshu[0,K]{\pmb{v}-\mathcal{J}_h^N\pmb{v}}+h_K\fanshu[0,\mathcal{T}_K]{\mathcal{J}_h^M\curl\pmb{v}}.
\end{align*}
and
\begin{align*}
\fanshu[m,K]{\curl(\pmb{v}-\pmb{I}_h^0\pmb{v})}&\le \fanshu[m,K]{\curl(\pmb{v}-\mathcal{J}_h^{N}\pmb{v})}+\fanshu[m,K]{\curl(\mathcal{J}_h^{N}\pmb{v}-\pmb{\Pi}_h\mathcal{J}_h^{N}\pmb{v})}\\
&\lesssim\fanshu[m,K]{\curl(\pmb{v}-\mathcal{J}_h^{N}\pmb{v})}+h_K^{3/2-m-1}\sum_{F\subset\partial T}\fanshu[0,F]{\jump{(\curl\mathcal{J}_h^{N}\pmb{v})\times\pmb{n}}}\\
&\lesssim\fanshu[m,K]{\curl(\pmb{v}-\mathcal{J}_h^{N}\pmb{v})}+h_K^{-m}\fanshu[0,\mathcal{T}_K]{\mathcal{J}_h^M\curl\pmb{v}}.
\end{align*}
Therefore we have
\begin{align*}
\fanshu[0]{\pmb{v}-\pmb{I}_h^0\pmb{v}}&\lesssim \fanshu[0]{\pmb{v}-\mathcal{J}_h^N\pmb{v}}+h\fanshu[0]{\curl\pmb{v}}\notag\\
&\lesssim h(\fanshu[1]{\pmb{v}}+\fanshu[1]{\curl\pmb{v}})
\end{align*}
and
\begin{align*}
(\sum_{K\in\mesh}\fanshu[m,K]{\curl\pmb{v}-\curl \pmb{I}_h^0\pmb{v}}^2)^{1/2}&\lesssim
(\sum_{K\in\mesh}\fanshu[m,K]{\curl(\pmb{v}-\mathcal{J}_h^{N}\pmb{v})}^2)^{1/2}+h^{-m}\fanshu[0]{\mathcal{J}_h^M\curl\pmb{v}}\\
&\lesssim h^{s-m}\fanshu[s]{\curl\pmb{v}}+h^{-m}\fanshu[0]{\curl\pmb{v}}.
\end{align*}

\end{proof}

\section{The finite element method}\label{sectionApplication}
This section is devoted to introduce the finite element method for \eqref{SPquadcurlproblem} and analyze its convergence. To this end, we consider the following equivalent problem of \eqref{SPquadcurlproblem}
\begin{equation}\label{SPcurlLapcurlproblem}
\begin{aligned}
-\varepsilon^2\curl\Delta\curl\pmb{u}+(\nabla\times)^2\pmb{u}&=\pmb{f}\ \text{in }\Omega,\\
\nabla\cdot\pmb{u}&=0\ \text{in }\Omega,\\
\pmb{u}\times\pmb{n}&=0\ \text{on }\partial\Omega,\\
\nabla\times\pmb{u}&=0\ \text{on }\partial\Omega.
\end{aligned}
\end{equation}
The above problem can be obtained by the identity $\curl\curl\pmb{v}=-\Delta\pmb{v}+\nabla\nabla\cdot\pmb{v}$ and the fact that
\begin{align*}
\curl\pmb{u}\cdot\pmb{n}=(\pmb{n}\times\nabla)\cdot\pmb{u}=(\pmb{n}\times\nabla)\cdot(\pmb{n}\times\pmb{u}\times\pmb{n})=0\text{ on }\partial\Omega.
\end{align*}
Then the variational problem reads: Find $\pmb{u}\in\pmb{H}_0^1(\mathrm{grad}~\mathrm{curl};\Omega)$ and $p\in H_0^1(\Omega)$ such that
\begin{equation}
\begin{aligned}
A_{\varepsilon}(\pmb{u},\pmb{v})+(\pmb{v},\nabla p)&=(\pmb{f},\pmb{v}),\quad\forall\pmb{v}\in\pmb{H}_0^1(\mathrm{grad}~\mathrm{curl};\Omega),\notag\\
(\pmb{u},\nabla q)&=0,\quad\forall q\in H_0^1(\Omega),
\end{aligned}
\end{equation}
where
\begin{align}
A_{\varepsilon,h}(\pmb{v},\pmb{w})=\varepsilon^2a(\pmb{v},\pmb{w})+(\curl\pmb{v},\curl\pmb{w}),\label{definitionAepsh}
\end{align}
with
\begin{align}
a(\pmb{v},\pmb{w})=(\nabla\curl\pmb{v},\nabla\curl\pmb{w}).
\end{align}

\subsection{Analysis of imposing boundary conditions weakly}
To define the finite element method, we introduce the bilinear form
\begin{align}\label{definitionah}
a_h(\pmb{v},\pmb{w})&=\sum_{K\in\mesh}(\nabla\curl\pmb{v},\nabla\curl\pmb{w})_K-\sum_{F\in\face^b}\langle\frac{\partial}{\partial n}\curl\pmb{v},\curl\pmb{w}\rangle_F\notag\\
&-\sum_{F\in\edge^b}\langle\frac{\partial}{\partial n}\curl\pmb{w},\curl\pmb{v}\rangle_F+\sigma\sum_{F\in\face^b}h_F^{-1}\langle\curl\pmb{v},\curl\pmb{w}\rangle_F,
\end{align}
where $\sigma$ is a positive penalization parameter, and $\frac{\partial}{\partial n}=\pmb{n}_F\cdot\nabla$.

The finite element method then reads: Find $\pmb{u}_h\in\pmb{U}_h$ and $p_h\in W_{h,0}$ such that
\begin{equation}
\begin{aligned}
A_{\varepsilon,h}(\pmb{u}_h,\pmb{v}_h)+(\pmb{v}_h,\nabla p_h)&=(\pmb{f},\pmb{v}_h),\quad\forall\pmb{v}_h\in\pmb{U}_h,\\
(\pmb{u}_h,\nabla q_h)&=0,\quad\forall q_h\in W_{h,0},
\end{aligned}\label{discreteSPquad}
\end{equation}
where
\begin{align}
A_{\varepsilon,h}(\pmb{v}_h,\pmb{w}_h)=\varepsilon^2a_h(\pmb{v}_h,\pmb{w}_h)+(\curl\pmb{v}_h,\curl\pmb{w}_h).
\end{align}
\begin{remark}
Inserting $\pmb{v}_h=\nabla q_h,\ \forall q_h\in W_{h,0}$ in the first equation of \eqref{discreteSPquad}, we deduce $p_h=0$ from the fact $\divv\pmb{f}=0$. Therefore, the solution $\pmb{u}_h\in\pmb{U}_{h}$ satisfies
\begin{align}
A_{\varepsilon,h}(\pmb{u}_h,\pmb{v}_h)=(\pmb{f},\pmb{v}_h),\quad\forall\pmb{v}_h\in\pmb{U}_{h}.\label{equationAehfh}
\end{align}
\end{remark}

We define the following norms associated with problem \eqref{discreteSPquad}
\begin{align}
\fanshu[gc,h]{\pmb{v}}^2&=\sum_{K\in\mesh}\fanshu[0,K]{\nabla\curl\pmb{v}}^2+\sum_{F\in\face^b}h_F^{-1}\fanshu[0,F]{\curl\pmb{v}}^2,\\
\fanshu[\varepsilon,h]{\pmb{v}}^2&=\varepsilon^2\fanshu[gc,h]{\pmb{v}}^2+\fanshu[0]{\curl\pmb{v}}^2+\fanshu[0]{\pmb{v}}^2.\label{discreteEnergynorm}
\end{align}

The next lemma establishes the coercivity of $A_{\varepsilon,h}(\cdot,\cdot)$ on the kernel $\pmb{Z}_h$, where
\begin{align*}
\pmb{Z}_h = \{\pmb{v}_h\in\pmb{U}_{h}:(\pmb{v}_h,\nabla q_h)=0,\ \forall q_h\in W_{h,0} \}.
\end{align*}

\begin{lemma}\label{lemmaCoerAeh}
There exists a constant $\sigma_0>0$ depending only on the shape regularity of $\mesh$, such that for $\sigma\ge\sigma_0$, there holds
\begin{align}
\fanshu[\varepsilon,h]{\pmb{w}}^2\lesssim A_{\varepsilon,h}(\pmb{w},\pmb{w})\quad\forall\pmb{w}\in\pmb{Z}_h.
\end{align}
\end{lemma}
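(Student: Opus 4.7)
The plan is to split the coercivity argument into two parts: a Nitsche-type control of the perturbation term, followed by a discrete Poincaré--Friedrichs inequality on $\pmb{Z}_h$.

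\emph{Step 1: coercivity of $a_h$ on the broken seminorm.} Using the symmetry of $a_h$, I expand
$$a_h(\pmb{w},\pmb{w}) = \sum_{K\in\mesh}\fanshu[0,K]{\nabla\curl\pmb{w}}^2 - 2\sum_{F\in\face^b}\langle\tfrac{\partial}{\partial n}\curl\pmb{w},\curl\pmb{w}\rangle_F + \sigma\sum_{F\in\face^b} h_F^{-1}\fanshu[0,F]{\curl\pmb{w}}^2.$$
Since $\curl\pmb{w}|_K$ is a polynomial, the inverse trace inequality \eqref{trace4} gives $\fanshu[0,F]{\frac{\partial}{\partial n}\curl\pmb{w}}\lesssim h_F^{-1/2}\fanshu[0,K]{\nabla\curl\pmb{w}}$ with constant $C_I$ depending only on shape regularity. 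Cauchy--Schwarz and Young's inequality with parameter $\delta>0$ then yield, on each boundary face $F\subset\partial K$,
$$2|\langle\tfrac{\partial}{\partial n}\curl\pmb{w},\curl\pmb{w}\rangle_F|\le \delta\fanshu[0,K]{\nabla\curl\pmb{w}}^2+\delta^{-1}C_I^2\,h_F^{-1}\fanshu[0,F]{\curl\pmb{w}}^2.$$
Choosing $\delta=1/2$ and $\sigma_0:=2C_I^2+1$ absorbs the mixed term for any $\sigma\ge\sigma_0$, so $a_h(\pmb{w},\pmb{w})\gtrsim\norma{\pmb{w}}_{gc,h}^2$. Therefore
$$A_{\varepsilon,h}(\pmb{w},\pmb{w})=\varepsilon^2 a_h(\pmb{w},\pmb{w})+\fanshu[0]{\curl\pmb{w}}^2 \gtrsim \varepsilon^2\fanshu[gc,h]{\pmb{w}}^2+\fanshu[0]{\curl\pmb{w}}^2.$$

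\emph{Step 2: promoting to the full energy norm.} What remains is to control $\fanshu[0]{\pmb{w}}^2$ by $\fanshu[0]{\curl\pmb{w}}^2$; this is the discrete Poincaré--Friedrichs inequality on the kernel $\pmb{Z}_h$. I would argue as follows. Since $\pmb{U}_h\subset\pmb{H}_0(\mathrm{curl};\Omega)$, apply the continuous Helmholtz decomposition $\pmb{w}=\pmb{z}+\nabla\phi$ with $\phi\in H_0^1(\Omega)$ and $\pmb{z}\in\pmb{H}_0(\mathrm{curl};\Omega)\cap\pmb{H}(\mathrm{div}^0;\Omega)$, orthogonal in $L^2$. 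The continuous Friedrichs inequality on the contractible Lipschitz domain gives $\fanshu[0]{\pmb{z}}\lesssim\fanshu[0]{\curl\pmb{z}}=\fanshu[0]{\curl\pmb{w}}$. For $\nabla\phi$: since $W_{h,0}\subset H_0^1(\Omega)$, integration by parts yields $(\pmb{z},\nabla q_h)=0$ for every $q_h\in W_{h,0}$, and combining with $(\pmb{w},\nabla q_h)=0$ gives the Galerkin orthogonality $(\nabla\phi,\nabla q_h)=0$ for all $q_h\in W_{h,0}$. Using the inclusion $\nabla W_{h,0}\subset\pmb{N}_h\subset\pmb{U}_h$ and a quasi-interpolant $q_h\in W_{h,0}$ of $\phi$, together with the commuting-diagram property \eqref{Jhdiagram} of $\mathcal{J}_h^N$, $\mathcal{J}_h^M$, one obtains $\fanshu[0]{\nabla\phi}\lesssim\fanshu[0]{\curl\pmb{w}}$. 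Summing yields $\fanshu[0]{\pmb{w}}\lesssim\fanshu[0]{\curl\pmb{w}}$.

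\emph{Conclusion.} Combining the two steps,
$$A_{\varepsilon,h}(\pmb{w},\pmb{w})\gtrsim \varepsilon^2\fanshu[gc,h]{\pmb{w}}^2+\tfrac12\fanshu[0]{\curl\pmb{w}}^2+\tfrac12\fanshu[0]{\curl\pmb{w}}^2\gtrsim \varepsilon^2\fanshu[gc,h]{\pmb{w}}^2+\fanshu[0]{\curl\pmb{w}}^2+\fanshu[0]{\pmb{w}}^2=\fanshu[\varepsilon,h]{\pmb{w}}^2,$$
as desired. The first step is routine Nitsche bookkeeping and only pins down the threshold $\sigma_0$. The main obstacle is the second step: the discrete Friedrichs inequality on $\pmb{Z}_h$ is not automatic because $\pmb{U}_h$ is nonconforming in $\pmb{H}(\mathrm{grad}\ \mathrm{curl})$ and strictly larger than $\pmb{N}_h$. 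The key ingredients to overcome this are $\pmb{U}_h\subset\pmb{H}_0(\mathrm{curl};\Omega)$, $\nabla W_{h,0}\subset\pmb{N}_h$, and the smoothing operators $\mathcal{J}_h^N,\mathcal{J}_h^M$ together with their commuting diagram, which connect $\pmb{Z}_h$ to the continuous Helmholtz decomposition.
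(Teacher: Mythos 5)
Your Step 1 reproduces the paper's argument essentially verbatim: expand $a_h(\pmb{w},\pmb{w})$, bound the two Nitsche consistency terms by a scaled trace/inverse inequality and Young's inequality, and absorb them for $\sigma\ge\sigma_0$ with $\sigma_0$ depending only on shape regularity. That part is correct.

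The problem is Step 2. The paper does not prove the discrete Friedrichs inequality $\fanshu[0]{\pmb{w}}\lesssim\fanshu[0]{\curl\pmb{w}}$ on $\pmb{Z}_h$ at all --- it cites it from \cite[Lemma 4.1]{zhang2022anew} --- whereas you attempt to derive it, and the derivation does not close. From the Helmholtz splitting $\pmb{w}=\pmb{z}+\nabla\phi$ and the orthogonality $(\nabla\phi,\nabla q_h)=0$ for all $q_h\in W_{h,0}$ you can conclude only the identity $\fanshu[0]{\nabla\phi}=\inf_{q_h\in W_{h,0}}\fanshu[0]{\nabla(\phi-q_h)}$, which carries no quantitative information; since $\curl\nabla\phi=0$, nothing in your chain ties $\nabla\phi$ to $\curl\pmb{w}$, and the phrase ``together with the commuting-diagram property one obtains $\fanshu[0]{\nabla\phi}\lesssim\fanshu[0]{\curl\pmb{w}}$'' is exactly the step that needs a proof. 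The standard mechanism (Hiptmair/Monk) is to build a discrete $\pmb{z}_h$ with $\curl\pmb{z}_h=\curl\pmb{w}$, so that $\pmb{w}-\pmb{z}_h=\nabla\xi_h$ is an exact discrete gradient with $\xi_h\in W_{h,0}$; testing the discrete divergence constraint with $\nabla\xi_h$ then gives $\fanshu[0]{\nabla\xi_h}\le\fanshu[0]{\pmb{z}-\pmb{z}_h}$, which is controlled through the embedding of $\pmb{H}_0(\mathrm{curl};\Omega)\cap\pmb{H}(\mathrm{div}^0;\Omega)$ into $\pmb{H}^s(\Omega)$, $s>1/2$. That construction is not automatic for the present space: taking $\pmb{z}_h=\mathcal{J}_h^N\pmb{z}$ yields $\curl\pmb{z}_h=\mathcal{J}_h^M\curl\pmb{w}$, which differs from $\curl\pmb{w}$ because $\curl\pmb{U}_h\not\subset\pmb{M}_h$ (the bubble enrichment $b_K\pmb{Q}^{k-1}(K)$ raises the polynomial degree beyond $k$), so $\pmb{w}-\mathcal{J}_h^N\pmb{z}$ need not be curl-free. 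Supplying this missing piece is precisely the content of the cited lemma; either invoke it, as the paper does, or carry out the construction in full.
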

\begin{proof}
According to \cite[Lemma4.1]{zhang2022anew}, the following discrete Friedrichs inequality holds
\begin{align*}
\fanshu[0]{\pmb{w}}\lesssim\fanshu[0]{\curl\pmb{w}},\ \forall\pmb{w}\in\pmb{Z}_{h}.
\end{align*}
As a result, we have
\begin{align*}
\fanshu[\varepsilon,h]{\pmb{w}}^2\lesssim 2(\fanshu[0]{\curl\pmb{w}}^2+\varepsilon^2\fanshu[gc,h]{\pmb{w}}^2).
\end{align*}
Therefore it suffices to show that the bilinear form $a_h(\cdot,\cdot)$ is coercive
with respect to the norm $\fanshu[gc,h]{\pmb{w}}$. By \eqref{definitionah}, we obtain
\begin{align}\label{ahww}
a_h(\pmb{w},\pmb{w})&=\sum_{K\in\mesh}|\curl\pmb{w}|_{1,K}^2-2\sum_{F\in\face^b}\langle\frac{\partial}{\partial n}\curl\pmb{w},\curl\pmb{w}\rangle_F\notag\\
&+\sigma\sum_{F\in\face^b}h_F^{-1}\fanshu[0,F]{\curl\pmb{w}}^2.
\end{align}
By the Cauchy-Schwarz inequality and a standard scaling argument, there exists a constant $C>0$ depending only on the shape regularity of the mesh such that
\begin{align*}
2&\sum_{F\in\face^b}\langle\frac{\partial}{\partial n}\curl\pmb{w},\curl\pmb{w}\rangle_F\\
&\le2\left(\sum_{F\in\face^b}h_F\fanshu[0,F]{\frac{\partial}{\partial n}\curl\pmb{w}}^2\right)^{1/2}\left(\sum_{F\in\face^b}h_F^{-1}\fanshu[0,F]{\curl\pmb{w}}^2\right)^{1/2}\\
&\le C\left(\sum_{F\in\face^b}h_F\banfan[1,\mathcal{T}_F]{\curl\pmb{w}}^2\right)^{1/2}\left(\sum_{F\in\face^b}h_F^{-1}\fanshu[0,F]{\curl\pmb{w}}^2\right)^{1/2}\\
&\le \frac{1}{2}\sum_{T\in\mesh}\banfan[1,K]{\curl\pmb{w}}^2+\frac{C^2}{2}\sum_{F\in\face^b}h_F^{-1}\fanshu[0,F]{\curl\pmb{w}}.
\end{align*}
Using the above estimate and \eqref{ahww} we get
\begin{align*}
a_h(\pmb{w},\pmb{w})\ge\frac{1}{2}\sum_{K\in\mesh}\banfan[1,K]{\curl\pmb{w}}^2+\left(\sigma-\frac{C^2}{2}\right)\sum_{F\in\face^b}h_F^{-1}\fanshu[0,F]{\curl\pmb{w}}^2.
\end{align*}
Choosing $\sigma_0=\frac{1}{2}(C^2+1)$, we derive the desired estimate.
\end{proof}

The next lemma shows that the finite element method \eqref{discreteSPquad} is well-posed.
\begin{lemma}
We have the discrete stability
\begin{align}\label{discreteStability}
&\fanshu[\varepsilon,h]{\widetilde{\pmb{u}}_h}+|\widetilde{p}_h|_1\lesssim\sup_{(\pmb{v}_h,q_h)\in\pmb{U}_{h}\times W_{h,0}}\frac{A_{\varepsilon,h}(\widetilde{\pmb{u}}_h,\pmb{v}_h)+b(\pmb{v}_h,\widetilde{p}_h)+b(\widetilde{\pmb{u}}_h,q_h)}{\fanshu[\varepsilon,h]{\pmb{v}_h}+|q_h|_1}
\end{align}
for any $(\widetilde{\pmb{u}}_h,\widetilde{p}_h)\in\pmb{U}_{h}\times W_{h,0}$.
\end{lemma}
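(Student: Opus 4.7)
The plan is to read \eqref{discreteStability} as the standard discrete inf-sup stability for the mixed problem \eqref{discreteSPquad} and derive it from the Brezzi framework. Concretely, it suffices to verify: (a) coercivity of $A_{\varepsilon,h}$ on the discrete kernel $\pmb{Z}_h$ with respect to $\fanshu[\varepsilon,h]{\cdot}$, and (b) a discrete inf-sup condition for $b(\pmb{v}_h,q_h):=(\pmb{v}_h,\nabla q_h)$ on $\pmb{U}_h\times W_{h,0}$ relating $\fanshu[\varepsilon,h]{\cdot}$ and $|\cdot|_1$. Condition (a) is precisely Lemma~\ref{lemmaCoerAeh}, so the essential work reduces to (b) together with the standard Brezzi assembly.

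For (b), the key structural observation is the inclusion $\nabla W_{h,0}\subset\pmb{U}_h$. Indeed, for any $q_h\in W_{h,0}\subset H_0^1(\Omega)$, the gradient $\nabla q_h$ is piecewise in $\pmb{P}^k\subset\pmb{N}^k\subset\pmb{U}^k$, is tangentially continuous across interior faces, vanishes tangentially on $\partial\Omega$ because $q_h\in H_0^1$, and trivially satisfies the jump condition in \eqref{definitionUh} since $\curl\nabla q_h=0$. Testing with $\pmb{v}_h=\nabla q_h$ gives $b(\nabla q_h,q_h)=|q_h|_1^2$, while $\curl\nabla q_h=0$ makes all but the $\fanshu[0]{\nabla q_h}^2$ term vanish in $\fanshu[\varepsilon,h]{\nabla q_h}^2$, so $\fanshu[\varepsilon,h]{\nabla q_h}=|q_h|_1$. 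Hence the inf-sup holds with constant one.

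The assembly then follows the textbook template. Given $(\widetilde{\pmb{u}}_h,\widetilde{p}_h)$, use (b) to produce $\pmb{u}_h^\perp\in\pmb{U}_h$ satisfying $b(\pmb{u}_h^\perp,q_h)=b(\widetilde{\pmb{u}}_h,q_h)$ for all $q_h$ and $\fanshu[\varepsilon,h]{\pmb{u}_h^\perp}\lesssim\sup_{q_h}b(\widetilde{\pmb{u}}_h,q_h)/|q_h|_1$; then $\pmb{u}_h^Z:=\widetilde{\pmb{u}}_h-\pmb{u}_h^\perp\in\pmb{Z}_h$, and testing $\pmb{v}_h=\pmb{u}_h^Z$ in the right-hand side of \eqref{discreteStability} (which kills the $b(\pmb{v}_h,\widetilde{p}_h)$ contribution) together with Lemma~\ref{lemmaCoerAeh} and continuity of $A_{\varepsilon,h}$ controls $\fanshu[\varepsilon,h]{\pmb{u}_h^Z}$, and thus $\fanshu[\varepsilon,h]{\widetilde{\pmb{u}}_h}$, by the supremum. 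For the pressure, take $\pmb{v}_h=\nabla\widetilde{p}_h\in\pmb{U}_h$: since $\curl\nabla\widetilde{p}_h=0$ annihilates every term of $A_{\varepsilon,h}(\widetilde{\pmb{u}}_h,\nabla\widetilde{p}_h)$, only $b(\nabla\widetilde{p}_h,\widetilde{p}_h)=|\widetilde{p}_h|_1^2$ survives, and combined with $\fanshu[\varepsilon,h]{\nabla\widetilde{p}_h}=|\widetilde{p}_h|_1$ this yields $|\widetilde{p}_h|_1$ bounded by the supremum as well.

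The only mildly nontrivial point is carefully justifying the inclusion $\nabla W_{h,0}\subset\pmb{U}_h$, which is precisely what makes the discrete inf-sup trivial with constant one; once that is granted, no further technical difficulty arises and the proof is the usual Brezzi argument.
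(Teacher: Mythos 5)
Your proposal is correct and follows essentially the same route as the paper: the inclusion $\nabla W_{h,0}\subset\pmb{U}_{h,0}\subset\pmb{U}_h$ gives the inf-sup condition with the test function $\nabla q_h$ (for which $\fanshu[\varepsilon,h]{\nabla q_h}=|q_h|_1$ since $\curl\nabla q_h=0$), and combined with the kernel coercivity of Lemma \ref{lemmaCoerAeh} the bound follows from the Babu\v{s}ka--Brezzi theory. The paper simply cites that theory where you spell out the standard assembly (kernel/complement splitting plus the direct pressure bound via $\pmb{v}_h=\nabla\widetilde{p}_h$), which is fine.
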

\begin{proof}
Since $\nabla q_h\in \pmb{U}_{h,0},\ \forall q_h\in W_{h,0}$, we have the inf-sup condition
\begin{align*}
\sup_{\pmb{0}\neq\pmb{v}_h\in\pmb{U}_h}\frac{b(\pmb{v}_h,q_h)}{\fanshu[\varepsilon,h]{\pmb{v}_h}}\ge \frac{b(\nabla q_h,q_h)}{\fanshu[\varepsilon,h]{\nabla q_h}} \gtrsim |q_h|_1 \quad\forall q_h\in W_{h,0}.
\end{align*}
As a result, the discrete stability follows from Lemma \ref{lemmaCoerAeh}, the above inf-sup condition and the Babu\v{s}ka-Brezzi theory \cite{boffi2013mixed}.
\end{proof}

Before proceeding further, we need some technical lemmas. The following lemma gives the interpolation error in the norm defined by \eqref{discreteEnergynorm}.
\begin{lemma}\label{lemmaerrepshnorm}
Let $\pmb{u}$ be the solution to \eqref{SPcurlLapcurlproblem} satisfying $\pmb{u}\in \pmb{H}^s(\Omega)$ and $\curl\pmb{u}\in \pmb{H}^s(\Omega)$ with $1\le s\le k+1$, and let $\pmb{I}_h$ be the operator given in Lemma \ref{lemmaIh}. Then we have
\begin{align}
\fanshu[\varepsilon,h]{\pmb{u}-\pmb{I}_h\pmb{u}}&\lesssim(h^s+\varepsilon h^{s-1})(\fanshu[s]{\pmb{u}}+\fanshu[s]{\curl\pmb{u}}),\label{errepshnorm1}\\
\fanshu[\varepsilon,h]{\pmb{u}-\pmb{I}_h\pmb{u}}&\lesssim h^{1/2}\fanshu[0]{\pmb{f}}.\label{errepshnorm2}
\end{align}
Moreover, if the solution to the reduced problem satisfies $\curl\overline{\pmb{u}}\in \pmb{H}^m(\Omega)$ with $1\le m\le k+1$
\begin{align}
\fanshu[\varepsilon,h]{\pmb{u}-\pmb{I}_h\pmb{u}}\lesssim\varepsilon^{1/2}\fanshu[0]{\pmb{f}}+h^m(\fanshu[m]{\overline{\pmb{u}}}+\fanshu[m]{\curl\overline{\pmb{u}}}).\label{errepshnorm3}
\end{align}
\end{lemma}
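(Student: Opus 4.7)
The plan is to expand $\fanshu[\varepsilon,h]{\pmb{u}-\pmb{I}_h\pmb{u}}^2$ into its four ingredients---the $\pmb{L}^2$-norm, the $\pmb{L}^2$-norm of the curl, the broken $\pmb{H}^1$-seminorm $\sum_{K\in\mesh}\fanshu[0,K]{\nabla\curl(\pmb{u}-\pmb{I}_h\pmb{u})}^2$, and the boundary-face sum $\sum_{F\in\face^b}h_F^{-1}\fanshu[0,F]{\curl(\pmb{u}-\pmb{I}_h\pmb{u})}^2$---and to bound each separately using Lemma~\ref{lemmaIh}, the trace inequality \eqref{trace3}, and (for the second and third estimates) the \textit{a priori} bounds of Lemma~2.1. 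For \eqref{errepshnorm1} the first three ingredients are dispatched immediately by \eqref{estimateIh1} and by \eqref{estimateIh2} applied with $m=0$ and $m=1$, while the boundary-face sum is controlled by applying the element-wise trace inequality \eqref{trace3} to $\curl(\pmb{u}-\pmb{I}_h\pmb{u})$, which reduces it to the broken $\pmb{L}^2$- and $\pmb{H}^1$-seminorms of $\curl(\pmb{u}-\pmb{I}_h\pmb{u})$ already handled above. Collecting the pieces gives $\fanshu[\varepsilon,h]{\pmb{u}-\pmb{I}_h\pmb{u}}^2\lesssim (h^{2s}+\varepsilon^2 h^{2s-2})(\fanshu[s]{\pmb{u}}+\fanshu[s]{\curl\pmb{u}})^2$, which is equivalent to $(h^s+\varepsilon h^{s-1})^2(\fanshu[s]{\pmb{u}}+\fanshu[s]{\curl\pmb{u}})^2$ and proves \eqref{errepshnorm1}.

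For \eqref{errepshnorm3} I would split into the regimes $\varepsilon>h$ and $\varepsilon\le h$. In the regime $\varepsilon>h$, the estimate \eqref{errepshnorm1} with $s=2$ together with the \textit{a priori} bound \eqref{regularitySP1} already yields $\fanshu[\varepsilon,h]{\pmb{u}-\pmb{I}_h\pmb{u}}\lesssim(h^2+\varepsilon h)\varepsilon^{-3/2}\fanshu[0]{\pmb{f}}\lesssim \varepsilon^{1/2}\fanshu[0]{\pmb{f}}$, which is absorbed into the right-hand side of \eqref{errepshnorm3}. In the regime $\varepsilon\le h$, I would decompose $\pmb{u}-\pmb{I}_h\pmb{u}=(\overline{\pmb{u}}-\pmb{I}_h\overline{\pmb{u}})+\bigl((\pmb{u}-\overline{\pmb{u}})-\pmb{I}_h(\pmb{u}-\overline{\pmb{u}})\bigr)$ and bound the two summands separately. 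The regular piece $\overline{\pmb{u}}-\pmb{I}_h\overline{\pmb{u}}$ is estimated by the argument of the previous paragraph with $\pmb{u}\leftarrow\overline{\pmb{u}}$ and $s\leftarrow m$, contributing $(h^m+\varepsilon h^{m-1})(\fanshu[m]{\overline{\pmb{u}}}+\fanshu[m]{\curl\overline{\pmb{u}}})$; the extra factor $\varepsilon h^{m-1}$ is absorbed into $h^m$ thanks to $\varepsilon\le h$. The boundary-layer piece $(\pmb{u}-\overline{\pmb{u}})-\pmb{I}_h(\pmb{u}-\overline{\pmb{u}})$ is treated via Lemma~\ref{lemmaIh} in its stability form ($s=0$) for the $\pmb{L}^2$- and $\pmb{L}^2$-curl parts, and with $s=1$ for the broken $\pmb{H}^1$-curl part, combined with the \textit{a priori} estimates $\fanshu[0]{\pmb{u}-\overline{\pmb{u}}}\lesssim\varepsilon\fanshu[0]{\pmb{f}}$, $\fanshu[0]{\curl(\pmb{u}-\overline{\pmb{u}})}\lesssim\varepsilon^{1/2}\fanshu[0]{\pmb{f}}$ from \eqref{regularitySP2}--\eqref{regularitySP3} and the bound $\fanshu[1]{\curl\pmb{u}}\lesssim\varepsilon^{-1/2}\fanshu[0]{\pmb{f}}$ obtained inside the proof of Lemma~2.1. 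The principal technical obstacle is the $\varepsilon^2\fanshu[gc,h]{\cdot}^2$ piece of the boundary-layer term: since $\fanshu[1]{\curl(\pmb{u}-\overline{\pmb{u}})}$ scales as $\varepsilon^{-1/2}$ and the trace step introduces an extra $\varepsilon^3 h^{-2}\fanshu[0]{\pmb{f}}^2$ contribution, the powers of $\varepsilon$ and $h$ must be tracked delicately; however the assumption $\varepsilon\le h$ makes $\varepsilon^3 h^{-2}\le\varepsilon$ and everything collapses to $\varepsilon\fanshu[0]{\pmb{f}}^2$, giving $\varepsilon^{1/2}\fanshu[0]{\pmb{f}}$ after taking square roots.

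Finally, \eqref{errepshnorm2} is deduced by splitting into the regimes $\varepsilon\ge h$ and $\varepsilon<h$ in the same spirit. In the former, \eqref{errepshnorm1} with $s=2$ combined with $\fanshu[2]{\pmb{u}}+\varepsilon\fanshu[2]{\curl\pmb{u}}\lesssim\varepsilon^{-1/2}\fanshu[0]{\pmb{f}}$ from \eqref{regularitySP1} gives $\fanshu[\varepsilon,h]{\pmb{u}-\pmb{I}_h\pmb{u}}\lesssim(h^2+\varepsilon h)\varepsilon^{-3/2}\fanshu[0]{\pmb{f}}\lesssim h\varepsilon^{-1/2}\fanshu[0]{\pmb{f}}\le h^{1/2}\fanshu[0]{\pmb{f}}$. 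In the latter, \eqref{errepshnorm3} with $m=1$ combined with \eqref{regularityreduced} yields $\fanshu[\varepsilon,h]{\pmb{u}-\pmb{I}_h\pmb{u}}\lesssim\varepsilon^{1/2}\fanshu[0]{\pmb{f}}+h\fanshu[0]{\pmb{f}}\lesssim h^{1/2}\fanshu[0]{\pmb{f}}$.
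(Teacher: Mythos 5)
Your proposal is correct, and for \eqref{errepshnorm1} and \eqref{errepshnorm3} it follows essentially the same path as the paper: term-by-term bounds on the four pieces of $\fanshu[\varepsilon,h]{\cdot}$, the splitting $\pmb{u}-\pmb{I}_h\pmb{u}=(\overline{\pmb{u}}-\pmb{I}_h\overline{\pmb{u}})+\bigl((\pmb{u}-\overline{\pmb{u}})-\pmb{I}_h(\pmb{u}-\overline{\pmb{u}})\bigr)$, the $s=0$ stability of $\pmb{I}_h$ on the layer part, and the \textit{a priori} bounds \eqref{regularitySP1}--\eqref{regularitySP3}. Where you genuinely diverge is \eqref{errepshnorm2}: the paper proves it directly and uniformly in $\varepsilon$ by geometric-mean interpolation of two approximation estimates of different order, e.g.\ $\varepsilon^2\sum_K\fanshu[0,K]{\nabla\curl(\pmb{u}-\pmb{I}_h\pmb{u})}^2\lesssim\varepsilon^2 h\fanshu[2]{\pmb{u}}\fanshu[2]{\curl\pmb{u}}\lesssim h\fanshu[0]{\pmb{f}}^2$ and $h\fanshu[0]{\curl(\pmb{u}-\overline{\pmb{u}})}\fanshu[1]{\curl(\pmb{u}-\overline{\pmb{u}})}\lesssim h\fanshu[0]{\pmb{f}}^2$, with no case distinction; you instead deduce \eqref{errepshnorm2} as a corollary of \eqref{errepshnorm1} (regime $\varepsilon\ge h$, with $s=2$ and \eqref{regularitySP1}) and of \eqref{errepshnorm3} with $m=1$ (regime $\varepsilon<h$, with \eqref{regularityreduced}). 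Both routes are valid; your case-split is more elementary and makes the mechanism transparent (the two regimes meet exactly at the $h^{1/2}$ rate), at the cost of requiring \eqref{errepshnorm3} to be established first and of invoking the full $H^2$ regularity of $\pmb{u}$ and $\curl\pmb{u}$, while the paper's multiplicative bounds produce each estimate independently and are closer in spirit to the $J_3$ analysis reused later in Lemma \ref{lemmaAeh}. Your handling of the delicate gradient-curl piece of the layer term, where $\varepsilon^3h^{-2}\le\varepsilon$ under $\varepsilon\le h$, is exactly the point that must be checked, and it closes correctly.
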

\begin{proof}
Using \eqref{estimateIh2} and \eqref{regularitySP1} we obtain
\begin{align}
\varepsilon^2\sum_{K\in\mesh}\fanshu[0,K]{\nabla\curl(\pmb{u}-\pmb{I}_h\pmb{u})}^2&\lesssim \varepsilon^2h^{2s-2}\fanshu[s]{\curl\pmb{u}}^2,\label{errgcnorm1}\\
\varepsilon^2\sum_{K\in\mesh}\fanshu[0,K]{\nabla\curl(\pmb{u}-\pmb{I}_h\pmb{u})}^2&\lesssim \varepsilon^2h\fanshu[2]{\pmb{u}}\fanshu[2]{\curl\pmb{u}}\lesssim h\fanshu[0]{\pmb{f}}^2,\label{errgcnorm2}\\
\varepsilon^2\sum_{K\in\mesh}\fanshu[0,K]{\nabla\curl(\pmb{u}-\pmb{I}_h\pmb{u})}^2&\lesssim\varepsilon^2\fanshu[2]{\pmb{u}}^2\lesssim \varepsilon\fanshu[0]{\pmb{f}}^2.\label{errgcnorm3}
\end{align}

Similarly, we use \eqref{estimateIh2}, \eqref{regularitySP1} and \eqref{trace3} to get
\begin{align}
\varepsilon^2&\sum_{F\in\face^b}h_F^{-1}\fanshu[0,F]{\curl(\pmb{u}-\pmb{I}_h\pmb{u})}^2\notag\\
&\lesssim\varepsilon^2\sum_{K\in\mesh}\left(h_K^{-2}\fanshu[0,K]{\curl(\pmb{u}-\pmb{I}_h\pmb{u})}+\fanshu[0,K]{\nabla\curl(\pmb{u}-\pmb{I}_h\pmb{u})}\right)\notag\\
&\lesssim \varepsilon^2h^{2s-2}\fanshu[s]{\curl\pmb{u}}^2,\label{errfacecurl1}
\end{align}
\begin{align}
\varepsilon^2&\sum_{F\in\face^b}h_F^{-1}\fanshu[0,F]{\curl(\pmb{u}-\pmb{I}_h\pmb{u})}^2\notag\\
&\lesssim\varepsilon^2\sum_{K\in\mesh}\left(h_K^{-2}\fanshu[0,K]{\curl(\pmb{u}-\pmb{I}_h\pmb{u})}^2+\fanshu[0,K]{\nabla\curl(\pmb{u}-\pmb{I}_h\pmb{u})}^2\right)\notag\\
&\lesssim \varepsilon^2h\fanshu[2]{\pmb{u}}\fanshu[2]{\curl\pmb{u}}\lesssim h\fanshu[0]{\pmb{f}}^2,\label{errfacecurl2}
\end{align}
and
\begin{align}
\varepsilon^2&\sum_{F\in\face^b}h_F^{-1}\fanshu[0,F]{\curl(\pmb{u}-\pmb{I}_h\pmb{u})}^2\notag\\
&\lesssim\varepsilon^2\sum_{K\in\mesh}\left(h_K^{-2}\fanshu[0,K]{\curl(\pmb{u}-\pmb{I}_h\pmb{u})}+\fanshu[0,K]{\nabla\curl(\pmb{u}-\pmb{I}_h\pmb{u})}\right)\notag\\
&\lesssim\varepsilon^2\fanshu[2]{\pmb{u}}^2\lesssim \varepsilon\fanshu[0]{\pmb{f}}^2.\label{errfacecurl3}
\end{align}

Applying \eqref{estimateIh2}, \eqref{regularitySP1}, \eqref{regularitySP2} and \eqref{regularityreduced}, we have
\begin{align}
\fanshu[0]{\curl(\pmb{u}-\pmb{I}_h\pmb{u})}^2&\lesssim h^{2s}\fanshu[s]{\curl\pmb{u}}^2,\label{errcurlnorm1}\\
\fanshu[0]{\curl(\pmb{u}-\pmb{I}_h\pmb{u})}^2&\lesssim \fanshu[0]{\curl(\pmb{u}-\overline{\pmb{u}})-\curl\pmb{I}_h(\pmb{u}-\overline{\pmb{u}})}^2+\fanshu[0]{\curl(\overline{\pmb{u}}-\pmb{I}_h\overline{\pmb{u}})}^2\notag\\
&\lesssim h\fanshu[0]{\curl(\pmb{u}-\overline{\pmb{u}})}\fanshu[1]{\curl(\pmb{u}-\overline{\pmb{u}})}+h^2\fanshu[1]{\curl\overline{\pmb{u}}}^2\notag\\
&\lesssim h\fanshu[0]{\pmb{f}}^2,\label{errcurlnorm2}
\end{align}
and
\begin{align}
\fanshu[0]{\curl(\pmb{u}-\pmb{I}_h\pmb{u})}^2&\lesssim \fanshu[0]{\curl(\pmb{u}-\overline{\pmb{u}})-\curl\pmb{I}_h(\pmb{u}-\overline{\pmb{u}})}^2+\fanshu[0]{\curl(\overline{\pmb{u}}-\pmb{I}_h\overline{\pmb{u}})}^2\notag\\
&\lesssim\fanshu[0]{\curl(\pmb{u}-\overline{\pmb{u}})-\curl\pmb{I}_h(\pmb{u}-\overline{\pmb{u}})}^2+h^{2m}\fanshu[m]{\curl\overline{\pmb{u}}}^2\notag\\
&\lesssim\varepsilon\fanshu[0]{\pmb{f}}^2+h^{2m}\fanshu[m]{\curl\overline{\pmb{u}}}^2.\label{errcurlnorm3}
\end{align}

Using \eqref{estimateIh1} and \eqref{regularitySP1}-\eqref{regularityreduced}, we have
\begin{align}
\fanshu[0]{\pmb{u}-\pmb{I}_h}&\lesssim h^{2s}(\fanshu[s]{\pmb{u}}+\fanshu[s]{\curl\pmb{u}})\label{errL2norm1}\\
\fanshu[0]{\pmb{u}-\pmb{I}_h}&\lesssim \fanshu[0]{\pmb{u}-\overline{\pmb{u}}-\pmb{I}_h(\pmb{u}-\overline{\pmb{u}})}^2+\fanshu[0]{\overline{\pmb{u}}-\pmb{I}_h\overline{\pmb{u}}}^2\notag\\
&\lesssim h(\fanshu[0]{\pmb{u}-\overline{\pmb{u}}}+\fanshu[0]{\curl(\pmb{u}-\overline{\pmb{u}})})(\fanshu[1]{\pmb{u}-\overline{\pmb{u}}}+\fanshu[1]{\curl(\pmb{u}-\overline{\pmb{u}})})\notag\\
&+h^2(\fanshu[1]{\overline{\pmb{u}}}+\fanshu[1]{\curl\overline{\pmb{u}}})^2\notag\\
&\lesssim h\fanshu[0]{\pmb{f}}^2\label{errL2norm2}
\end{align}
and
\begin{align}
\fanshu[0]{\pmb{u}-\pmb{I}_h}&\lesssim \fanshu[0]{\pmb{u}-\overline{\pmb{u}}-\pmb{I}_h(\pmb{u}-\overline{\pmb{u}})}^2+\fanshu[0]{\overline{\pmb{u}}-\pmb{I}_h\overline{\pmb{u}}}^2\notag\\
&\lesssim(\fanshu[0]{\pmb{u}-\overline{\pmb{u}}}+\fanshu[0]{\curl(\pmb{u}-\overline{\pmb{u}})})^2+h^{2m}(\fanshu[m]{\overline{\pmb{u}}}+\fanshu[m]{\curl\overline{\pmb{u}}})^2\notag\\
&\lesssim \varepsilon\fanshu[0]{\pmb{f}}^2++\fanshu[0]{\curl(\pmb{u}-\overline{\pmb{u}})})^2+h^{2m}(\fanshu[m]{\overline{\pmb{u}}}+\fanshu[m]{\curl\overline{\pmb{u}}})^2.\label{errL2norm3}
\end{align}

Finally, the estimate \eqref{errepshnorm1} can be obtained by \eqref{discreteEnergynorm}, \eqref{errgcnorm1}, \eqref{errfacecurl1}, \eqref{errcurlnorm1} and \eqref{errL2norm1}; the estimate \eqref{errepshnorm2} can be obtained by \eqref{discreteEnergynorm}, \eqref{errgcnorm2}, \eqref{errfacecurl2}, \eqref{errcurlnorm2} and \eqref{errL2norm2}; and the estimate \eqref{errepshnorm3} can be obtained by \eqref{discreteEnergynorm}, \eqref{errgcnorm3}, \eqref{errfacecurl3}, \eqref{errcurlnorm3} and \eqref{errL2norm3}.
\end{proof}

The next lemma provides bounds for the interpolation error in the bilinear form restricted to the subspace.
\begin{lemma}\label{lemmaAeh}
Under the assumption of Lemma \ref{lemmaerrepshnorm}, for any $\pmb{w}\in\pmb{U}_h$ we have
\begin{align}
A_{\varepsilon,h}(\pmb{u}-\pmb{I}_h\pmb{u},\pmb{w})&\lesssim(1+\sigma)(\varepsilon h^{s-1}+h^s)(\fanshu[s]{\pmb{u}}+\fanshu[s]{\curl\pmb{u}})\fanshu[\varepsilon,h]{\pmb{w}},\label{errAepsh1}\\
A_{\varepsilon,h}(\pmb{u}-\pmb{I}_h\pmb{u},\pmb{w})&\lesssim(1+\sigma)h^{1/2}\fanshu[0]{\pmb{f}}\fanshu[\varepsilon,h]{\pmb{w}},\label{errAepsh2}\\
A_{\varepsilon,h}(\pmb{u}-\pmb{I}_h\pmb{u},\pmb{w})&\lesssim(1+\sigma)(\varepsilon^{1/2} \fanshu[0]{\pmb{f}}+h^m(\fanshu[m]{\overline{\pmb{u}}}+\fanshu[m]{\curl\overline{\pmb{u}}}))\fanshu[\varepsilon,h]{\pmb{w}}.\label{errAepsh3}
\end{align}
\end{lemma}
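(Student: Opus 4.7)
The plan is to decompose $A_{\varepsilon,h}(\pmb{u}-\pmb{I}_h\pmb{u},\pmb{w})$ into the five terms dictated by \eqref{definitionAepsh}--\eqref{definitionah}, apply Cauchy--Schwarz to each, and use an appropriate trace or inverse-trace inequality so that every piece is bounded by the product of $\fanshu[\varepsilon,h]{\pmb{w}}$ with a quantity that matches one of the right-hand sides of Lemma \ref{lemmaerrepshnorm}. Since Lemma \ref{lemmaerrepshnorm} has already delivered those three bounds for $\fanshu[\varepsilon,h]{\pmb{u}-\pmb{I}_h\pmb{u}}$, the three conclusions \eqref{errAepsh1}--\eqref{errAepsh3} will follow in parallel; the $(1+\sigma)$ prefactor arises solely from the penalty term.

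The broken volume contribution, the penalty term, and the $L^2$ curl--curl term are immediate via Cauchy--Schwarz and the definition \eqref{discreteEnergynorm} of $\fanshu[\varepsilon,h]{\cdot}$. The first Nitsche consistency term, $\varepsilon^2\sum_{F\in\face^b}\langle\frac{\partial}{\partial n}\curl\pmb{w},\curl(\pmb{u}-\pmb{I}_h\pmb{u})\rangle_F$, is handled by the discrete trace inequality \eqref{trace4}: since $\curl\pmb{w}$ is piecewise polynomial, $h_F^{1/2}\fanshu[0,F]{\tfrac{\partial}{\partial n}\curl\pmb{w}}\lesssim\fanshu[0,K_F]{\nabla\curl\pmb{w}}$, which paired with $h_F^{-1/2}\fanshu[0,F]{\curl(\pmb{u}-\pmb{I}_h\pmb{u})}$ and a second Cauchy--Schwarz produces a bound by the product of the two energy norms.

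The main obstacle is the symmetric consistency term $\varepsilon^2\sum_{F\in\face^b}\langle\frac{\partial}{\partial n}\curl(\pmb{u}-\pmb{I}_h\pmb{u}),\curl\pmb{w}\rangle_F$, where the normal derivative lands on the non-polynomial error and no inverse estimate is available. I would apply the trace inequality \eqref{trace2} to obtain, on each boundary element $K$,
\[
h_F\fanshu[0,F]{\tfrac{\partial}{\partial n}\curl(\pmb{u}-\pmb{I}_h\pmb{u})}^2\lesssim\fanshu[0,K]{\nabla\curl(\pmb{u}-\pmb{I}_h\pmb{u})}^2+h_K\fanshu[0,K]{\nabla\curl(\pmb{u}-\pmb{I}_h\pmb{u})}\banfan[2,K]{\curl(\pmb{u}-\pmb{I}_h\pmb{u})}.
\]
The first summand is already part of $\fanshu[gc,h]{\pmb{u}-\pmb{I}_h\pmb{u}}^2$. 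For the cross product, global Cauchy--Schwarz produces the factor $\bigl(\sum_K\fanshu[0,K]{\nabla\curl(\pmb{u}-\pmb{I}_h\pmb{u})}^2\bigr)^{1/2}\bigl(\sum_K h_K^2\banfan[2,K]{\curl(\pmb{u}-\pmb{I}_h\pmb{u})}^2\bigr)^{1/2}$, whose two factors are then controlled by the interpolation estimate \eqref{estimateIh2} with \emph{different} choices of $(s,m)$ (splitting off the discrete component by inverse estimate and, for \eqref{errAepsh3}, performing the reduced-problem splitting $\pmb{u}=(\pmb{u}-\overline{\pmb{u}})+\overline{\pmb{u}}$). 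This is exactly the product-of-two-estimates device that drives the derivations of \eqref{errgcnorm2}, \eqref{errcurlnorm2}, \eqref{errL2norm2} and their $\varepsilon$-splitting analogues in Lemma \ref{lemmaerrepshnorm}; pairing, for instance, $s=m=1$ with $s=m=2$ and invoking \eqref{regularitySP1} yields $\varepsilon^2 h\fanshu[1]{\curl\pmb{u}}\fanshu[2]{\curl\pmb{u}}\lesssim h\fanshu[0]{\pmb{f}}^2$, giving the sharp $h^{1/2}\fanshu[0]{\pmb{f}}$ contribution in \eqref{errAepsh2}. Residual volumetric pieces such as $\varepsilon^2\fanshu[0]{\nabla\curl\pmb{u}}^2=\varepsilon^2\fanshu[0]{(\curl)^2\pmb{u}}^2$ are dispatched using the regularity estimate \eqref{inequality2}. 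Collecting the five pieces together then produces the three claimed bounds with the common prefactor $(1+\sigma)$.
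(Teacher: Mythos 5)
Your decomposition into the five terms $J_1,\dots,J_5$, the treatment of the volume, penalty, curl--curl and $\langle\curl(\pmb{u}-\pmb{I}_h\pmb{u}),\partial_n\curl\pmb{w}\rangle_F$ terms via Cauchy--Schwarz, \eqref{trace4} and Lemma \ref{lemmaerrepshnorm}, and your handling of the hard term $J_3=\varepsilon^2\sum_{F\in\face^b}\langle\partial_n\curl(\pmb{u}-\pmb{I}_h\pmb{u}),\curl\pmb{w}\rangle_F$ for \eqref{errAepsh1} and \eqref{errAepsh2} are all sound. For \eqref{errAepsh2} your route (multiplicative trace inequality plus a geometric mean of two instances of \eqref{estimateIh2}) is a legitimate alternative to the paper's case split between $\varepsilon\le h$ and $h\le\varepsilon$; the arithmetic $\varepsilon^2h\fanshu[1]{\curl\pmb{u}}\fanshu[2]{\curl\pmb{u}}\lesssim h\fanshu[0]{\pmb{f}}^2$ checks out against \eqref{regularitySP1} and \eqref{inequality2}.

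The gap is in \eqref{errAepsh3}, and it is concentrated in $J_3$. Throughout, you pair the boundary integrand with the weighted quantity $\bigl(\sum_F h_F^{-1}\fanshu[0,F]{\curl\pmb{w}}^2\bigr)^{1/2}$, which sits inside $\fanshu[gc,h]{\pmb{w}}$ and therefore costs a factor $\varepsilon^{-1}$ when converted to $\fanshu[\varepsilon,h]{\pmb{w}}$. You are then left needing $\varepsilon^2\sum_Fh_F\fanshu[0,F]{\partial_n\curl(\pmb{u}-\pmb{I}_h\pmb{u})}^2\lesssim\varepsilon\fanshu[0]{\pmb{f}}^2+h^{2m}(\fanshu[m]{\overline{\pmb{u}}}+\fanshu[m]{\curl\overline{\pmb{u}}})^2$, but your multiplicative-trace bound produces the cross term $\varepsilon^2h\fanshu[1]{\curl\pmb{u}}\fanshu[2]{\curl\pmb{u}}\lesssim h\fanshu[0]{\pmb{f}}^2$, and $h\fanshu[0]{\pmb{f}}^2$ is \emph{not} dominated by $\varepsilon\fanshu[0]{\pmb{f}}^2+h^{2m}(\cdots)^2$ in the regime $\varepsilon\ll h$ --- precisely the regime where \eqref{errAepsh3} is the point of the lemma (for $\varepsilon\gtrsim h$ it already follows from \eqref{errAepsh2}). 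The reduced-problem splitting you invoke does not repair this: the $(\pmb{u}-\overline{\pmb{u}})$ piece still carries $\fanshu[2]{\curl(\pmb{u}-\overline{\pmb{u}})}\lesssim\varepsilon^{-3/2}\fanshu[0]{\pmb{f}}+\fanshu[2]{\curl\overline{\pmb{u}}}$ and regenerates the same $h^{1/2}\fanshu[0]{\pmb{f}}$ contribution. The paper's essential device, which your proposal is missing, is to pair $J_3$ differently (its estimate \eqref{estimateJ32}): apply \eqref{trace4} to the polynomial $\curl\pmb{w}$ so that the face term is controlled by the \emph{unweighted} $\fanshu[0]{\curl\pmb{w}}$, which enters $\fanshu[\varepsilon,h]{\pmb{w}}$ with coefficient one rather than $\varepsilon$; the resulting extra negative power of $h_K$ on the interpolation error is absorbed by \eqref{estimateIh2} with $s=2$, giving $J_3\lesssim\varepsilon^2\fanshu[2]{\curl\pmb{u}}\fanshu[0]{\curl\pmb{w}}\lesssim\varepsilon^{1/2}\fanshu[0]{\pmb{f}}\fanshu[\varepsilon,h]{\pmb{w}}$, exactly the form needed in \eqref{errAepsh3}.
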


\begin{proof}
It follows from \eqref{definitionAepsh} that
\begin{align}
A_{\varepsilon,h}&(\pmb{u}-\pmb{I}_h\pmb{u},\pmb{w})\notag\\
&=\varepsilon^2\sum_{K\in\mesh}(\nabla\curl(\pmb{u}-\pmb{I}_h\pmb{u}),\nabla\curl\pmb{w})_K+(\curl(\pmb{u}-\pmb{I}_h\pmb{u}),\curl\pmb{w})\notag\\
&-\varepsilon^2\sum_{F\in\face^b}\langle\frac{\partial}{\partial n}\curl(\pmb{u}-\pmb{I}_h\pmb{u}),\curl\pmb{w}\rangle_F-\varepsilon^2\sum_{F\in\face^b}\langle\curl(\pmb{u}-\pmb{I}_h\pmb{u}),\frac{\partial}{\partial n}\curl\pmb{w}\rangle_F\notag\\
&+\varepsilon^2\sigma\sum_{F\in\face^b}h_F^{-1}\langle\curl(\pmb{u}-\pmb{I}_h\pmb{u}),\curl\pmb{w}\rangle_F\notag\\
&:=J_1+J_2+J_3+J_4+J_5.\label{defJ12345}
\end{align}
Using the Cauchy-Schwarz inequality, \eqref{trace4} and \eqref{definitionAepsh}, we have
\begin{align*}
J_1,J_2,J_4,J_5\lesssim(1+\sigma)\fanshu[\varepsilon,h]{\pmb{u}-\pmb{I}_h\pmb{u}}\fanshu[\varepsilon,h]{\pmb{w}}.
\end{align*}
Then by \eqref{errepshnorm1}-\eqref{errepshnorm3} we obtain
\begin{align}
J_1,J_2,J_4,J_5&\lesssim(1+\sigma)(\varepsilon h^{s-1}+h^s)(\fanshu[s]{\pmb{u}}+\fanshu[s]{\curl\pmb{u}})\fanshu[\varepsilon,h]{\pmb{w}},\label{J1245a}\\
J_1,J_2,J_4,J_5&\lesssim(1+\sigma)h^{1/2}\fanshu[0]{\pmb{f}}\fanshu[\varepsilon,h]{\pmb{w}},\label{J1245b}
\end{align}
and
\begin{align}
J_1,J_2,J_4,J_5\lesssim(1+\sigma)(\varepsilon^{1/2} \fanshu[0]{\pmb{f}}+h^m(\fanshu[m]{\overline{\pmb{u}}}+\fanshu[m]{\curl\overline{\pmb{u}}}))\fanshu[\varepsilon,h]{\pmb{w}}.\label{J1245c}
\end{align}

We now estimate $J_3$. By the Cauchy-Schwarz inequality, the trace inequality \eqref{trace3}, \eqref{definitionAepsh} and \eqref{estimateIh2}, we get
\begin{align}
J_3&\lesssim\varepsilon^2\left(\sum_{F\in\face^b}h_F\fanshu[0,F]{\frac{\partial}{\partial n}\curl(\pmb{u}-\pmb{I}_h\pmb{u})}^2\right)^{1/2}\left(\sum_{F\in\face^b}h_F^{-1}\fanshu[0,F]{\curl\pmb{w}}^2\right)^{1/2}\notag\\
&\lesssim\varepsilon\left(\sum_{K\in\mesh}\fanshu[0,K]{\nabla(\curl\pmb{u}-\curl\pmb{I}_h\pmb{u})}^2+h_K^2\fanshu[1,K]{\nabla(\curl\pmb{u}-\curl\pmb{I}_h\pmb{u})}^2 \right)^{1/2}\fanshu[\varepsilon,h]{\pmb{w}}\notag\\
&\lesssim\varepsilon h^{s-1}(\fanshu[s]{\pmb{u}}+\fanshu[s]{\curl\pmb{u}})\fanshu[\varepsilon,h]{\pmb{w}}.\label{estimateJ31}
\end{align}
Moreover, by the trace inequality \eqref{trace3}, \eqref{estimateIh2}, \eqref{trace4} and \eqref{regularitySP1}, we obtain
\begin{align}
J_3&\lesssim\varepsilon^2\left(\sum_{K\in\mesh}h_K^{-2}\fanshu[0,K]{\nabla(\curl\pmb{u}-\curl\pmb{I}_h\pmb{u})}^2+\fanshu[1,K]{\nabla(\curl\pmb{u}-\curl\pmb{I}_h\pmb{u})}^2 \right)^{1/2}\fanshu[0]{\curl\pmb{w}}\notag\\
&\lesssim\varepsilon^2\fanshu[2]{\curl\pmb{u}}\fanshu[0]{\curl\pmb{w}}\lesssim\varepsilon^{1/2}\fanshu[0]{\pmb{f}}\fanshu[\varepsilon,h]{\pmb{w}}.\label{estimateJ32}
\end{align}
We emphasize that if $\varepsilon\le h$, then the above estimate means that
\begin{align}
J_3\lesssim h^{1/2}\fanshu[0]{\pmb{f}}\fanshu[\varepsilon,h]{\pmb{w}}.\label{estimateJ33}
\end{align}
In addition, if $h\le\varepsilon$, then by \eqref{estimateJ31} with $s=2$ and \eqref{regularitySP1} we have
\begin{align}
J_3\lesssim\varepsilon h\fanshu[2]{\curl\pmb{u}}\fanshu[\varepsilon,h]{\pmb{w}}\lesssim h^{1/2}\fanshu[0]{\pmb{f}}\fanshu[\varepsilon,h]{\pmb{w}}.\label{estimateJ34}
\end{align}

Finally, the estimate \eqref{errAepsh1} follows from \eqref{defJ12345}, \eqref{J1245a} and \eqref{estimateJ31}; the estimate \eqref{errAepsh2} follows from \eqref{defJ12345}, \eqref{J1245b}, \eqref{estimateJ33} and \eqref{estimateJ34}; and the estimate \eqref{errAepsh3} follows from \eqref{defJ12345} and \eqref{J1245c}.
\end{proof}

With the help of previous lemmas we now can prove the main result of this section.
\begin{theorem}\label{theoremUUhWeak}
Suppose that $\pmb{u}\in \pmb{H}^s(\Omega)$ and $\curl\pmb{u}\in \pmb{H}^s(\Omega)$ with $1\le s\le k+1$. Then we have
\begin{align}
\fanshu[\varepsilon,h]{\pmb{u}-\pmb{u}_h}&\lesssim(h^s+\varepsilon h^{s-1})(\fanshu[s]{\pmb{u}}+\fanshu[s]{\curl\pmb{u}}),\label{erruuh1}\\
\fanshu[\varepsilon,h]{\pmb{u}-\pmb{u}_h}&\lesssim h^{1/2}\fanshu[0]{\pmb{f}}.\label{erruuh2}
\end{align}
Moreover, if the solution to the reduced problem satisfies $\curl\overline{\pmb{u}}\in \pmb{H}^m(\Omega)$ with $1\le m\le k+1$
\begin{align}
\fanshu[\varepsilon,h]{\pmb{u}-\pmb{u}_h}\lesssim(\varepsilon^{1/2}\fanshu[0]{\pmb{f}}+h^m(\fanshu[m]{\overline{\pmb{u}}}+\fanshu[m]{\curl\overline{\pmb{u}}})).\label{erruuh3}
\end{align}
\end{theorem}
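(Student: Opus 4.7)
The plan is to combine the discrete stability \eqref{discreteStability} with the interpolation and consistency-style bounds of Lemmas \ref{lemmaerrepshnorm} and \ref{lemmaAeh}, supplemented by an estimate of the nonconforming consistency residual. Writing $\pmb{u}-\pmb{u}_h=(\pmb{u}-\pmb{I}_h\pmb{u})+(\pmb{I}_h\pmb{u}-\pmb{u}_h)$, the first summand is absorbed directly by Lemma \ref{lemmaerrepshnorm}, so only the discrete error $\pmb{e}_h:=\pmb{I}_h\pmb{u}-\pmb{u}_h\in\pmb{U}_h$ must be controlled. Applying \eqref{discreteStability} with $\widetilde{\pmb{u}}_h=\pmb{e}_h$ and $\widetilde{p}_h=-p_h$, and using \eqref{equationAehfh} together with $\divv\pmb{u}=0$, the task reduces to estimating
\[
A_{\varepsilon,h}(\pmb{I}_h\pmb{u}-\pmb{u},\pmb{v}_h)+E_h(\pmb{u},\pmb{v}_h)-(\pmb{u}-\pmb{I}_h\pmb{u},\nabla q_h)
\]
uniformly over $(\pmb{v}_h,q_h)\in\pmb{U}_h\times W_{h,0}$ of unit norm, where $E_h(\pmb{u},\pmb{v}_h):=A_{\varepsilon,h}(\pmb{u},\pmb{v}_h)-(\pmb{f},\pmb{v}_h)$ is the nonconforming consistency residual. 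The first summand is bounded by Lemma \ref{lemmaAeh}, and the third by Cauchy--Schwarz together with Lemma \ref{lemmaerrepshnorm}, so all the genuinely new work is packed into $E_h$.

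To extract $E_h$, I would test \eqref{SPcurlLapcurlproblem} element-wise against $\pmb{v}_h$ and integrate by parts twice, matching terms against the definition \eqref{definitionah} of $a_h$. Because $\pmb{v}_h\in\pmb{H}_0(\curl;\Omega)$, all flux contributions of the form $\langle\pmb{\phi}\times\pmb{n},\pmb{v}_h\rangle$ vanish on interior faces and on $\partial\Omega$. Because $\curl\pmb{u}=0$ on $\partial\Omega$ in \eqref{SPcurlLapcurlproblem}, the symmetry and penalty terms of $a_h$ involving $\curl\pmb{u}$ disappear, while the flux term $-\varepsilon^2\sum_{F\in\face^b}\langle\tfrac{\partial}{\partial n}\curl\pmb{u},\curl\pmb{v}_h\rangle_F$ cancels exactly against the boundary contribution produced by the element-wise integration by parts of the $\varepsilon^2$-part. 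The only defect that survives is the interior-face sum
\[
E_h(\pmb{u},\pmb{v}_h)=\varepsilon^2\sum_{F\in\face^i}\bigl\langle\tfrac{\partial}{\partial n}\curl\pmb{u},\,\curl\pmb{v}_h|_{K^+}-\curl\pmb{v}_h|_{K^-}\bigr\rangle_F,
\]
which is well-defined since $\curl\pmb{u}\in\pmb{H}^2(\Omega)$, and tangential because, for $\pmb{v}_h\in\pmb{H}_0(\curl;\Omega)$, only the tangential trace of $\curl\pmb{v}_h$ can jump across $F$. Invoking the nonconforming constraint in \eqref{definitionUh}, namely $\langle\jump{(\curl\pmb{v}_h)\times\pmb{n}},\pmb{\chi}\rangle_F=0$ for all $\pmb{\chi}\in\pmb{P}^{k-1}(F)$, one may subtract the $L^2(F)$-projection of the tangential part of $\tfrac{\partial}{\partial n}\curl\pmb{u}$ onto $\pmb{P}^{k-1}(F)$ at no cost. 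Standard face approximation, the trace inequalities \eqref{trace2}--\eqref{trace3}, and the penalty component of $\fanshu[\varepsilon,h]{\cdot}$ then convert the residual into exactly the same three bounds produced for $J_3$ in Lemma \ref{lemmaAeh}.

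The main obstacle is the uniform $h^{1/2}\fanshu[0]{\pmb{f}}$ bound \eqref{erruuh2}: the naive estimate carries $\varepsilon^2\fanshu[2]{\curl\pmb{u}}$, which by \eqref{regularitySP1} is only $\varepsilon^{1/2}\fanshu[0]{\pmb{f}}$ and thus fails once $\varepsilon>h$. I would resolve this exactly as $J_3$ is handled in the proof of Lemma \ref{lemmaAeh}, splitting into the regimes $\varepsilon\le h$ (where $\varepsilon^{1/2}\le h^{1/2}$ directly) and $h\le\varepsilon$ (where one keeps one extra face derivative and uses $\varepsilon h\fanshu[2]{\curl\pmb{u}}\lesssim h\,\varepsilon^{-1/2}\fanshu[0]{\pmb{f}}\le h^{1/2}\fanshu[0]{\pmb{f}}$). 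The same dichotomy, combined with the boundary-layer estimates \eqref{regularitySP2}--\eqref{regularitySP4}, yields \eqref{erruuh3}, while a direct application of Lemma \ref{lemmaAeh} produces \eqref{erruuh1}. Assembling the three contributions through the triangle inequality then completes the proof.
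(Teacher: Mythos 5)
Your proposal is correct and follows essentially the same route as the paper: discrete stability applied to $\pmb{I}_h\pmb{u}-\pmb{u}_h$, Lemmas \ref{lemmaerrepshnorm} and \ref{lemmaAeh} for the interpolation and bilinear-form terms, and the consistency residual $E_h(\pmb{u},\pmb{v}_h)=\varepsilon^2\sum_{F\in\face^i}\langle\frac{\partial}{\partial n}\curl\pmb{u},\jump{\curl\pmb{v}_h}\rangle_F$ handled via the moment condition in \eqref{definitionUh} plus face approximation and trace inequalities. The only cosmetic difference is that for the uniform bound \eqref{erruuh2} the paper interpolates between the $s=1$ and $s=2$ consistency bounds by taking their geometric mean rather than splitting into the cases $\varepsilon\le h$ and $h\le\varepsilon$; the two computations are equivalent.
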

\begin{remark}
It is worth pointing out that for $\varepsilon=o(h)$ the estimate \eqref{erruuh3} gives better order approximation than the uniform order $h^{1/2}$.
\end{remark}
\begin{proof}
Applying \eqref{discreteStability} with $\widetilde{\pmb{u}}_h=\pmb{I}_h\pmb{u}-\pmb{u}_h$ and $\widetilde{p}_h=0$, recalling \eqref{equationAehfh} and $\pmb{u}_h\in\pmb{Z}_h$, we have
\begin{align}
\fanshu[\varepsilon,h]{\pmb{I}_h\pmb{u}-\pmb{u}_h}&\lesssim\sup_{(\pmb{v}_h,q_h)\in\pmb{U}_{h}\times W_{h,0}}\frac{A_{\varepsilon,h}(\pmb{I}_h\pmb{u}-\pmb{u}_h,\pmb{v}_h)+b(\pmb{I}_h\pmb{u}-\pmb{u}_h,q_h)}{\fanshu[\varepsilon,h]{\pmb{v}_h}+|q_h|_1}\notag\\
&=\sup_{(\pmb{v}_h,q_h)\in\pmb{U}_{h}\times W_{h,0}}\frac{A_{\varepsilon,h}(\pmb{I}_h\pmb{u},\pmb{v}_h)-(\pmb{f},\pmb{v}_h)+b(\pmb{I}_h\pmb{u}-\pmb{u},q_h)}{\fanshu[\varepsilon,h]{\pmb{v}_h}+|q_h|_1}\notag\\
&\lesssim\fanshu[0]{\pmb{u}-\pmb{I}_h\pmb{u}}+\sup_{\pmb{v}_h\in\pmb{U}_{h}}\frac{A_{\varepsilon,h}(\pmb{I}_h\pmb{u}-\pmb{u},\pmb{v}_h)+E_h(\pmb{u},\pmb{v}_h)}{\fanshu[\varepsilon,h]{\pmb{v}_h}},\label{errIhuuh}
\end{align}
where the consistency error is defined as
\begin{align}
E_h(\pmb{u},\pmb{v}_h)=A_{\varepsilon,h}(\pmb{u},\pmb{v}_h)-(\pmb{f},\pmb{v}_h).\label{DefEhuv}
\end{align}
Multiplying both sides of the first equation of \eqref{SPcurlLapcurlproblem} by $\pmb{v}_h\in\pmb{U}_h$ and integrating by parts, we get
\begin{align*}
(\pmb{f},\pmb{v}_h)&=\varepsilon^2\sum_{K\in\mesh}(\nabla\curl\pmb{u},\pmb{v}_h)_K-\varepsilon^2\sum_{F\in\face^b}\langle\frac{\partial}{\partial n}\curl\pmb{u},\curl\pmb{v}_h\rangle_F\\
&-\varepsilon^2\sum_{F\in\face^i}\langle\frac{\partial}{\partial n}\curl\pmb{u},\jump{\curl\pmb{v}_h}\rangle_F+(\curl\pmb{u},\curl\pmb{v}_h).
\end{align*}
Substituting the above equation into \eqref{DefEhuv} yields
\begin{align}
E_h(\pmb{u},\pmb{v}_h)=\varepsilon^2\sum_{F\in\face^i}\langle\frac{\partial}{\partial n}\curl\pmb{u},\jump{\curl\pmb{v}_h}\rangle_F.
\end{align}
According to \cite[Lemma 3.2]{zhang2022anew}, there holds for any $\pmb{\chi}\in\pmb{P}^{k-1}(F)$,
\begin{align}
\langle\frac{\partial}{\partial n}\curl\pmb{u},\jump{\curl\pmb{v}_h}\rangle_F=\langle\frac{\partial}{\partial n}\curl\pmb{u}-\pmb{\chi},\jump{\curl\pmb{v}_h}\rangle_F.
\end{align}
Hence setting $\pmb{\chi}=\frac{\partial}{\partial n}\curl\mathcal{J}_h^N\pmb{u}$, we have
\begin{align}
\langle\frac{\partial}{\partial n}\curl\pmb{u},\jump{\curl\pmb{v}_h}\rangle_F&=\langle\frac{\partial}{\partial n}\curl(\pmb{u}-\mathcal{J}_h^N\pmb{u}),\jump{\curl\pmb{v}_h}\rangle_F\notag\\
&\lesssim\fanshu[0,F]{\mean{\frac{\partial}{\partial n}(\curl\pmb{u}-\mathcal{J}_h^M\curl\pmb{u})}}\fanshu[0,F]{\jump{\curl\pmb{v}_h}}.\label{estimate1}
\end{align}
We now estimate each of the factors on the right of the above inequality separately. Noting that the average of $\jump{\curl\pmb{v}_h}$ vanishes on $F$, we have by Poincare's inequality that
\begin{align*}
\fanshu[0,F]{\jump{\curl\pmb{v}_h}}\lesssim h_F\fanshu[0,F]{\nabla_F(\jump{\curl\pmb{v}_h})},
\end{align*}
where $\nabla_F$ is the surface gradient on $F$. Using \eqref{trace4} we get
\begin{align}
\fanshu[0,F]{\jump{\curl\pmb{v}_h}}\lesssim h_F^{1/2}\banfan[1,\mathcal{T}_F]{\curl\pmb{v}_h}.\label{estimate2}
\end{align}
Using \eqref{trace1} we have
\begin{align}
\fanshu[0,F]{\mean{&\frac{\partial}{\partial n}(\curl\pmb{u}-\mathcal{J}_h^M\curl\pmb{u})}}\lesssim h_F^{-1/2}\banfan[1,\mathcal{T}_F]{\curl\pmb{u}-\mathcal{J}_h^M\curl\pmb{u}}\notag\\
&+\banfan[1,\mathcal{T}_F]{\curl\pmb{u}-\mathcal{J}_h^M\curl\pmb{u}}^{1/2}\banfan[2,\mathcal{T}_F]{\curl\pmb{u}-\mathcal{J}_h^M\curl\pmb{u}}^{1/2}\label{estimate3}
\end{align}
We then have by \eqref{estimate1}, \eqref{estimate2}, \eqref{estimate3} and Lemma \ref{lemmaJhM} that
\begin{align}
E_h(\pmb{u},\pmb{v}_h)\lesssim\varepsilon h^{s-1}\fanshu[s]{\curl\pmb{u}}\fanshu[\varepsilon,h]{\pmb{v}_h}.\label{estimateEuvh1}
\end{align}
The error estimate \eqref{erruuh1} then follows from \eqref{errIhuuh}, \eqref{errAepsh1}, \eqref{estimateEuvh1}, the triangle inequality and \eqref{errepshnorm1}.

Setting $s=1,2$ in \eqref{estimateEuvh1}, we get
\begin{align}
E_h(\pmb{u},\pmb{v}_h)\lesssim\varepsilon\fanshu[1]{\curl\pmb{u}}\fanshu[\varepsilon,h]{\pmb{v}_h}\label{estimateEuvh1a}
\end{align}
and
\begin{align}
E_h(\pmb{u},\pmb{v}_h)\lesssim\varepsilon h\fanshu[2]{\curl\pmb{u}}\fanshu[\varepsilon,h]{\pmb{v}_h},\label{estimateEuvh1b}
\end{align}
respectively.
As a result, we obtain by \eqref{estimateEuvh1a}, \eqref{estimateEuvh1b} and \eqref{regularitySP1} that
\begin{align}
E_h(\pmb{u},\pmb{v}_h)=E_h(\pmb{u},\pmb{v}_h)^{1/2}E_h(\pmb{u},\pmb{v}_h)^{1/2}\lesssim h^{1/2} \fanshu[0]{\pmb{f}}\fanshu[\varepsilon,h]{\pmb{v}_h}.\label{estimateEuvh2}
\end{align}
The error estimate \eqref{erruuh2} then follows from \eqref{errIhuuh}, \eqref{errAepsh2}, \eqref{estimateEuvh2}, \eqref{errepshnorm2}.

Finally, from \eqref{estimateEuvh1a} and \eqref{regularitySP1}, we have
\begin{align}
E_h(\pmb{u},\pmb{v}_h)\lesssim \varepsilon^{1/2} \fanshu[0]{\pmb{f}}\fanshu[\varepsilon,h]{\pmb{v}_h}.\label{estimateEuvh3}
\end{align}
The last estimate \eqref{erruuh3} then follows from \eqref{errIhuuh}, \eqref{errAepsh3}, \eqref{estimateEuvh3}, \eqref{errepshnorm3}.
\end{proof}

\subsection{Remarks on imposing boundary conditions strongly}
Similar to other non-conforming methods for the fourth order elliptic singular perturbation problem \eqref{SPbiharmonic} (e.g. \cite{Wang2006modified,Nilssen2001arobust}), we can define a finite element method for \eqref{SPcurlLapcurlproblem} that impose the second boundary condition strongly in the finite element space. Although this approach is perhaps more natural than the previous one, we shall see that the convergence properties of the previous method is better than this one.

In this case, we define the method as finding $\pmb{u}_h^0\in\pmb{U}_{h,0}$ and $p_h^0\in W_{h,0}$ such that
\begin{equation}
\begin{aligned}
A_{\varepsilon,h}^0(\pmb{u}_h^0,\pmb{v}_h)+(\pmb{v}_h,\nabla p_h^0)&=(\pmb{f},\pmb{v}_h),\quad\forall\pmb{v}_h\in\pmb{U}_h^0,\\
(\pmb{u}_h^0,\nabla q_h)&=0,\quad\forall q_h\in W_{h,0},
\end{aligned}\label{discreteSPquadStrong}
\end{equation}
where
\begin{align}
A_{\varepsilon,h}^0(\pmb{v}_h,\pmb{w}_h)=\varepsilon^2\sum_{K\in\mesh}(\nabla\curl\pmb{v},\nabla\curl\pmb{w})_K+(\curl\pmb{v}_h,\curl\pmb{w}_h).
\end{align}
Define the following norm
\begin{align}
\fanshu[A]{\pmb{v}_h}^2:=A_{\varepsilon,h}^0(\pmb{v}_h,\pmb{v}_h)+\fanshu[0]{\pmb{v}_h}^2.\label{defNormA}
\end{align}
Using \cite[Remark 4.1]{zhang2022anew}, one can easily see that $A_{\varepsilon,h}^0(\cdot,\cdot)$ is coercive on $\widetilde{\pmb{Z}}_h$, where
\begin{align*}
\widetilde{\pmb{Z}}_h = \{\pmb{v}_h\in\pmb{U}_{h,0}:(\pmb{v}_h,\nabla q_h)=0,\ \forall q_h\in W_{h,0} \},
\end{align*}
and the following inf-sup condition holds
\begin{align*}
\sup_{\pmb{0}\neq\pmb{v}_h\in\pmb{U}_{h,0}}\frac{b(\pmb{v}_h,q_h)}{\fanshu[A]{\pmb{v}_h}}\ge \frac{b(\nabla q_h,q_h)}{\fanshu[A]{\nabla q_h}} \gtrsim |q_h|_1 \quad\forall q_h\in W_{h,0}.
\end{align*}
Furthermore, we have the following discrete stability
\begin{align}\label{discreteStabilityStrong}
&\fanshu[A]{\widetilde{\pmb{u}}_h}+|\widetilde{p}_h|_1\lesssim\sup_{(\pmb{v}_h,q_h)\in\pmb{U}_{h,0}\times W_{h,0}}\frac{A_{\varepsilon,h}^0(\widetilde{\pmb{u}}_h,\pmb{v}_h)+b(\pmb{v}_h,\widetilde{p}_h)+b(\widetilde{\pmb{u}}_h,q_h)}{\fanshu[A]{\pmb{v}_h}+|q_h|_1}
\end{align}
for any $(\widetilde{\pmb{u}}_h,\widetilde{p}_h)\in\pmb{U}_{h,0}\times W_{h,0}$. Therefore \eqref{discreteSPquadStrong} is well-posed.

Next, we give a result similar to Lemma \ref{lemmaerrepshnorm}.
\begin{lemma}\label{lemmaerrepshnormIh0}
Let $\pmb{u}$ be the solution to \eqref{SPcurlLapcurlproblem} satisfying $\pmb{u}\in \pmb{H}^s(\Omega)$ and $\curl\pmb{u}\in \pmb{H}^s(\Omega)$ with $1\le s\le k+1$, and let $\pmb{I}_h^0$ be the operator given in Lemma \ref{lemmaIh}. Then we have
\begin{align}
\fanshu[A]{\pmb{u}-\pmb{I}_h^0\pmb{u}}&\lesssim(h^s+\varepsilon h^{s-1})(\fanshu[s]{\pmb{u}}+\fanshu[s]{\curl\pmb{u}}),\label{errepshnorm1Ih0}\\
\fanshu[A]{\pmb{u}-\pmb{I}_h^0\pmb{u}}&\lesssim h^{1/2}\fanshu[0]{\pmb{f}}.\label{errepshnorm2Ih0}
\end{align}
\end{lemma}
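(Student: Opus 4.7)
The plan is to mirror the proof of Lemma \ref{lemmaerrepshnorm}, exploiting two simplifications. First, the norm $\fanshu[A]{\cdot}$ in \eqref{defNormA} consists of only the three pieces $\varepsilon^2\sum_K\fanshu[0,K]{\nabla\curl\pmb{v}}^2$, $\fanshu[0]{\curl\pmb{v}}^2$, and $\fanshu[0]{\pmb{v}}^2$, i.e.\ without the boundary jump term $\sum_{F\in\face^b}h_F^{-1}\fanshu[0,F]{\curl\pmb{v}}^2$ that was present in $\fanshu[\varepsilon,h]{\cdot}$; therefore no analogues of \eqref{errfacecurl1}--\eqref{errfacecurl3} are needed. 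Second, the exact solution $\pmb{u}$ of \eqref{SPcurlLapcurlproblem} satisfies $\pmb{u}\times\pmb{n}=0$ and $\curl\pmb{u}=0$ on $\partial\Omega$, so $\pmb{u}\in\pmb{H}_0(\mathrm{grad}~\mathrm{curl};\Omega)$. This entitles us to the sharper interpolation bounds \eqref{estimateIh01}--\eqref{estimateIh02} for $\pmb{I}_h^0\pmb{u}$, which are structurally identical to \eqref{estimateIh1}--\eqref{estimateIh2} used in Lemma \ref{lemmaerrepshnorm}.

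After splitting $\fanshu[A]{\pmb{u}-\pmb{I}_h^0\pmb{u}}^2$ into the three pieces above, the rate estimate \eqref{errepshnorm1Ih0} follows by applying \eqref{estimateIh02} with $m=1$ and $m=0$ to the first two pieces and \eqref{estimateIh01} to the third, and summing. For the $\varepsilon$-uniform bound \eqref{errepshnorm2Ih0} I would reproduce the three arguments that led to \eqref{errgcnorm2}, \eqref{errcurlnorm2} and \eqref{errL2norm2}: for the gradient-of-curl piece, interpolate \eqref{estimateIh02} between $(s,m)=(1,1)$ and $(2,1)$ and invoke \eqref{regularitySP1} to recover $\varepsilon^2 h\fanshu[2]{\pmb{u}}\fanshu[2]{\curl\pmb{u}}\lesssim h\fanshu[0]{\pmb{f}}^2$; for the $\curl$ and $L^2$ pieces, combine the decomposition $\pmb{u}=\overline{\pmb{u}}+(\pmb{u}-\overline{\pmb{u}})$ with \eqref{regularitySP2}--\eqref{regularitySP4} and \eqref{regularityreduced}.

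The hard part will be the $\curl$ piece, because the decomposition sends us through $\pmb{I}_h^0\overline{\pmb{u}}$, and $\curl\overline{\pmb{u}}\notin\pmb{H}_0^1(\Omega)$, so only the weaker bounds \eqref{estimateIh01A}--\eqref{estimateIh02A} are available on $\overline{\pmb{u}}$; in particular the $\fanshu[0]{\curl\overline{\pmb{u}}}$ term they leave behind does not vanish as $h\to 0$. My strategy is to separate the regimes in $\varepsilon$. For $\varepsilon\ge h$, the direct bound $\fanshu[0]{\curl(\pmb{u}-\pmb{I}_h^0\pmb{u})}\lesssim h\fanshu[1]{\curl\pmb{u}}\lesssim h\varepsilon^{-1/2}\fanshu[0]{\pmb{f}}\le h^{1/2}\fanshu[0]{\pmb{f}}$, coming from \eqref{estimateIh02} and \eqref{regularitySP1}, closes the argument. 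For $\varepsilon\le h$, I would dispose of the $\pmb{u}-\overline{\pmb{u}}$ contribution using stability of $\pmb{I}_h^0$ on $\curl$ together with \eqref{regularitySP2}, yielding $\fanshu[0]{\curl(\pmb{u}-\overline{\pmb{u}})-\curl\pmb{I}_h^0(\pmb{u}-\overline{\pmb{u}})}\lesssim\varepsilon^{1/2}\fanshu[0]{\pmb{f}}\le h^{1/2}\fanshu[0]{\pmb{f}}$, and then refine the proof of \eqref{estimateIh02A} for $\overline{\pmb{u}}$ by rewriting the boundary jump $\mathcal{J}_h^M\curl\overline{\pmb{u}}\times\pmb{n}$ on $F\in\face^b$ as $(\mathcal{J}_h^M\curl\overline{\pmb{u}}-\curl\overline{\pmb{u}})\times\pmb{n}+\curl\overline{\pmb{u}}\times\pmb{n}$; trace inequalities together with \eqref{regularityreduced} then yield the sharper bound $\fanshu[0]{\curl(\overline{\pmb{u}}-\pmb{I}_h^0\overline{\pmb{u}})}^2\lesssim h^2|\curl\overline{\pmb{u}}|_1^2+h\fanshu[0,\partial\Omega]{\curl\overline{\pmb{u}}}^2\lesssim h\fanshu[0]{\pmb{f}}^2$, which suffices.
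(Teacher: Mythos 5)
Your proposal is correct; every step you sketch can be carried out with the tools the paper has already established, and your treatment of the critical term $\fanshu[0]{\curl(\pmb{u}-\pmb{I}_h^0\pmb{u})}$ is a genuinely different route from the paper's. The paper compares the two interpolants of the same function: it writes $\fanshu[0]{\curl(\pmb{u}-\pmb{I}_h^0\pmb{u})}\le\fanshu[0]{\curl(\pmb{u}-\pmb{I}_h\pmb{u})}+\fanshu[0]{\curl(\pmb{I}_h\pmb{u}-\pmb{I}_h^0\pmb{u})}$, absorbs the first term into the already-proved bound \eqref{errepshnorm2}, and observes that $\pmb{I}_h\pmb{u}$ and $\pmb{I}_h^0\pmb{u}$ coincide on elements not touching $\partial\Omega$, so that the second term reduces to boundary-face quantities $h_F\fanshu[0,F]{\pmb{n}\times\curl\mathcal{J}_h^N\pmb{u}}^2$; these are then handled much as in your last step, by inserting $\curl\pmb{u}$ (which vanishes on $\partial\Omega$), splitting $\pmb{u}=\overline{\pmb{u}}+(\pmb{u}-\overline{\pmb{u}})$, and invoking \eqref{trace1}, Lemma \ref{lemmaJhM}, \eqref{regularitySP1}--\eqref{regularitySP2} and \eqref{regularityreduced} to obtain $h\fanshu[0]{\pmb{f}}^2$. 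You instead split the function first ($\pmb{u}=\overline{\pmb{u}}+(\pmb{u}-\overline{\pmb{u}})$) and the parameter range second ($\varepsilon\ge h$ versus $\varepsilon\le h$), replacing the geometric-mean interpolation of \eqref{errcurlnorm2} by the two one-sided bounds $h\fanshu[1]{\curl\pmb{u}}\lesssim h\varepsilon^{-1/2}\fanshu[0]{\pmb{f}}$ and $\fanshu[0]{\curl(\pmb{u}-\overline{\pmb{u}})}\lesssim\varepsilon^{1/2}\fanshu[0]{\pmb{f}}$, and refining \eqref{estimateIh02A} for $\overline{\pmb{u}}$ by rewriting the boundary jump as $(\mathcal{J}_h^M\curl\overline{\pmb{u}}-\curl\overline{\pmb{u}})\times\pmb{n}+\curl\overline{\pmb{u}}\times\pmb{n}$ and applying the trace inequality to $\curl\overline{\pmb{u}}$ itself. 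Both routes rest on the same three facts ($\curl\pmb{u}|_{\partial\Omega}=0$, the a priori bounds of Section 2, and trace inequalities on boundary faces); the paper's route recycles \eqref{errepshnorm2} and thereby avoids any case distinction, while yours is more self-contained and makes explicit where the $\varepsilon^{1/2}$-versus-$h^{1/2}$ trade-off occurs.
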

\begin{proof}
The inequality \eqref{errepshnorm1Ih0} can be proved by the method analogous to that used in Lemma \ref{lemmaIh}.

We now focus on \eqref{errepshnorm2Ih0}. An argument similar to the one used in \eqref{errgcnorm2} shows that
\begin{align*}
\varepsilon^2\sum_{K\in\mesh}\fanshu[0,K]{\nabla\curl(\pmb{u}-\pmb{I}_h^0\pmb{u})}^2&\lesssim \varepsilon^2h\fanshu[2]{\pmb{u}}\fanshu[2]{\curl\pmb{u}}\lesssim h\fanshu[0]{\pmb{f}}^2.
\end{align*}
We now turn to estimate $\fanshu[0]{\curl(\pmb{u}-\pmb{I}_h^0\pmb{u})}$. Using the triangle inequality, we have
\begin{align*}
\fanshu[0]{\curl(\pmb{u}-\pmb{I}_h^0\pmb{u})}\le\fanshu[0]{\curl(\pmb{u}-\pmb{I}_h\pmb{u})}+\fanshu[0]{\curl(\pmb{I}_h\pmb{u}-\pmb{I}_h^0\pmb{u})}.
\end{align*}
Note that the first term on the right-hand side of the above inequality has been estimated in \eqref{errepshnorm2}. By the definitions of $\pmb{I}_h$ and $\pmb{I}_h^0$, we can see that $\pmb{I}_h\pmb{u}|_K = \pmb{I}_h^0\pmb{u}|_K$ for $K\in\mesh$ satisfying $\partial K\cap\partial\Omega=\emptyset$. Therefore, by a scaling argument, we have
\begin{align*}
\fanshu[0,K]{\curl(\pmb{I}_h\pmb{u}-\pmb{I}_h^0\pmb{u})}\lesssim\sum_{F\in\face^b}h_F\fanshu[0,F]{\pmb{n}\times\curl(\mathcal{J}_h^N\pmb{u})}.
\end{align*}
Using the above inequality and recalling that $\curl\pmb{u}|_{\partial\Omega}=0$, we get
\begin{align*}
\sum_{K\in\mesh}\fanshu[0,K]{\curl(\pmb{I}_h\pmb{u}-\pmb{I}_h^0\pmb{u})}^2&\lesssim\sum_{F\in\face^b}h_F\fanshu[0,F]{\pmb{n}\times\curl(\mathcal{J}_h^N\pmb{u})}^2\\
&=\sum_{F\in\face^b}h_F\fanshu[0,F]{\pmb{n}\times\curl(\mathcal{J}_h^N\pmb{u}-\pmb{u})}^2.
\end{align*}
Adding and subtracting proper terms, we have
\begin{align*}
\sum_{F\in\face^b}&h_F\fanshu[0,F]{\pmb{n}\times\curl(\mathcal{J}_h^N\pmb{u}-\pmb{u})}^2\\
&\lesssim\sum_{F\in\face^b}h_F\fanshu[0,F]{\pmb{n}\times\curl(\mathcal{J}_h^N(\pmb{u}-\pmb{u}^0)-(\pmb{u}-\pmb{u}^0))}^2\\
&+\sum_{F\in\face^b}h_F\fanshu[0,F]{\pmb{n}\times\curl(\mathcal{J}_h^N\pmb{u}^0-\pmb{u}^0)}^2.
\end{align*}
Using the trace inequality \eqref{trace1}, \eqref{lemmaJhM}, \eqref{regularitySP1} and \eqref{regularitySP2}, we derive
\begin{align*}
\sum_{F\in\face^b}&h_F\fanshu[0,F]{\pmb{n}\times\curl(\mathcal{J}_h^N(\pmb{u}-\pmb{u}^0)-(\pmb{u}-\pmb{u}^0))}^2\\
&\lesssim h\fanshu[0]{\curl(\pmb{u}-\pmb{u}^0)}\fanshu[1]{\curl(\pmb{u}-\pmb{u}^0)}\\
&\lesssim h\fanshu[0]{\pmb{f}}^2,
\end{align*}
and
\begin{align*}
\sum_{F\in\face^b}h_F\fanshu[0,F]{\pmb{n}\times\curl(\mathcal{J}_h^N\pmb{u}^0-\pmb{u}^0)}^2\lesssim h\fanshu[0]{\pmb{f}}^2.
\end{align*}
Finally, combing the above inequalities we arrive at \eqref{errepshnorm2Ih0}.
\end{proof}

\begin{theorem}\label{theoremUUhStrong}
Let $\pmb{u}\in \pmb{H}^s(\Omega)$ and $\curl\pmb{u}\in \pmb{H}^s(\Omega)$ with $1\le s\le k+1$. Then we have
\begin{align}
\fanshu[A]{\pmb{u}-\pmb{u}_h^0}&\lesssim(h^s+\varepsilon h^{s-1})(\fanshu[s]{\pmb{u}}+\fanshu[s]{\curl\pmb{u}}),\label{erruuh1Strong}\\
\fanshu[A]{\pmb{u}-\pmb{u}_h^0}&\lesssim h^{1/2}\fanshu[0]{\pmb{f}}.\label{erruuh2Strong}
\end{align}
\end{theorem}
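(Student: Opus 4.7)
The plan is to mirror the strategy used in the proof of Theorem \ref{theoremUUhWeak}. First I would apply the discrete stability \eqref{discreteStabilityStrong} to $\widetilde{\pmb{u}}_h=\pmb{I}_h^0\pmb{u}-\pmb{u}_h^0$ with $\widetilde{p}_h=0$. Since $\pmb{u}_h^0\in\widetilde{\pmb{Z}}_h$ and the first equation of \eqref{discreteSPquadStrong} gives $A_{\varepsilon,h}^0(\pmb{u}_h^0,\pmb{v}_h)=(\pmb{f},\pmb{v}_h)$ (by the same argument as in the Remark following \eqref{discreteSPquad}), this reduces the analysis to bounding the interpolation term $A_{\varepsilon,h}^0(\pmb{I}_h^0\pmb{u}-\pmb{u},\pmb{v}_h)$ and the consistency error $E_h^0(\pmb{u},\pmb{v}_h):=A_{\varepsilon,h}^0(\pmb{u},\pmb{v}_h)-(\pmb{f},\pmb{v}_h)$ divided by $\fanshu[A]{\pmb{v}_h}+|q_h|_1$. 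The first piece is immediately handled by Cauchy--Schwarz together with Lemma \ref{lemmaerrepshnormIh0}.

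For the consistency error I would integrate the first equation of \eqref{SPcurlLapcurlproblem} by parts against $\pmb{v}_h\in\pmb{U}_{h,0}$, exactly as in the proof of Theorem \ref{theoremUUhWeak}. Because $A_{\varepsilon,h}^0$ carries no Nitsche penalty, the resulting expression retains boundary-face contributions in addition to the usual interior jumps:
\begin{equation*}
E_h^0(\pmb{u},\pmb{v}_h)=\varepsilon^2\sum_{F\in\face^i}\langle\frac{\partial}{\partial n}\curl\pmb{u},\jump{\curl\pmb{v}_h}\rangle_F+\varepsilon^2\sum_{F\in\face^b}\langle\frac{\partial}{\partial n}\curl\pmb{u},\curl\pmb{v}_h\rangle_F.
\end{equation*}
The interior sum is bounded exactly as in \eqref{estimate1}--\eqref{estimateEuvh1}, so the genuinely new work concerns the boundary sum.

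This boundary term is the main obstacle. To deal with it I would exploit two properties of $\pmb{U}_{h,0}$: the moment condition $\langle(\curl\pmb{v}_h)\times\pmb{n},\pmb{\chi}\rangle_F=0$ for $\pmb{\chi}\in\pmb{P}^{k-1}(F)$ is enforced on boundary faces as well, and $\pmb{v}_h\times\pmb{n}=0$ on $\partial\Omega$ implies $\pmb{n}\cdot\curl\pmb{v}_h|_F=0$, so $\curl\pmb{v}_h|_F$ is tangential. Together these two facts give vanishing moments for the full vector $\curl\pmb{v}_h$ against $\pmb{P}^{k-1}(F)$ on every boundary face. This permits the same trick used on interior faces, namely replacing $\frac{\partial}{\partial n}\curl\pmb{u}$ by $\frac{\partial}{\partial n}(\curl\pmb{u}-\mathcal{J}_h^M\curl\pmb{u})$, after which the trace inequality \eqref{trace3}, a Poincar\'e estimate on $F$ (using moment-vanishing), and Lemma \ref{lemmaJhM} yield the bound $\varepsilon h^{s-1}\fanshu[s]{\curl\pmb{u}}\fanshu[A]{\pmb{v}_h}$, matching the interior-face estimate.

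Putting these pieces together with Lemma \ref{lemmaerrepshnormIh0} and the triangle inequality produces \eqref{erruuh1Strong}. For \eqref{erruuh2Strong} I would reuse the interpolation idea of \eqref{estimateEuvh2}: combining the $s=1$ and $s=2$ versions of the consistency bound with the regularity estimate \eqref{regularitySP1} via the geometric mean gives $E_h^0(\pmb{u},\pmb{v}_h)\lesssim h^{1/2}\fanshu[0]{\pmb{f}}\fanshu[A]{\pmb{v}_h}$, and then \eqref{errepshnorm2Ih0} closes the argument. I do not expect an analogue of the third estimate \eqref{erruuh3} to be available in this setting, since the boundary-face contribution above scales like $\varepsilon h^{s-1}$ rather than becoming small in the regime $\varepsilon\ll h$; this is precisely the mechanism making the weak imposition sharper for small $\varepsilon$.
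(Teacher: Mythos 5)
Your proof is correct and follows essentially the same route as the paper: discrete stability applied to $\pmb{I}_h^0\pmb{u}-\pmb{u}_h^0$, the interpolation bounds of Lemma \ref{lemmaerrepshnormIh0}, and the consistency error handled by integration by parts with the boundary-face moment conditions built into $\pmb{U}_{h,0}$ (plus $\pmb{n}\cdot\curl\pmb{v}_h=0$ on $\partial\Omega$), followed by the geometric-mean trick for the $h^{1/2}$ bound. One minor aside: your closing diagnosis of why no analogue of \eqref{erruuh3} holds differs from the paper's and is arguably off --- the consistency error with $s=1$ still gives $\varepsilon\fanshu[1]{\curl\pmb{u}}\lesssim\varepsilon^{1/2}\fanshu[0]{\pmb{f}}$ even with boundary faces included, and the real obstruction is the interpolation term $\fanshu[0]{\curl(\overline{\pmb{u}}-\pmb{I}_h^0\overline{\pmb{u}})}$, which cannot be made small because $\pmb{I}_h^0$ forces vanishing boundary moments on the curl while $\curl\overline{\pmb{u}}$ does not vanish on $\partial\Omega$; this does not affect the two estimates actually claimed.
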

\begin{proof}
We only sketch the proof, because the proof of Theorem \ref{theoremUUhStrong} is similar to that of Theorem \ref{theoremUUhWeak}.

By \eqref{defNormA}, \eqref{discreteSPquadStrong} and \eqref{discreteStabilityStrong} we have
\begin{align}
\fanshu[A]{\pmb{I}_h^0\pmb{u}-\pmb{u}_h^0}\lesssim\fanshu[0]{\pmb{u}-\pmb{I}_h^0\pmb{u}}+\sup_{\pmb{v}_h\in\pmb{U}_{h,0}}\frac{A_{\varepsilon,h}^0(\pmb{I}_h^0\pmb{u}-\pmb{u},\pmb{v}_h)+E_h^0(\pmb{u},\pmb{v}_h)}{\fanshu[A]{\pmb{v}_h}},\label{errIh0uuh}
\end{align}
where the consistency error is defined as
\begin{align}
E_h^0(\pmb{u},\pmb{v}_h)=A_{\varepsilon,h}^0(\pmb{u},\pmb{v}_h)-(\pmb{f},\pmb{v}_h).\label{DefEh0uv}
\end{align}

By the proof of Lemma \ref{lemmaAeh}, and using \eqref{estimateIh01}-\eqref{estimateIh02A}, we obtain
\begin{align}
A_{\varepsilon,h}^0(\pmb{u}-\pmb{I}_h^0\pmb{u},\pmb{v}_h)&\lesssim(\varepsilon h^{s-1}+h^s)(\fanshu[s]{\pmb{u}}+\fanshu[s]{\curl\pmb{u}})\fanshu[A]{\pmb{v}_h},\label{errAepsh01}\\
A_{\varepsilon,h}^0(\pmb{u}-\pmb{I}_h^0\pmb{u},\pmb{v}_h)&\lesssim h^{1/2}\fanshu[0]{\pmb{f}}\fanshu[A]{\pmb{v}_h}.\label{errAepsh02}
\end{align}
Integrating by parts leads to
\begin{align*}
E_h^0(\pmb{u},\pmb{v}_h)=\varepsilon^2\sum_{F\in\face}\langle\frac{\partial}{\partial n}\curl\pmb{u},\jump{\curl\pmb{v}_h}\rangle_F.
\end{align*}
Using the same arguments as in the proof of Theorem \ref{theoremUUhWeak}, we get
\begin{align}
E_h^0(\pmb{u},\pmb{v}_h)&\lesssim\varepsilon h^{s-1}\fanshu[s]{\curl\pmb{u}}\fanshu[A]{\pmb{v}_h}.\label{estimateEuvh01}\\
E_h^0(\pmb{u},\pmb{v}_h)&\lesssim h^{1/2} \fanshu[0]{\pmb{f}}\fanshu[A]{\pmb{v}_h}.\label{estimateEuvh02}
\end{align}

Finally, the estimate \eqref{erruuh1Strong} then follows from \eqref{errIh0uuh}, \eqref{errAepsh01}, \eqref{estimateEuvh01} and \eqref{errepshnorm1Ih0}, and the estimate \eqref{erruuh2Strong} then follows from \eqref{errIh0uuh}, \eqref{errAepsh02}, \eqref{estimateEuvh02} and \eqref{errepshnorm2Ih0}.
\end{proof}
\begin{remark}
Note that the error $\pmb{u}-\pmb{u}_h^0$ does not satisfy an estimate of the form \eqref{erruuh3}. This is because we can not obtain an estimate of the interpolation error of the form \eqref{errepshnorm3}. By looking closely at the proof of Lemma \ref{lemmaerrepshnorm}, we find a problem occurs when we try to estimate the term $\fanshu[0]{\curl(\pmb{u}-\pmb{I}_h^0\pmb{u})}$ due to the result \eqref{estimateIh02A}. More precisely, in contrast to \eqref{errcurlnorm3}, we have by \eqref{estimateIh02A} and \eqref{regularitySP1},
\begin{align*}
\fanshu[0]{\curl(\pmb{u}-\pmb{I}_h^0\pmb{u})}&\lesssim \fanshu[0]{\curl(\pmb{u}-\overline{\pmb{u}})-\curl\pmb{I}_h^0(\pmb{u}-\overline{\pmb{u}})}+\fanshu[0]{\curl(\overline{\pmb{u}}-\pmb{I}_h^0\overline{\pmb{u}})}\notag\\
&\lesssim\fanshu[0]{\curl(\pmb{u}-\overline{\pmb{u}})}+\fanshu[0]{\curl(\overline{\pmb{u}}-\pmb{I}_h^0\overline{\pmb{u}})}\notag\\
&\lesssim\varepsilon^{1/2}\fanshu[0]{\pmb{f}}+\fanshu[0]{\curl(\overline{\pmb{u}}-\pmb{I}_h^0\overline{\pmb{u}})}.
\end{align*}
However, from \eqref{estimateIh02A} we can only derive
\begin{align}
\fanshu[0]{\curl(\overline{\pmb{u}}-\pmb{I}_h^0\overline{\pmb{u}})}\lesssim\fanshu[0]{\curl\overline{\pmb{u}}},
\end{align}
which does not yield anything.

The above discussion illustrate that the advantages of imposing boundary conditions weakly. Particularly, if $\varepsilon\ll h$, then from \eqref{erruuh3} we will observe convergence rates of $O(h^{m})$ in the energy norm, which is better than $O(h^{1/2})$.
\end{remark}

\section{Numerical results}\label{sectionnumerical}
In this section we provide some numerical examples to demonstrate the convergence rates predicted in Theorem \ref{theoremUUhWeak} and Theorem \ref{theoremUUhStrong} and illustrate the advantages of weakly imposed the second boundary conditions in certain situations. We also compare our method with total nonconforming methods introduced in \cite{zheng2011nonconforming,huang2022nonconforing}. Our algorithms are implemented by the iFEM package \cite{chen2008ifem}. For simplicity, we only consider the case $k=1$.

\subsection{Example 1}
We consider the quad-curl problem \eqref{SPcurlLapcurlproblem} on the unit cube $\Omega=(0,1)^3$. The exact solution is
\begin{align}
\pmb{u}=\left(
          \begin{array}{c}
            \sin(\pi x)^3\sin(\pi y)^2\sin(\pi z)^2\cos(\pi y)\cos(\pi z) \\
            \sin(\pi y)^3\sin(\pi z)^2\sin(\pi x)^2\cos(\pi z)\cos(\pi x) \\
          -2\sin(\pi z)^3\sin(\pi x)^2\sin(\pi y)^2\cos(\pi x)\cos(\pi y) \\
          \end{array}
        \right),
\end{align}
with the following data
\begin{align*}
\pmb{f}=-\varepsilon^2\curl\Delta\curl\pmb{u}+(\nabla\times)^2\pmb{u},\quad \sigma=50.
\end{align*}
Although $\pmb{f}$ depends on $\varepsilon$, this does not affect convergence rates in this case due to the fact that $\fanshu[2]{\pmb{u}}$ and $\fanshu[2]{\curl\pmb{u}}$ are independent of $\varepsilon$. The mesh $\mesh$ is obtained by dividing the unit cube into $N^3$ small cubes and then partition each small cube into $6$ tetrahedra. Given $\pmb{u}$ and $\pmb{u}_h$, the following error quantities are considered in this example:
\begin{align*}
E_{L^2}(\pmb{u},\pmb{u}_h)&=\fanshu[0]{\pmb{u}-\pmb{u}_h}/\fanshu[0]{\pmb{u}},\\
E_{\mathrm{curl}}(\pmb{u},\pmb{u}_h)&=\fanshu[0]{\nabla\times(\pmb{u}-\pmb{u}_h)}/\fanshu[0]{\curl\pmb{u}},\\
E_{\mathrm{g}\mathrm{c}}(\pmb{u},\pmb{u}_h)&=\fanshu[0]{\nabla_h\curl{(\pmb{u}-\pmb{u}_h)}}/\fanshu[0]{\gcurl\pmb{u}},\\
E_{\varepsilon,h}(\pmb{u},\pmb{u}_h)&=(\varepsilon^2\fanshu[0]{\nabla_h\curl{(\pmb{u}-\pmb{u}_h)}}^2+\fanshu[0]{\nabla\times(\pmb{u}-\pmb{u}_h)}^2+\fanshu[0]{\pmb{u}-\pmb{u}_h}^2)^{1/2}/\fanshu[\varepsilon,h]{\pmb{u}},\\
E_{A}(\pmb{u},\pmb{u}_h)&=(\varepsilon^2\fanshu[0]{\nabla_h\curl{(\pmb{u}-\pmb{u}_h)}}^2+\fanshu[0]{\nabla\times(\pmb{u}-\pmb{u}_h)}^2+\fanshu[0]{\pmb{u}-\pmb{u}_h}^2)^{1/2}/\fanshu[A]{\pmb{u}}.
\end{align*}

\begin{table}[!htbp]\centering
\caption{Example 1: Numerical results by the nonconforming method \eqref{discreteSPquad}\label{tableexample1weak}}
\scalebox{1}[1]{
\begin{tabular}{*{10}{c@{\extracolsep{5pt}}@{\extracolsep{5pt}}}}\hline
$\varepsilon$&$h$ &$E_{L^2}(\pmb{u},\pmb{u}_h)$&  rate & $E_{\mathrm{curl}}(\pmb{u},\pmb{u}_h)$&  rate  & $E_{\mathrm{g}\mathrm{c}}(\pmb{u},\pmb{u}_h)$ &  rate& $E_{\varepsilon,h}(\pmb{u},\pmb{u}_h)$ &  rate\\
\hline
  & 0.2165& 1.734e-01&      & 1.966e-01&      & 4.659e-01&      & 4.822e-01&      \\
  & 0.1732& 1.179e-01& 1.73& 1.361e-01& 1.65& 3.864e-01& 0.84& 3.994e-01& 0.84\\
  & 0.1443& 8.438e-02& 1.83& 9.876e-02& 1.76& 3.288e-01& 0.89& 3.398e-01& 0.89\\
1 & 0.1237& 6.295e-02& 1.90& 7.448e-02& 1.83& 2.856e-01& 0.91& 2.954e-01& 0.91\\
  & 0.1083& 4.854e-02& 1.95& 5.796e-02& 1.88& 2.522e-01& 0.93& 2.612e-01& 0.92\\
  & 0.0962& 3.843e-02& 1.98& 4.627e-02& 1.91& 2.256e-01& 0.94& 2.341e-01& 0.93\\
  & 0.0866& 3.111e-02& 2.01& 3.774e-02& 1.93& 2.041e-01& 0.95& 2.121e-01& 0.93\\
\hline
  & 0.2165& 5.987e-02&      & 8.934e-02&      & 5.535e-01&      & 1.129e-01&      \\
  & 0.1732& 3.900e-02& 1.92& 5.943e-02& 1.83& 4.327e-01& 1.10& 8.013e-02& 1.54\\
  & 0.1443& 2.733e-02& 1.95& 4.230e-02& 1.87& 3.545e-01& 1.09& 6.098e-02& 1.50\\
$10^{-2}$ & 0.1237& 2.018e-02& 1.97& 3.163e-02& 1.89& 3.006e-01& 1.07& 4.884e-02& 1.44\\
  & 0.1083& 1.550e-02& 1.98& 2.455e-02& 1.90& 2.613e-01& 1.05& 4.061e-02& 1.38\\
  & 0.0962& 1.227e-02& 1.98& 1.961e-02& 1.90& 2.314e-01& 1.03& 3.475e-02& 1.33\\
  & 0.0866& 9.953e-03& 1.99& 1.604e-02& 1.91& 2.078e-01& 1.02& 3.038e-02& 1.28\\
\hline
  & 0.2165& 5.777e-02&      & 8.459e-02&      & 7.187e-01&      & 8.440e-02&      \\
  & 0.1732& 3.781e-02& 1.90& 5.546e-02& 1.89& 5.702e-01& 1.04& 5.534e-02& 1.89\\
  & 0.1443& 2.661e-02& 1.93& 3.901e-02& 1.93& 4.696e-01& 1.06& 3.892e-02& 1.93\\
$10^{-5}$ & 0.1237& 1.972e-02& 1.94& 2.887e-02& 1.95& 3.984e-01& 1.07& 2.881e-02& 1.95\\
  & 0.1083& 1.519e-02& 1.96& 2.221e-02& 1.97& 3.457e-01& 1.06& 2.216e-02& 1.97\\
  & 0.0962& 1.205e-02& 1.97& 1.760e-02& 1.97& 3.053e-01& 1.05& 1.756e-02& 1.97\\
  & 0.0866& 9.788e-03& 1.97& 1.429e-02& 1.98& 2.734e-01& 1.05& 1.426e-02& 1.98\\
\hline
\end{tabular}}
\end{table}

\begin{table}[!htbp]\centering
\caption{Example 1: Numerical results by the nonconforming method \eqref{discreteSPquadStrong}\label{tableexample1strong}}
\scalebox{1}[1]{
\begin{tabular}{*{10}{c@{\extracolsep{5pt}}@{\extracolsep{5pt}}}}\hline
$\varepsilon$&$h$ &$E_{L^2}(\pmb{u},\pmb{u}_h^0)$&  rate & $E_{\mathrm{curl}}(\pmb{u},\pmb{u}_h^0)$&  rate  & $E_{\mathrm{g}\mathrm{c}}(\pmb{u},\pmb{u}_h^0)$ &  rate& $E_{A}(\pmb{u},\pmb{u}_h^0)$ &  rate\\
\hline
  & 0.2165& 1.752e-01&      & 1.963e-01&      & 4.674e-01&      & 4.661e-01&      \\
  & 0.1732& 1.203e-01& 1.68& 1.366e-01& 1.63& 3.866e-01& 0.85& 3.855e-01& 0.85\\
  & 0.1443& 8.709e-02& 1.77& 9.965e-02& 1.73& 3.285e-01& 0.89& 3.275e-01& 0.89\\
1 & 0.1237& 6.571e-02& 1.83& 7.557e-02& 1.79& 2.850e-01& 0.92& 2.841e-01& 0.92\\
  & 0.1083& 5.122e-02& 1.87& 5.912e-02& 1.84& 2.514e-01& 0.94& 2.506e-01& 0.94\\
  & 0.0962& 4.099e-02& 1.89& 4.743e-02& 1.87& 2.247e-01& 0.95& 2.240e-01& 0.95\\
  & 0.0866& 3.351e-02& 1.91& 3.884e-02& 1.89& 2.031e-01& 0.96& 2.024e-01& 0.96\\
\hline
  & 0.2165& 6.055e-02&      & 9.042e-02&      & 5.678e-01&      & 1.126e-01&      \\
  & 0.1732& 3.934e-02& 1.93& 6.001e-02& 1.84& 4.394e-01& 1.15& 7.953e-02& 1.56\\
  & 0.1443& 2.752e-02& 1.96& 4.261e-02& 1.88& 3.578e-01& 1.13& 6.027e-02& 1.52\\
$10^{-2}$ & 0.1237& 2.029e-02& 1.98& 3.178e-02& 1.90& 3.022e-01& 1.10& 4.808e-02& 1.47\\
  & 0.1083& 1.556e-02& 1.99& 2.461e-02& 1.92& 2.620e-01& 1.07& 3.983e-02& 1.41\\
  & 0.0962& 1.230e-02& 1.99& 1.961e-02& 1.93& 2.315e-01& 1.05& 3.395e-02& 1.36\\
  & 0.0866& 9.970e-03& 2.00& 1.600e-02& 1.93& 2.076e-01& 1.04& 2.957e-02& 1.31\\
\hline
  & 0.2165& 5.796e-02&      & 8.535e-02&      & 7.499e-01&      & 8.515e-02&      \\
  & 0.1732& 3.789e-02& 1.91& 5.585e-02& 1.90& 5.903e-01& 1.07& 5.573e-02& 1.90\\
  & 0.1443& 2.665e-02& 1.93& 3.924e-02& 1.94& 4.835e-01& 1.09& 3.915e-02& 1.94\\
$10^{-5}$ & 0.1237& 1.974e-02& 1.95& 2.901e-02& 1.96& 4.086e-01& 1.09& 2.895e-02& 1.96\\
  & 0.1083& 1.520e-02& 1.96& 2.230e-02& 1.97& 3.535e-01& 1.08& 2.225e-02& 1.97\\
  & 0.0962& 1.206e-02& 1.97& 1.767e-02& 1.98& 3.115e-01& 1.07& 1.763e-02& 1.98\\
  & 0.0866& 9.792e-03& 1.97& 1.434e-02& 1.98& 2.784e-01& 1.07& 1.431e-02& 1.98\\
\hline
\end{tabular}}
\end{table}

\begin{table}[!htbp]\centering
\caption{Example 1: Numerical results by the nonconforming method \cite{zheng2011nonconforming}\label{tableexample1TNC}}
\scalebox{1}[1]{
\begin{tabular}{*{10}{c@{\extracolsep{5pt}}@{\extracolsep{5pt}}}}\hline
$\varepsilon$&$h$ &$E_{L^2}(\pmb{u},\hat{\pmb{u}}_h)$&  rate & $E_{\mathrm{curl}}(\pmb{u},\hat{\pmb{u}}_h)$&  rate  & $E_{\mathrm{g}\mathrm{c}}(\pmb{u},\hat{\pmb{u}}_h)$ &  rate& $E_{A}(\pmb{u},\hat{\pmb{u}}_h)$ &  rate\\
\hline
 &0.2165& 3.119e-01&      & 2.215e-01&      & 5.625e-01&      & 5.608e-01&      \\
 &0.1732& 2.053e-01& 1.87& 1.460e-01& 1.87& 4.584e-01& 0.92& 4.570e-01& 0.92\\
 &0.1443& 1.450e-01& 1.91& 1.031e-01& 1.91& 3.861e-01& 0.94& 3.849e-01& 0.94\\
1&0.1237& 1.076e-01& 1.93& 7.657e-02& 1.93& 3.331e-01& 0.96& 3.320e-01& 0.96\\
 &0.1083& 8.301e-02& 1.95& 5.904e-02& 1.95& 2.927e-01& 0.97& 2.918e-01& 0.97\\
 &0.0962& 6.592e-02& 1.96& 4.688e-02& 1.96& 2.610e-01& 0.97& 2.601e-01& 0.97\\
 &0.0866& 5.359e-02& 1.96& 3.811e-02& 1.97& 2.354e-01& 0.98& 2.346e-01& 0.98\\
\hline
 &0.2165& 1.407e+00&      & 9.889e-01&      & 3.275e+00&      & 1.062e+00&      \\
 &0.1732& 1.340e+00& 0.22& 9.452e-01& 0.20& 3.856e+00& -0.73& 1.050e+00& 0.05\\
 &0.1443& 1.253e+00& 0.37& 8.864e-01& 0.35& 4.285e+00& -0.58& 1.023e+00& 0.14\\
$10^{-2}$&0.1237& 1.159e+00& 0.51& 8.220e-01& 0.49& 4.586e+00& -0.44& 9.878e-01& 0.23\\
 &0.1083& 1.064e+00& 0.64& 7.574e-01& 0.61& 4.781e+00& -0.31& 9.491e-01& 0.30\\
 &0.0962& 9.737e-01& 0.76& 6.952e-01& 0.73& 4.894e+00& -0.20& 9.092e-01& 0.36\\
 &0.0866& 8.891e-01& 0.86& 6.369e-01& 0.83& 4.943e+00& -0.09& 8.697e-01& 0.42\\
\hline
 &0.2165& 1.604e+00&      & 1.133e+00&      & 3.903e+00&      & 1.138e+00&      \\
 &0.1732& 1.645e+00& -0.11& 1.164e+00& -0.12& 4.995e+00& -1.11& 1.169e+00& -0.12\\
 &0.1443& 1.670e+00& -0.08& 1.182e+00& -0.08& 6.076e+00& -1.07& 1.187e+00& -0.08\\
$10^{-5}$&0.1237& 1.686e+00& -0.06& 1.194e+00& -0.06& 7.149e+00& -1.06& 1.199e+00& -0.06\\
 &0.1083& 1.696e+00& -0.05& 1.201e+00& -0.05& 8.216e+00& -1.04& 1.206e+00& -0.05\\
 &0.0962& 1.703e+00& -0.04& 1.207e+00& -0.04& 9.280e+00& -1.03& 1.212e+00& -0.04\\
 &0.0866& 1.709e+00& -0.03& 1.210e+00& -0.03& 1.034e+01& -1.03& 1.215e+00& -0.03\\
\hline
\end{tabular}}
\end{table}

Table \ref{tableexample1weak} reports the convergence results for finite element method \eqref{discreteSPquad} for $\varepsilon=1$, $\varepsilon=10^{-2}$ and $\varepsilon=10^{-5}$, respectively. We see that the asymptotic convergence rate in energy norm is optimal, which confirms the theoretical analysis in Theorem \ref{theoremUUhWeak}. Since the exact solution has no layers, we also observe the optimal rates of convergence in other norms.

Table \ref{tableexample1strong} and \ref{tableexample1TNC} reports the convergence results for finite element method \eqref{discreteSPquadStrong} and total nonconforming method introduced in \cite{zheng2011nonconforming} for $\varepsilon=1$, $\varepsilon=10^{-2}$ and $\varepsilon=10^{-5}$, respectively. Here $\hat{\pmb{u}}_h$ is the solution of \eqref{discreteSPquadStrong} with the nonconforming space introduced in \cite{zheng2011nonconforming}. Again, since $\fanshu[2]{\pmb{u}}$ and $\fanshu[2]{\curl\pmb{u}}$ are independent of $\varepsilon$ for this example, we observe that the convergence rates in all norms for different values of $\varepsilon$ are optimal, which is consistent with Theorem \ref{theoremUUhStrong}. In contrast, we find that when $\varepsilon=1$, the convergence rates of total nonconforming method are optimal, while the convergence deteriorates as $\varepsilon$ approaches zero.

Although our method has more local degrees of freedom than that of \cite{zheng2011nonconforming} (28 for our method and 20 for \cite{zheng2011nonconforming}), our method is robust with respect to the perturbation parameter $\varepsilon$, while \cite{zheng2011nonconforming} is not robust with respect to $\varepsilon$. Therefore our method has a huge advantage in solving the singularly perturbed quad curl problem.

\subsection{Example 2}
In this example, we show that in some situations it is very advantageous to impose the second boundary condition weakly. This time we take the exact solution to the reduced problem \eqref{reducedproblem} to be
\begin{align*}
\overline{\pmb{u}}=\left(
          \begin{array}{c}
             0 \\
             - \frac{x^2\, y^2\, z\, {\left(x - 1\right)}^3\, {\left(y - 1\right)}^3\, {\left(z - 1\right)}^3}{4} - \frac{3\, x^2\, y^2\, z^2\, {\left(x - 1\right)}^3\, {\left(y - 1\right)}^3\, {\left(z - 1\right)}^2}{8} \\
          \frac{x^2\, y\, z^2\, {\left(x - 1\right)}^3\, {\left(y - 1\right)}^3\, {\left(z - 1\right)}^3}{4} + \frac{3\, x^2\, y^2\, z^2\, {\left(x - 1\right)}^3\, {\left(y - 1\right)}^2\, {\left(z - 1\right)}^3}{8} \\
          \end{array}
        \right),
\end{align*}
with the following data
\begin{align*}
\pmb{f}=(\nabla\times)^2\overline{\pmb{u}}&,\quad \sigma=20,\quad \varepsilon=10^{-6},\quad\Omega = (0,1)^3, \quad \Omega_0=(0.125,0.875)^3.
\end{align*}
The exact solution $\pmb{u}$ for \eqref{SPcurlLapcurlproblem} with given data is not known, so we can not compute $\pmb{u}-\pmb{u}_h$ directly. However, since the $\varepsilon$ we take is very small, according to \eqref{regularitySP2} and \eqref{regularitySP3}, we can estimate the error well by using the solution of the reduced problem. We point out that $\curl\pmb{u}$ has strong boundary layers. In fact, on the one hand, by the second boundary condition of \eqref{SPcurlLapcurlproblem} and simple computation we see that $\curl\pmb{u}|_{\partial\Omega}=0$ and $\curl\overline{\pmb{u}}|_{\partial\Omega}\neq0$. On the other hand, by \eqref{regularitySP2} we have $\fanshu[0]{\curl(\pmb{u}-\overline{\pmb{u}})}\rightarrow0$ as $\varepsilon\rightarrow0$, which means that $\curl\pmb{u}$ has strong boundary layers.

In Tables \ref{tableexample2weak} and \ref{tableexample2strong}, we compare the following relative errors
\begin{align*}
E_{L^2}(\overline{\pmb{u}},\pmb{u}_h),\quad E_{L^2}(\overline{\pmb{u}},\pmb{u}_h^0),\quad E_{\mathrm{curl}}(\overline{\pmb{u}},\pmb{u}_h),\quad E_{\mathrm{curl}}(\overline{\pmb{u}},\pmb{u}_h^0)
\end{align*}
and the corresponding convergence rates with weakly and strongly imposed boundary conditions. It can be seen from the tables that both relative errors converge at the optimal convergence rates for weakly imposed boundary conditions, while neither relative error converges optimally for strongly imposed boundary condition.

In Tables \ref{tableexample2weakOmega0} and \ref{tableexample2strongOmega0}, we compare the following relative errors
\begin{align*}
&E_{L^2,\Omega_0}(\overline{\pmb{u}},\pmb{u}_h),\quad E_{\mathrm{curl},\Omega_0}(\overline{\pmb{u}},\pmb{u}_h),\quad E_{\mathrm{g}\mathrm{c},\Omega_0}(\overline{\pmb{u}},\pmb{u}_h),\quad E_{\varepsilon,h,\Omega_0}(\overline{\pmb{u}},\pmb{u}_h)\\
&E_{L^2,\Omega_0}(\overline{\pmb{u}},\pmb{u}_h^0),\quad E_{\mathrm{curl},\Omega_0}(\overline{\pmb{u}},\pmb{u}_h^0),\quad E_{\mathrm{g}\mathrm{c},\Omega_0}(\overline{\pmb{u}},\pmb{u}_h^0),\quad E_{A,\Omega_0}(\overline{\pmb{u}},\pmb{u}_h^0),
\end{align*}
and the corresponding convergence rates with weakly and strongly imposed boundary conditions on a subdomain $\Omega_0$. Here the subscript $\Omega_0$ means that the related integral is calculated only on $\Omega_0$. Note that $\Omega_0$ is sufficiently far away from the layers. Therefore the exact solution is very smooth on $\Omega_0$. From the tables, we can observe that all relative errors converge at the optimal rates for the method with weakly imposed boundary condition. However, except $E_{\mathrm{g}\mathrm{c},\Omega_0}(\overline{\pmb{u}},\pmb{u}_h^0)$, no relative error converges at the optimal rate when the second boundary condition is imposed strongly. That is to say the presence of boundary layers may pollute the finite element method even far away from the boundary if the second boundary condition is imposed strongly. Similar phenomena are reported in the context of fourth order elliptic singular perturbation problem \cite{semper1992conforming,guzman2012spfourth}.

The above discussion illustrates the advantages of the weak treatment of the boundary conditions for the problem \eqref{SPcurlLapcurlproblem}.

\begin{table}[!htbp]\centering
\caption{Example 2: Rates of convergence for $\varepsilon=10^{-6}$ on $\Omega$ with weakly imposed boundary condition\label{tableexample2weak}}
\scalebox{1}[1]{
\begin{tabular}{*{5}{c@{\extracolsep{5pt}}@{\extracolsep{5pt}}}}\hline
$h$ &$E_{L^2}(\overline{\pmb{u}},\pmb{u}_h)$&  rate & $E_{\mathrm{curl}}(\overline{\pmb{u}},\pmb{u}_h)$&  rate \\
\hline
0.2165& 4.199e-02&      & 5.625e-02&      \\
0.1732& 2.745e-02& 1.90& 3.649e-02& 1.94\\
0.1443& 1.930e-02& 1.93& 2.553e-02& 1.96\\
0.1237& 1.429e-02& 1.95& 1.884e-02& 1.97\\
0.1083& 1.100e-02& 1.96& 1.447e-02& 1.98\\
0.0962& 8.720e-03& 1.97& 1.146e-02& 1.98\\
0.0866& 7.082e-03& 1.98& 9.294e-03& 1.99\\
\hline
\end{tabular}}
\end{table}

\begin{table}[!htbp]\centering
\caption{Example 2: Rates of convergence for $\varepsilon=10^{-6}$ on $\Omega$ with strongly imposed boundary condition\label{tableexample2strong}}
\scalebox{1}[1]{
\begin{tabular}{*{5}{c@{\extracolsep{5pt}}@{\extracolsep{5pt}}}}\hline
$h$ &$E_{L^2}(\overline{\pmb{u}},\pmb{u}_h^0)$&  rate & $E_{\mathrm{curl}}(\overline{\pmb{u}},\pmb{u}_h^0)$&  rate \\
\hline
0.2165& 9.971e-02&      & 2.780e-01&      \\
0.1732& 7.798e-02& 1.10& 2.476e-01& 0.52\\
0.1443& 6.404e-02& 1.08& 2.256e-01& 0.51\\
0.1237& 5.434e-02& 1.07& 2.086e-01& 0.51\\
0.1083& 4.720e-02& 1.06& 1.949e-01& 0.51\\
0.0962& 4.173e-02& 1.05& 1.836e-01& 0.51\\
0.0866& 3.739e-02& 1.04& 1.741e-01& 0.51\\
\hline
\end{tabular}}
\end{table}

\begin{table}[!htbp]\centering
\caption{Example 2: Rates of convergence for $\varepsilon=10^{-6}$ on $\Omega_0$ with weakly imposed boundary condition\label{tableexample2weakOmega0}}
\scalebox{1}[1]{
\begin{tabular}{*{9}{c@{\extracolsep{5pt}}@{\extracolsep{5pt}}}}\hline
$h$ &$E_{L^2,\Omega_0}(\overline{\pmb{u}},\pmb{u}_h)$&  rate & $E_{\mathrm{curl},\Omega_0}(\overline{\pmb{u}},\pmb{u}_h)$&  rate  & $E_{\mathrm{g}\mathrm{c},\Omega_0}(\overline{\pmb{u}},\pmb{u}_h)$ &  rate& $E_{\varepsilon,h,\Omega_0}(\overline{\pmb{u}},\pmb{u}_h)$ &  rate\\
\hline
0.2165& 2.175e-02&      & 3.685e-02&      & 3.420e-01&      & 3.669e-02&      \\ 0.1732& 1.405e-02& 1.96& 2.321e-02& 2.07& 2.631e-01& 1.18& 2.311e-02& 2.07\\
0.1443& 9.163e-03& 2.34& 1.566e-02& 2.16& 2.066e-01& 1.33& 1.560e-02& 2.16\\
0.1237& 6.650e-03& 2.08& 1.137e-02& 2.08& 1.741e-01& 1.11& 1.133e-02& 2.08\\
0.1083& 5.601e-03& 1.29& 9.413e-03& 1.42& 1.652e-01& 0.39& 9.373e-03& 1.42\\
0.0962& 4.417e-03& 2.02& 7.333e-03& 2.12& 1.443e-01& 1.15& 7.302e-03& 2.12\\
0.0866& 3.426e-03& 2.41& 5.819e-03& 2.19& 1.257e-01& 1.31& 5.794e-03& 2.20\\
\hline
\end{tabular}}
\end{table}

\begin{table}[!htbp]\centering
\caption{Example 2: Rates of convergence for $\varepsilon=10^{-6}$ on $\Omega_0$ with strongly imposed boundary condition\label{tableexample2strongOmega0}}
\scalebox{1}[1]{
\begin{tabular}{*{9}{c@{\extracolsep{5pt}}@{\extracolsep{5pt}}}}\hline
$h$ &$E_{L^2,\Omega_0}(\overline{\pmb{u}},\pmb{u}_h^0)$&  rate & $E_{\mathrm{curl},\Omega_0}(\overline{\pmb{u}},\pmb{u}_h^0)$&  rate  & $E_{\mathrm{g}\mathrm{c},\Omega_0}(\overline{\pmb{u}},\pmb{u}_h^0)$ &  rate& $E_{A,\Omega_0}(\overline{\pmb{u}},\pmb{u}_h^0)$ &  rate\\
\hline
0.2165& 5.741e-02&      & 5.021e-02&      & 3.779e-01&      & 5.031e-02&      \\
0.1732& 4.333e-02& 1.26& 3.348e-02& 1.82& 2.776e-01& 1.38& 3.362e-02& 1.81\\
0.1443& 3.474e-02& 1.21& 2.475e-02& 1.66& 2.098e-01& 1.53& 2.491e-02& 1.65\\
0.1237& 2.887e-02& 1.20& 1.956e-02& 1.53& 1.748e-01& 1.18& 1.971e-02& 1.52\\
0.1083& 2.735e-02& 0.40& 1.798e-02& 0.63& 1.660e-01& 0.39& 1.813e-02& 0.62\\
0.0962& 2.373e-02& 1.21& 1.515e-02& 1.45& 1.446e-01& 1.17& 1.529e-02& 1.45\\
0.0866& 2.093e-02& 1.19& 1.302e-02& 1.43& 1.259e-01& 1.31& 1.316e-02& 1.43\\
\hline
\end{tabular}}
\end{table}

\section*{Acknowledgments}
The authors would like to thank anonymous referees for their valuable comments. This work is supported in part by the National Natural Science Foundation of China grants NSFC 11871092, 12131005, and NSAF U1930402.
\bibliographystyle{siam}
\bibliography{SPQuadCurl}
\end{document}